\newtheorem{thm}{Theorem}[section]
\newtheorem{lem}[thm]{Lemma}
\newtheorem{pro}[thm]{Proposition}
\newtheorem{rmk}[thm]{Remark}
\newtheorem{defi}[thm]{Definition}
\numberwithin{equation}{section}
\newcommand{\be }{\begin{equation}}
\newcommand{\ee }{\end{equation}}
\newcommand{\br}[1]{   [ \cdot,    \cdot  ]   }
\newcommand {\emptycomment}[1]{}
\def\<{\langle}
\def\>{\rangle}
\begin{document}
\title{\sf Nijenhuis operators and twisted $\mathcal{O}$-operators on Nambu-Poisson algebras}
\date{}
\author{ \bf Apurba Das$^{1}$\footnote{E-mail: apurbadas348@gmail.com (Corresponding author)}~, \;\;
   Fattoum Harrathi$^{2}$\footnote{E-mail: harrathifattoum285@gmail.com } \;and \; Sami Mabrouk$^{3}$\footnote{E-mail: mabrouksami00@yahoo.fr, sami.mabrouk@fsgf.u-gafsa.tn }
\\ \\
$^{1}${\small Department of Mathematics, Indian Institute of Technology, Kharagpur 721302, West Bengal, India}\\$^{2}${\small University of Gafsa, preparatory Institute for Engineering Studies in Gafsa, 2112 Gafsa, Tunisia}\\$^{3}${\small  University of Gafsa, Faculty of Sciences Gafsa, 2112 Gafsa, Tunisia } 
}

\maketitle

\begin{abstract} 
A ternary Nambu-Poisson algebra (which we call a Nambu-Poisson algebra in the paper) is the underlying algebraic structure of Nambu-Poisson manifolds of order $3$ that appeared in the generalized Hamiltonian mechanics. First, we consider the 2nd cohomology group of a Nambu-Poisson algebra with coefficients in a given representation. Next, we discuss suitable linear deformations of a Nambu-Poisson algebra and show that any such trivial deformation yields a Nijenhuis operator on it. To understand the deformed Nambu-Poisson algebra obtained from a Nijenhuis operator, we introduce a new algebraic structure, which we name NS-Nambu-Poisson algebras. Finally, we consider $\mathcal{O}$-operators twisted by $2$-cocycles and find their close relationships with NS-Nambu-Poisson algebras.
\end{abstract} 

\noindent\textbf{Keywords:} $3$-Lie algebra, Nambu-Poisson algebra, Nijenhuis operator, deformation,  Rota-Baxter operator.

\noindent{\textbf{MSC (2020):}} 17B63, 17A40, 17B38, 17B40.

\tableofcontents


\section{Introduction}

\subsection{Nambu-Poisson algebras} In 1973, Y. Nambu \cite{N} introduced a generalization of classical Hamiltonian mechanics based on $n$-ary brackets replacing the conventional binary Poisson brackets. The evolution of a system in this generalized Hamiltonian mechanics is governed by the flows of vector fields obtained from $(n-1)$ Hamiltonian functions \cite{T}. Nambu himself proposed an example of such an $n$-ary bracket defined on the space $C^\infty (\mathbb{R}^N)$ of smooth functions on $\mathbb{R}^N$ $(N \geq n):$
\begin{equation}\label{nambu-classical}
\{f_1, \dots, f_n\} = \det\left( \frac{\partial f_i}{\partial x_j} \right)_{i,j=1}^n,
\end{equation}
for $f_1, \ldots, f_n \in C^\infty (\mathbb{R}^N)$. This $n$-ary bracket satisfies the properties analogous to those of the classical Poisson bracket. Namely, the bracket (\ref{nambu-classical}) is skew-symmetric, satisfies the Leibniz rule (with respect to the product of $ C^\infty (\mathbb{R}^N)$) and the following fundamental identity:
\begin{align}\label{fundament}
    \{ f_1, \ldots, f_{n-1} , \{ g_1, \ldots, g_n \} \} = \sum_{i=1}^n \{ g_1, \ldots, g_{i-1}, \{ f_1, \ldots, f_{n-1}, g_i \}, g_{i+1}, \ldots, g_n \},
\end{align}
for all $f_1, \ldots, f_{n-1}, g_1, \ldots, g_n \in C^\infty (\mathbb{R}^N)$, that makes the space $\mathbb{R}^N$ into a Nambu-Poisson manifold of order $n$ \cite{T}. The identity (\ref{fundament}) is an $n$-ary generalization of the classical Jacobi identity, and it is independently studied by Filippov \cite{filippov}. An algebraic abstraction of the structures of $C^\infty (\mathbb{R}^N)$ provided above is known as the Nambu-Poisson algebra of order $n$. Explicitly, a Nambu-Poisson algebra of order $n$ is a commutative associative algebra equipped with an $n$-ary skew-symmetric bracket that satisfies the Leibniz rule and the fundamental identity. It turns out that a Nambu-Poisson algebra of order $n=2$ is nothing but a Poisson algebra. In this paper, we shall focus only on Nambu-Poisson algebras of order $n=3$, which we simply call Nambu-Poisson algebras for our convenience.

\subsection{Nijenhuis operators, NS-algebras and twisted $\mathcal{O}$-operators} Nijenhuis operators were originally investigated by Dorfman \cite{Dorfman} through the lens of Lie algebra deformations. More precisely, any trivial linear deformation of a Lie algebra induces a Nijenhuis operator on it. Conversely, any Nijenhuis operator on a Lie algebra generates a trivial linear deformation. Nijenhuis operators also play a significant role in the theory of integrable systems and nonlinear evolution equations \cite{Dorfman}. Subsequently, Nijenhuis operators and their intimations with trivial linear deformations have been extended to the context of associative algebras, Leibniz algebras, $3$-Lie algebras and to many other algebraic settings \cite{Carinena,fig,guo-lei,LiuShengZhouBai,leroux,ospel}. In \cite{uchino,guo-lei,leroux}, it has been observed that Nijenhuis operators are also useful in constructing new algebraic structures. More precisely, Uchino \cite{uchino} considered NS-algebras as the structure behind Nijenhuis operators on associative algebras. Later, NS-Lie algebras \cite{das} and NS-$3$-Lie algebras \cite{ChtiouiHajjajiMabroukMakhlouf,Hou-Sheng-NSLie} are also investigated while studying Nijenhuis operators on Lie algebras and $3$-Lie algebras, respectively. In \cite{BaishyaDas}, the authors have recently studied Nijenhuis operators on Poisson algebras and investigated the deformed structures. In particular, they introduced NS-Poisson algebras as the underlying structure behind Nijenhuis operators.

In recent years, Rota-Baxter operators and their variants have attracted significant attention due to their connections with the splitting of algebras and the discovery of various pre-algebras \cite{li-guo,CMM,das,das-jhrs,HarrathiSendi,uchino}. Twisted Rota-Baxter operators (and more generally, twisted $\mathcal{O}$-operators) were first considered by Uchino \cite{uchino} in the setting of associative algebras. Since then, these operators have been extensively studied \cite{guo-lei,das-jhrs} and generalized in various algebraic contexts, including Lie algebras \cite{das}, $3$-Lie algebras \cite{ChtiouiHajjajiMabroukMakhlouf,ChtiouiHajjajiMabroukMakhlouf2,Hou-Sheng-NSLie} and Poisson algebras \cite{BaishyaDas}. While NS-algebras arise naturally from Nijenhuis operators, they are more intimately associated with twisted $\mathcal{O}$-operators on associative algebras \cite{das-jhrs,uchino}. More precisely, any twisted $\mathcal{O}$-operator induces an NS-algebra structure, and any NS-algebra arises in this way. In the same spirit, it has been observed that NS-Lie algebras, NS-$3$-Lie algebras and NS-Poisson algebras are closely related to twisted $\mathcal{O}$-operators on Lie algebras, $3$-Lie algebras and Poisson algebras, respectively. A Reynolds operator is a particular case of a twisted Rota-Baxter operator and hence related to NS-algebras \cite{uchino,das}.

\subsection{Overview of the main results} This paper aims to extend the theory of Nijenhuis operators, NS-algebras and twisted $\mathcal{O}$-operators to the framework of Nambu-Poisson algebras. We begin by introducing the second cohomology group of a Nambu-Poisson algebra with coefficients in a representation. Next, we consider $(1,2)$-linear deformations of a Nambu-Poisson algebra. By an $(1,2)$-linear deformation of a Nambu-Poisson algebra, we shall mean a deformation allowing degree $1$ linear deformation of the underlying commutative associative product and degree $2$ linear deformation of the Lie bracket. In particular, we show that a trivial $(1,2)$-linear deformation of a Nambu-Poisson algebra yields a Nijenhuis operator on it. Subsequently, we show that a Nijenhuis operator on a Nambu-Poisson algebra deforms the given Nambu-Poisson structure to a new one. To better understand this deformed Nambu-Poisson structure obtained from a Nijenhuis operator, we introduce the notion of NS-Nambu-Poisson algebras (that can be regarded as the ternary version of NS-Poisson algebras) and show that this new algebraic structure splits Nambu-Poisson algebras. Subsequently, we consider twisted $\mathcal{O}$-operators on a Nambu-Poisson algebra with respect to a given representation and show that such operators naturally induce NS-Nambu-Poisson algebras. Conversely, we observe that any NS-Nambu-Poisson algebra arises in this way.

\subsection{Organization} The paper is organized as follows. In Section \ref{section-2}, we recall some necessary background on Nambu-Poisson algebras and their representations. In Section \ref{section-3}, we consider $(1,2)$-linear deformations of a Nambu-Poisson algebra and show that trivial $(1,2)$-linear deformations induce Nijenhuis operators on the Nambu-Poisson algebra. We introduce and study the notion of NS-Nambu-Poisson algebras in Section \ref{section-4}. NS-Nambu-Poisson algebras are a splitting of Nambu-Poisson algebras. Finally, in Section \ref{section-5}, we first define the second cohomology group of a Nambu-Poisson algebra with coefficients in a given representation. Subsequently, we consider twisted $\mathcal{O}$-operators on Nambu-Poisson algebras and find their close relationships with NS-Nambu-Poisson algebras.

All vector spaces, linear and multilinear maps are over a field $k$ of characteristic $0$.

\section{Some background on Nambu-Poisson algebras }\label{section-2} 
In this section, we recall some necessary definitions regarding Nambu-Poisson algebras and their representations \cite{HarrathiSendi,kac,T,MakhloufAmri, Abramov}. To do this, we begin with some basics of $3$-Lie algebras \cite{fig,filippov}.

\begin{defi}
    A {\bf $3$-Lie algebra} is a pair $(\mathcal{L}, \{ ~, ~, ~ \})$ consisting of a vector space $\mathcal{L}$ endowed with a skew-symmetric trilinear operation $\{ ~, ~, ~ \} : \mathcal{L} \times \mathcal{L} \times \mathcal{L} \rightarrow \mathcal{L}$ satisfying the following fundamental identity:
    \begin{align}\label{funda-iden}
        \{ a, b, \{ c,d , e \} \} = \{ \{ a, b, c \} , d, e \} + \{ c, \{ a, b, d \}, e\} + \{ c, d, \{ a, b, e \} \}, \text{ for } a, b, c, d, e \in \mathcal{L}.
    \end{align}
\end{defi}

Let $(\mathcal{L}, \{ ~, ~, ~ \})$ and $(\mathcal{L}', \{ ~, ~, ~ \}')$ be two $3$-Lie algebras. A {\bf homomorphism} of $3$-Lie algebras from $\mathcal{L}$ to $\mathcal{L}'$ is a linear map $f : \mathcal{L} \rightarrow \mathcal{L}'$ satisfying $f (\{ a, b, c \}) = \{ f(a), f(b) , f(c) \}'$, for all $a, b, c \in \mathcal{L}$.

\begin{defi}
    Let $(\mathcal{L}, \{ ~, ~, ~ \})$ be a $3$-Lie algebra. A {\bf representation} of this $3$-Lie algebra is given by a pair $(V; \rho)$ of a vector space $V$ with a skew-symmetric bilinear map $\rho: \mathcal{L} \times \mathcal{L} \rightarrow \mathrm{End}(V)$ such that for all $a, b, c, d \in \mathcal{L}$, the following identities are hold:
    \begin{align}
        \rho (a, b) \rho (c, d) - \rho (c, d) \rho (a, b) =~& \rho (\{ a, b, c \}, d) + \rho (c, \{ a, b, d \}), \label{repn-1}\\
        \rho (\{ a, b, c \}, d) =~& \rho (a, b) \rho (c, d) + \rho (b, c) \rho (a, d) + \rho (c, a) \rho (b, d).
    \end{align}
\end{defi}

Let $(\mathcal{L}, \{ ~, ~, ~ \})$ be a $3$-Lie algebra. Then it follows that the pair $(\mathcal{L}; \rho_{\mathrm{ad}})$ is a representation, where the map $\rho_\mathrm{ad} : \mathcal{L} \times \mathcal{L} \rightarrow \mathrm{End} (\mathcal{L})$ is given by $\rho_\mathrm{ad} (a, b ) (c) = \{ a, b, c \}$, for $a, b, c \in \mathcal{L}$. This is called the {\em adjoint representation}.

The following result is a characterization of representations of a $3$-Lie algebra.

\begin{pro}\label{3-lie-rep}
    Let $(\mathcal{L}, \{ ~, ~, ~ \})$ be a $3$-Lie algebra. Suppose $V$ is a vector space with a skew-symmetric bilinear map $\rho: \mathcal{L} \times \mathcal{L} \rightarrow \mathrm{End}(V)$. Then $(V; \rho)$ is a representation of the given $3$-Lie algebra if and only if the operation
    \begin{align*}
        \{ (a, u), (b, v), (c, w)  \}_\ltimes := ( \{ a, b, c \} ~ \! , ~ \! \rho (a, b) w + \rho (b, c) u + \rho (c, a) v),
    \end{align*}
    for $(a, u), (b, v), (c, w) \in \mathcal{L} \oplus V$, makes $(\mathcal{L} \oplus V, \{ ~, ~, ~ \}_\ltimes)$ into a $3$-Lie algebra.
\end{pro}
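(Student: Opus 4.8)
The plan is to prove the equivalence by directly unpacking the fundamental identity \eqref{funda-iden} for the trilinear operation $\{~,~,~\}_\ltimes$ on $\mathcal{L}\oplus V$ and separating it into its $\mathcal{L}$-component and its $V$-component.

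I would first note that skew-symmetry of $\{~,~,~\}_\ltimes$ is automatic: the $\mathcal{L}$-part $\{a,b,c\}$ is skew-symmetric by hypothesis, while the $V$-part $\rho(a,b)w+\rho(b,c)u+\rho(c,a)v$ is the cyclic sum which, together with the skew-symmetry of $\rho$, is totally skew-symmetric in the arguments $(a,u),(b,v),(c,w)$. Thus only the fundamental identity needs to be analyzed. For $(a,u),(b,v),(c,w),(d,x),(e,y)\in\mathcal{L}\oplus V$, I would expand both sides of
\begin{align*}
& \{(a,u),(b,v),\{(c,w),(d,x),(e,y)\}_\ltimes\}_\ltimes \\
& \quad = \{\{(a,u),(b,v),(c,w)\}_\ltimes,(d,x),(e,y)\}_\ltimes + \{(c,w),\{(a,u),(b,v),(d,x)\}_\ltimes,(e,y)\}_\ltimes \\
& \qquad + \{(c,w),(d,x),\{(a,u),(b,v),(e,y)\}_\ltimes\}_\ltimes
\end{align*}
using only the definition of $\{~,~,~\}_\ltimes$, and then project onto the two summands of $\mathcal{L}\oplus V$. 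On the $\mathcal{L}$-component, both sides collapse to the identity \eqref{funda-iden} for $(\mathcal{L},\{~,~,~\})$, which holds by assumption and imposes no new constraint. On the $V$-component, one is left with a single identity between elements of $\mathrm{End}(V)$ applied to the five vectors $u,v,w,x,y$; since this must hold for arbitrary $u,v,w,x,y\in V$ independently, it is equivalent to the equality of the operator coefficient of each of $u,v,w,x,y$ on the two sides.

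Finally, I would compare these coefficient operators one variable at a time. The coefficients of $w$, of $x$, and of $y$ each reproduce identity \eqref{repn-1} (with the letters $a,b,c,d,e$ suitably permuted), while the coefficients of $u$ and of $v$ each reproduce the second defining identity of a representation, after using the skew-symmetry of $\rho$ to rewrite terms such as $\rho(b,\{c,d,e\})$ as $-\rho(\{c,d,e\},b)$ and $\rho(b,e)$ as $-\rho(e,b)$. Reading these equalities in the forward direction shows that if $(V;\rho)$ is a representation then $\{~,~,~\}_\ltimes$ satisfies the fundamental identity, and reading them backwards (together with the specialization argument above) shows the converse. Hence $(\mathcal{L}\oplus V,\{~,~,~\}_\ltimes)$ is a $3$-Lie algebra if and only if $\rho$ satisfies both representation axioms, which is the assertion. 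I expect no conceptual difficulty here; the only real work is to organize the expansion of the fundamental identity carefully enough that the five arguments and the compositions of the $\rho$'s are bookkept without error.
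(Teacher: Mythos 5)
Your proposal is correct: the paper states this proposition without proof (as a standard characterization), and your direct verification --- expanding the fundamental identity for $\{~,~,~\}_\ltimes$, projecting onto the $\mathcal{L}$- and $V$-components, and matching the operator coefficients of $u,v,w,x,y$ separately --- is exactly the routine argument being taken for granted. The coefficient bookkeeping works out as you predict: the coefficients of $w$, $x$, $y$ each yield identity \eqref{repn-1} (in permuted letters), the coefficients of $u$ and $v$ yield the second representation axiom after using skew-symmetry of $\rho$, and the converse follows by specializing all but one of $u,v,w,x,y$ to zero.
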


\begin{defi}
    A {\bf ternary Nambu-Poisson algebra} (or simply a {\bf Nambu-Poisson algebra}) is a triple $(A, ~ \! \cdot ~ \! , \{ ~, ~, ~ \})$ consisting of a commutative associative algebra $(A, ~ \! \cdot ~ \!)$ and a $3$-Lie algebra $(A, \{ ~, ~ , ~ \})$ both defined on a same vector space satisfying the following Leibniz rule:
    \begin{align}\label{leibniz-rule-added}
        \{ a, b, c \cdot d \} = \{ a, b, c \} \cdot d ~ \! + ~ \! c \cdot \{ a , b, d \}, \text{ for } a, b, c, d \in A.
    \end{align}
\end{defi}

Let $(A, ~ \! \cdot ~ \! , \{ ~, ~, ~ \})$ and $(A', ~ \! \cdot' ~ \! , \{ ~, ~, ~ \}')$ be two Nambu-Poisson algebras. A {\bf homomorphism} from $A$ to $A'$ is a linear map $f: A \rightarrow A'$ that is both a commutative associative algebra homomorphism and a $3$-Lie algebra homomorphism.


\begin{defi}
    Let $(A, ~ \! \cdot ~ \! , \{ ~, ~, ~ \})$ be a Nambu-Poisson algebra. A {\bf representation} of it, consists of a triple $(V; \mu, \rho)$ of a vector space $V$ with a linear map $\mu : A \rightarrow \mathrm{End}(V)$ and a skew-symmetric bilinear map $\rho: A \times A \rightarrow \mathrm{End}(V)$ such that
    \begin{itemize}
        \item $(V ; \mu)$ is a representation of the commutative associative algebra $(A, ~ \! \cdot ~ \!)$, i.e. $\mu(a \cdot b) = \mu (a) \mu(b)$,
        \item $(V; \rho)$ is a representation of the $3$-Lie algebra $(A, \{ ~, ~, ~\})$,
        \item for all $a, b, c \in A$,
        \begin{align}\label{np3-rep}
            \mu (\{ a, b, c \}) = \rho (a, b) \mu(c) - \mu(c) \rho (a, b) \quad \text{ and } \quad \rho (a, b \cdot c) = \mu(b) \rho (a, c) + \mu(c) \rho (a, b).
        \end{align}
    \end{itemize}
\end{defi}

Let $(A, ~ \! \cdot ~ \! , \{ ~, ~, ~ \})$ be any Nambu-Poisson algebra. We define a linear map $\mu_\mathrm{ad} : A \rightarrow \mathrm{End}(A)$ and a skew-symmetric bilinear map $\rho_\mathrm{ad} :  A \times A \rightarrow \mathrm{End}(A)$ by
\begin{align*}
    (\mu_\mathrm{ad} )(a) b = a \cdot b ~~~\text{ and } ~~~ (\rho_\mathrm{ad}) (a, b) c = \{ a, b, c \}, \text{ for } a, b, c \in A.
\end{align*}
Then the triple $(A; \mu_\mathrm{ad}, \rho_\mathrm{ad})$ is a representation, called the {\em adjoint representation} of the Nambu-Poisson algebra.

Similar to Proposition \ref{3-lie-rep}, here we have the following characterization of a representation.

\begin{pro}
    Let $(A, ~ \! \cdot ~ \! , \{ ~, ~, ~ \})$ be a Nambu-Poisson algebra. Suppose $V$ is a vector space endowed with a linear map $\mu : A \rightarrow \mathrm{End}(V)$ and a skew-symmetric bilinear map $\rho :  A \times A  \rightarrow \mathrm{End}(V)$. Then $(V; \mu, \rho)$ is a representation of the given Nambu-Poisson algebra if and only if the operations
    \begin{align*}
        (a, u) \cdot_\ltimes (b, v) :=~& (a \cdot b ~ \! , ~ \! \mu(a) v + \mu (b) u), \\
        \{ (a, u), (b, v), (c, w) \}_\ltimes:=~&  (\{ a, b, c \} ~\! , ~ \! \rho(a, b) w + \rho (b, c) u + \rho (c, a) v),
    \end{align*}
    for $(a, u), (b, v), (c, w) \in A \oplus V$, makes the triple $(A \oplus V, ~\! \cdot_\ltimes ~\!, \{ ~, ~, ~\}_\ltimes)$ into a Nambu-Poisson algebra.
\end{pro}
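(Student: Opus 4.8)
The plan is to split the Nambu-Poisson axioms on the direct sum $A \oplus V$ into their three constituent pieces --- commutative associativity of $\cdot_\ltimes$, the fundamental identity for $\{~,~,~\}_\ltimes$, and the Leibniz rule relating the two --- and to match each piece with the corresponding clause in the definition of a representation.

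First I would dispose of the two easy pieces. On the one hand, $(A \oplus V, \cdot_\ltimes)$ is a commutative associative algebra if and only if $(V; \mu)$ is a representation of $(A, \cdot)$: this is the classical semidirect product construction, and it follows by a short direct check, since commutativity of $\cdot_\ltimes$ is immediate and associativity, after comparing $V$-components and using commutativity of $(A, \cdot)$, reduces exactly to $\mu(a \cdot b) = \mu(a)\mu(b)$. On the other hand, by Proposition \ref{3-lie-rep}, the trilinear bracket $\{~,~,~\}_\ltimes$ endows $A \oplus V$ with a $3$-Lie algebra structure if and only if $(V; \rho)$ is a representation of the $3$-Lie algebra $(A, \{~,~,~\})$. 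Hence the whole statement reduces to showing that, granted these two structures, the Leibniz rule \eqref{leibniz-rule-added} for $(A \oplus V, \cdot_\ltimes, \{~,~,~\}_\ltimes)$ is equivalent to the pair of compatibility conditions \eqref{np3-rep}.

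For this last and main step, I would expand both sides of
\[
\{(a,u),(b,v),(c,w) \cdot_\ltimes (d,x)\}_\ltimes = \{(a,u),(b,v),(c,w)\}_\ltimes \cdot_\ltimes (d,x) + (c,w) \cdot_\ltimes \{(a,u),(b,v),(d,x)\}_\ltimes
\]
for arbitrary elements $(a,u),(b,v),(c,w),(d,x) \in A \oplus V$, using only the definitions of the two operations. The $A$-components of the two sides agree automatically, precisely because the Leibniz rule \eqref{leibniz-rule-added} already holds in $A$. Comparing $V$-components and isolating the coefficients of the independent vectors $x$, $w$, $u$ and $v$, I expect: the coefficients of $x$ and of $w$ each give $\mu(\{a,b,c\}) = \rho(a,b)\mu(c) - \mu(c)\rho(a,b)$, the first identity of \eqref{np3-rep}; the coefficient of $u$ gives $\rho(b, c \cdot d) = \mu(c)\rho(b,d) + \mu(d)\rho(b,c)$; and the coefficient of $v$ gives $\rho(c \cdot d, a) = \mu(d)\rho(c,a) + \mu(c)\rho(d,a)$, which after applying the skew-symmetry of $\rho$ becomes $\rho(a, c \cdot d) = \mu(c)\rho(a,d) + \mu(d)\rho(a,c)$ --- the last two being instances of the second identity of \eqref{np3-rep}. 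Conversely, feeding the identities \eqref{np3-rep} back into the same expansion shows the Leibniz rule holds on $A \oplus V$, which completes the equivalence.

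I do not anticipate a genuine obstacle: the argument is a bookkeeping exercise once the two \emph{easy pieces} have been separated off. The only spot that needs a moment's care is recognizing the coefficient of $v$ as the second identity of \eqref{np3-rep}, which requires invoking the skew-symmetry of $\rho$; everything else is mechanical, and the computation in the converse direction is just the same expansion read backwards.
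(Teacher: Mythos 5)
Your proposal is correct and is exactly the direct verification the paper intends (the proposition is stated without proof, as the routine analogue of Proposition \ref{3-lie-rep}): the first two pieces reduce to the standard semidirect products for commutative associative and $3$-Lie algebras, and the $V$-component of the Leibniz rule on $A \oplus V$, read off against the coefficients of $x$, $w$, $u$, $v$, is precisely the pair of identities in (\ref{np3-rep}). The coefficient computations you describe all check out, including the sign bookkeeping via skew-symmetry of $\rho$ in the $v$-term.
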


Note that the fundamental identity (\ref{funda-iden}) given in the definition of a $3$-Lie algebra can be regarded as the ternary version of the classical Jacobi identity. By fixing one coordinate of the $3$-Lie bracket, one obtains a bilinear skew-symmetric bracket that turns out to satisfy the Jacobi identity. More generally, we have the following result.

\begin{pro}\label{fixing-co}
    Let $(A, ~ \! \cdot ~ \! , \{ ~, ~, ~ \})$ be a Nambu-Poisson algebra. For any fixed $x_0 \in A$, we define a bilinear skew-symmetric bracket $\{ ~, ~ \}_{x_0} : A \times A \rightarrow A$ by $\{ a, b \}_{x_0} := \{ x_0, a, b \}$, for all $a, b \in A$. Then $(A, ~\! \cdot ~\! , \{ ~, ~\}_{x_0})$ is a Poisson algebra.

    Additionally, let $(V; \mu, \rho)$ be a representation of the given Nambu-Poisson algebra. We define a linear map $\rho_{x_0} : A \rightarrow \mathrm{End}(V)$ by $\rho_{x_0} (a) (u) = \rho (x_0, a) u$, for $a \in A$ and $u \in V$. Then $(V; \mu, \rho_{x_0})$ is a representation of the Poisson algebra $(A, ~\! \cdot ~\! , \{ ~, ~\}_{x_0})$.
\end{pro}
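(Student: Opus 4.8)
The plan is to verify the three required conditions for $(A, \cdot, \{~,~\}_{x_0})$ to be a Poisson algebra — namely that $\{~,~\}_{x_0}$ is a Lie bracket (Jacobi identity), that the Leibniz rule holds, and that the commutative associative product is unchanged — and then separately check the two representation axioms for $(V; \mu, \rho_{x_0})$. Skew-symmetry of $\{~,~\}_{x_0}$ is immediate from the skew-symmetry of the ternary bracket in its last two slots. For the Jacobi identity, I would expand $\{\{a,b\}_{x_0}, c\}_{x_0} = \{x_0, \{x_0, a, b\}, c\}$ and apply the fundamental identity (\ref{funda-iden}) with the ordered pair $(x_0, c)$ in the first two slots acting on $\{x_0, a, b\}$; this rewrites the term as $\{\{x_0,c,x_0\},a,b\} + \{x_0,\{x_0,c,a\},b\} + \{x_0,a,\{x_0,c,b\}\}$. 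The first summand vanishes because $\{x_0, c, x_0\} = 0$ by skew-symmetry, and the remaining two summands are exactly (up to sign and relabeling) the other two cyclic terms $\{\{b,c\}_{x_0}, a\}_{x_0}$ and $\{a, \{b,c\}_{x_0}\}_{x_0}$ — so the fundamental identity collapses directly to the Jacobi identity for $\{~,~\}_{x_0}$. This is the only computational step with any content, and it is short.

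For the Leibniz rule of the Poisson algebra, I would simply specialize (\ref{leibniz-rule-added}) by setting the first argument of the ternary bracket to $x_0$: $\{a, b\cdot c\}_{x_0} = \{x_0, a, b \cdot c\} = \{x_0, a, b\} \cdot c + b \cdot \{x_0, a, c\} = \{a,b\}_{x_0} \cdot c + b \cdot \{a,c\}_{x_0}$. That the product $\cdot$ remains commutative and associative is trivial since it is untouched. Hence $(A, \cdot, \{~,~\}_{x_0})$ is a Poisson algebra.

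For the representation claim, I need $(V;\mu)$ to be a representation of $(A,\cdot)$ — which it already is by hypothesis — and $(V;\rho_{x_0})$ to be a Lie-algebra representation of $(A, \{~,~\}_{x_0})$, plus the two Poisson compatibility conditions. The Lie-representation condition $\rho_{x_0}(\{a,b\}_{x_0}) = \rho_{x_0}(a)\rho_{x_0}(b) - \rho_{x_0}(b)\rho_{x_0}(a)$ unwinds to $\rho(x_0, \{x_0,a,b\}) = \rho(x_0,a)\rho(x_0,b) - \rho(x_0,b)\rho(x_0,a)$, which follows from the $3$-Lie representation axiom (\ref{repn-1}) applied with the ordered pair $(x_0, a)$ and $(x_0, b)$: the left side of (\ref{repn-1}) is the commutator, and the right side is $\rho(\{x_0,a,x_0\}, b) + \rho(x_0, \{x_0,a,b\})$, whose first term vanishes by skew-symmetry — exactly what is needed. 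The two Poisson compatibility conditions for $(V;\mu,\rho_{x_0})$, namely $\mu(\{a,b\}_{x_0}) = \rho_{x_0}(a)\mu(b) - \mu(b)\rho_{x_0}(a)$ and $\rho_{x_0}(b\cdot c) = \mu(b)\rho_{x_0}(c) + \mu(c)\rho_{x_0}(b)$, follow immediately by substituting $x_0$ into the two identities in (\ref{np3-rep}). So every piece reduces to a one-line specialization of an already-established identity, the only nontrivial observation being the repeated use of $\{x_0, \cdot, x_0\} = 0$; I do not anticipate any real obstacle. If I wanted to be slick, I could instead invoke the general principle that restricting a $3$-Lie algebra along a fixed element gives a Lie algebra and cite that for the bracket part, but writing out the direct computation is cleaner here since the Poisson and representation pieces need it anyway.
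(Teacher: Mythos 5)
Your proposal is correct and follows essentially the same route as the paper: every claim reduces to specializing the fundamental identity, the Leibniz rule, (\ref{repn-1}) and (\ref{np3-rep}) at $x_0$, with the term $\{x_0,\cdot,x_0\}=0$ killing the extra summand each time. The only cosmetic difference is that you expand $\{\{a,b\}_{x_0},c\}_{x_0}$ and land on the cyclic form of the Jacobi identity, whereas the paper expands $\{a,\{b,c\}_{x_0}\}_{x_0}$ and gets the derivation form directly; both work (though note the two ``other cyclic terms'' you name, $\{\{b,c\}_{x_0},a\}_{x_0}$ and $\{a,\{b,c\}_{x_0}\}_{x_0}$, are negatives of each other --- the second should read $\{\{c,a\}_{x_0},b\}_{x_0}$).
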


\begin{proof}
   For any $a, b, c \in A$, we have
   \begin{align*}
       \{ a, \{ b, c \}_{x_0} \}_{x_0} = \{ x_0, a, \{ x_0, b, c \} \} =~& \{ \{ x_0, a, x_0 \} , b, c \} + \{ x_0 , \{ x_0 , a, b \}, c \} + \{ x_0, b, \{ x_0, a, c \} \} \\
       =~& \{ \{ a, b \}_{x_0} , c \}_{x_0} + \{ b, \{ a, c \}_{x_0} \}_{x_0}
   \end{align*}
   and 
   \begin{align*}
       \{ a, b \cdot c \}_{x_0} = \{ x_0, a, b \cdot c \} = \{ x_0, a , b \} \cdot c ~ \! + ~ \! b \cdot \{ x_0, a, c \} =
       \{ a, b \}_{x_0} \cdot c + b \cdot \{ a, c \}_{x_0}.
   \end{align*}
   This shows that $(A, ~ \! \cdot ~ \! , \{ ~, ~ \}_{x_0})$ is a Poisson algebra. For the second part, we first observe that
   \begin{align*}
       \rho_{x_0} (\{ a, b \}_{x_0} ) = \rho (x_0, \{ x_0, a, b \}) \stackrel{(\ref{repn-1})}{=}~& \rho (x_0, a) \rho (x_0, b) - \rho (x_0, b) \rho (x_0, a) - \rho ( \{ x_0, a, x_0 \}, b) \\
      =~&  \rho_{x_0} (a) \rho_{x_0} (b) - \rho_{x_0} (b) \rho_{x_0} (a)
   \end{align*}
   which shows that $(V ; \rho_{x_0})$ is a representation of the Lie algebra $(A, \{ ~, ~\}_{x_0})$. Additionally, 
   \begin{align*}
       \mu (\{ a, b \}_{x_0}) = \mu  (\{ x_0, a, b \}) = \rho (x_0, a) \mu(b) - \mu(b) \rho (x_0, a) = \rho_{x_0} (a) \mu(b) - \mu (b) \rho_{x_0} (a),
   \end{align*}
   \begin{align*}
        \rho_{x_0} (a \cdot b) = \rho(x_0, a \cdot b) = \mu (a) \rho (x_0, b) + \mu (b) \rho (x_0, a) = \mu(a) \rho_{x_0} (b) + \mu(b) \rho_{x_0} (a),
   \end{align*}
   for any $a, b \in A$. Hence $(V; \mu, \rho_{x_0})$ is a representation of the Poisson algebra $(A, ~\! \cdot ~\! , \{ ~, ~\}_{x_0})$.
\end{proof}

\section{Second cohomology group, linear deformations and Nijenhuis operators}\label{section-3}

In this section, we start by defining the second cohomology group of a Nambu-Poisson algebra with coefficients in a representation. Next, we consider the notion of $(1,2)$-linear deformations of a Nambu-Poisson algebra. We observe that a trivial $(1,2)$-linear deformation of a Nambu-Poisson algebra yields a Nijenhuis operator on it.

\subsection{The second cohomology group of a Nambu-Poisson algebra}

Let $(A, ~ \! \cdot ~ \!)$ be a commutative associative algebra and $(V; \mu)$ be a representation of it. A {\bf Harrison $2$-cocycle} of $A$ with coefficients in the representation $V$ is simply a bilinear map $\varphi : A \times A \rightarrow V$ satisfying
\begin{align*}
    \varphi (a, b)= \varphi (b, a) \quad \text{ and } \quad  \mu (a) \varphi (b, c) - \varphi (a \cdot b , c) + \varphi (a, b \cdot c) - \mu (c) \varphi (a, b) = 0, \text{ for } a, b, c \in A.
\end{align*}
A Harrison $2$-cocycle $\varphi$ is said to be a {Harrison $2$-coboundary} if there exits a linear map $f : A \rightarrow V$ such that 
\begin{align*}
    \varphi (a, b) = \mu(a) f (b) - f(a \cdot b) + \mu(b) f (a), \text{ for all } a, b \in A.
\end{align*}

\begin{pro}\label{harr-2-co}
    Let $(A, ~ \! \cdot ~ \! )$ be a commutative associative algebra and $(V; \mu)$ be a representation of it. Then a bilinear map $\varphi : A \times A \rightarrow V$ is a Harrison $2$-cocycle if and only if the operation
    \begin{align}\label{eqn-semid-2co}
        (a, u) \cdot_{\ltimes_\varphi} (b, v) := (a \cdot b ~\! , ~ \! \mu(a) v + \mu(b) u + \varphi (a, b)), \text{ for } (a, u), (b, v) \in A \oplus V
    \end{align}
    makes $A \oplus V$ into a commutative associative algebra.
\end{pro}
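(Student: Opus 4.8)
The statement is an "if and only if" characterizing Harrison 2-cocycles via the semidirect-type product on $A \oplus V$. The natural plan is to verify the two defining conditions of a commutative associative algebra — commutativity and associativity of $\cdot_{\ltimes_\varphi}$ — and show that each corresponds precisely to one of the two conditions packaged into the definition of a Harrison 2-cocycle (symmetry of $\varphi$ and the cocycle identity), using that $(A, \cdot)$ is already commutative associative and $(V;\mu)$ is already a representation.

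First I would handle commutativity. Computing $(a,u) \cdot_{\ltimes_\varphi} (b,v)$ and $(b,v) \cdot_{\ltimes_\varphi} (a,u)$, the first components agree because $\cdot$ is commutative on $A$, and the second components are $\mu(a)v + \mu(b)u + \varphi(a,b)$ versus $\mu(b)u + \mu(a)v + \varphi(b,a)$; these agree for all $u,v$ if and only if $\varphi(a,b) = \varphi(b,a)$ for all $a,b$. So commutativity of $\cdot_{\ltimes_\varphi}$ is equivalent to the symmetry of $\varphi$.

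Next I would compute associativity: expand both $\big((a,u)\cdot_{\ltimes_\varphi}(b,v)\big)\cdot_{\ltimes_\varphi}(c,w)$ and $(a,u)\cdot_{\ltimes_\varphi}\big((b,v)\cdot_{\ltimes_\varphi}(c,w)\big)$. The first components coincide by associativity of $\cdot$ on $A$. For the second components, one side yields $\mu(a\cdot b)w + \mu(c)\big(\mu(a)v+\mu(b)u+\varphi(a,b)\big) + \varphi(a\cdot b, c)$ and the other yields $\mu(a)\big(\mu(b)w+\mu(c)v+\varphi(b,c)\big) + \mu(b\cdot c)u + \varphi(a, b\cdot c)$. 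Using $\mu(a\cdot b) = \mu(a)\mu(b) = \mu(b)\mu(a)$ (valid since $(A,\cdot)$ is commutative associative and $(V;\mu)$ a representation), the terms involving $u$, $v$, $w$ cancel on both sides, and what remains is exactly $\mu(c)\varphi(a,b) + \varphi(a\cdot b, c) = \mu(a)\varphi(b,c) + \varphi(a, b\cdot c)$, i.e. the Harrison cocycle identity $\mu(a)\varphi(b,c) - \varphi(a\cdot b,c) + \varphi(a,b\cdot c) - \mu(c)\varphi(a,b) = 0$. Hence associativity of $\cdot_{\ltimes_\varphi}$ is equivalent to the cocycle identity.

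The argument is essentially a bookkeeping exercise, so there is no serious obstacle; the only point requiring a little care is to ensure that when matching the $u$-dependent terms in the associativity computation one genuinely uses commutativity of $\cdot$ on $A$ (to write $\mu(b\cdot c)u = \mu(b)\mu(c)u$ and compare with $\mu(c)\mu(b)u$ coming from the other grouping) — this is where the hypothesis that $A$ is commutative, not merely associative, is actually needed. Combining the two equivalences gives the claim: $\varphi$ is a Harrison 2-cocycle (symmetric and satisfying the cocycle identity) if and only if $\cdot_{\ltimes_\varphi}$ is both commutative and associative, i.e. makes $A \oplus V$ a commutative associative algebra.
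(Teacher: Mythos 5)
Your proposal is correct: the paper states this proposition without proof, and your verification — commutativity of $\cdot_{\ltimes_\varphi}$ being equivalent to the symmetry of $\varphi$, and associativity reducing (after cancelling the $u$-, $v$-, $w$-terms via $\mu(a\cdot b)=\mu(a)\mu(b)$ and the commutativity of $A$) to the Harrison cocycle identity — is exactly the intended standard argument. Your remark that commutativity of $A$ is genuinely needed to match the $u$- and $v$-dependent terms is the right point of care.
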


Let $(\mathcal{L}, \{ ~, ~, ~\})$ be a $3$-Lie algebra and $(V; \rho)$ be a representation of it. Then a skew-symmetric trilinear map $\psi : \mathcal{L} \times \mathcal{L} \times \mathcal{L} \rightarrow V$ is said to be a {\bf $2$-cocycle} of the $3$-Lie algebra $\mathcal{L}$ with coefficients in the representation $V$ if it satisfies 
\begin{align}
    \psi (a , b, \{ c, d, e \}) + \rho (a, b) \psi (c, d, e) =~& \psi ( \{a, b, c \} , d, e) + \psi (c, \{ a, b, d \}, e) + \psi (c, d, \{ a, b, e \}) \nonumber \\
    & \quad + \rho (d, e) \psi (a, b, c) + \rho (e, c) \psi (a, b, d) + \rho (c, d) \psi (a, b, e), \label{2co-3}
\end{align}
for all $a, b, c, d,e \in \mathcal{L}$. Further, a $2$-cocycle $\psi$ is said to be a {\em $2$-coboundary} if there exists a linear map $f : \mathcal{L} \rightarrow V$ such that
\begin{align*}
    \psi (a, b, c) = \rho (f(a) , b) c + \rho (a, f (b)) c  +   \rho (a, b) f (c) - f (\{ a, b, c \}), \text{ for all } a, b, c \in \mathcal{L}.
\end{align*}

\begin{pro}\label{3lie-2-co}
    Let $(\mathcal{L}, \{ ~, ~, ~\})$ be a $3$-Lie algebra and $(V; \rho)$ be a representation of it. Then a skew-symmetric trilinear map $\psi : \mathcal{L} \times \mathcal{L} \times \mathcal{L} \rightarrow V$ is a $2$-cocycle of $\mathcal{L}$ with coefficients in the representation $V$ if and only if the ternary bracket
    \begin{align}\label{eqn-semid-2co-lie}
        \{ (a, u), (b, v) , (c, w) \}_{\ltimes_\psi} :=~&  (\{ a, b, c \} ~\! , ~ \! \rho(a, b) w + \rho (b, c) u + \rho (c, a) v + \psi (a, b, c) ),
    \end{align}
    for $(a, u), (b, v), (c, w) \in \mathcal{L} \oplus V$, makes $(\mathcal{L} \oplus V, \{ ~, ~, ~ \}_{\ltimes_\psi})$ into a $3$-Lie algebra.
\end{pro}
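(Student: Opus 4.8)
The plan is to check directly that the ternary bracket $\{~,~,~\}_{\ltimes_\psi}$ of (\ref{eqn-semid-2co-lie}) is skew-symmetric and then to determine exactly when it obeys the fundamental identity (\ref{funda-iden}); the claim is that the latter holds precisely when $\psi$ satisfies (\ref{2co-3}). Skew-symmetry is immediate: the $\mathcal{L}$-component $\{a,b,c\}$ is skew-symmetric, the trilinear assignment $\big((a,u),(b,v),(c,w)\big)\mapsto \rho(a,b)w+\rho(b,c)u+\rho(c,a)v$ is skew-symmetric under all permutations of its three arguments (a routine verification using only that $\rho$ is skew-symmetric in its two $\mathcal{L}$-slots), and $\psi$ is skew-symmetric by hypothesis. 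So it remains only to analyze the fundamental identity.

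For $\widetilde a=(a,u),\ldots,\widetilde e=(e,y)$ in $\mathcal{L}\oplus V$, I would expand both sides of the fundamental identity for $\{~,~,~\}_{\ltimes_\psi}$. The $\mathcal{L}$-component is exactly (\ref{funda-iden}) for $(\mathcal{L},\{~,~,~\})$ and hence holds automatically, so everything of interest lies in the $V$-component. In that expansion $\psi$ enters each term at most linearly: it appears once when an \emph{inner} bracket is formed (contributing a term of the shape $\rho(\cdot,\cdot)\psi(\cdot,\cdot,\cdot)$) and once when the \emph{outer} bracket is formed (contributing a term of the shape $\psi(\cdot,\cdot,\{\cdot,\cdot,\cdot\})$), never as a product $\psi\cdot\psi$. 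Hence the $V$-component of the fundamental identity splits as a $\psi$-free part plus a $\psi$-linear part. The $\psi$-free part is precisely the $V$-component of the fundamental identity for the bracket $\{~,~,~\}_\ltimes$ of Proposition \ref{3-lie-rep} (the specialization $\psi=0$); since $(V;\rho)$ is assumed to be a representation of the $3$-Lie algebra $\mathcal{L}$, Proposition \ref{3-lie-rep} guarantees that this $\psi$-free part vanishes identically.

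Consequently the fundamental identity for $\{~,~,~\}_{\ltimes_\psi}$ holds if and only if the $\psi$-linear part vanishes. Collecting those terms, the left-hand side contributes $\rho(a,b)\psi(c,d,e)+\psi(a,b,\{c,d,e\})$, while the three summands on the right contribute $\psi(\{a,b,c\},d,e)+\rho(d,e)\psi(a,b,c)$, then $\psi(c,\{a,b,d\},e)+\rho(e,c)\psi(a,b,d)$, and finally $\psi(c,d,\{a,b,e\})+\rho(c,d)\psi(a,b,e)$; equating the two sides is exactly equation (\ref{2co-3}). Therefore $(\mathcal{L}\oplus V,\{~,~,~\}_{\ltimes_\psi})$ is a $3$-Lie algebra if and only if $\psi$ is a $2$-cocycle of $\mathcal{L}$ with coefficients in $(V;\rho)$. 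The only genuine obstacle is the bookkeeping involved in expanding the $V$-component and checking that every $\psi$-free term cancels; this is handled most cleanly by invoking Proposition \ref{3-lie-rep} instead of re-deriving the representation axioms by hand.
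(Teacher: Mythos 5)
Your proof is correct; the paper states Proposition \ref{3lie-2-co} without proof, and your argument is exactly the standard verification one would supply: skew-symmetry is routine, the $\mathcal{L}$-component of the fundamental identity holds by hypothesis, the $V$-component is affine-linear in $\psi$ with the $\psi$-free part vanishing by Proposition \ref{3-lie-rep}, and the $\psi$-linear part is precisely condition (\ref{2co-3}). Nothing is missing.
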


Inspired by the above definitions and results, we will now define the second cohomology group of a Nambu-Poisson algebra with coefficients in a representation. To obtain the higher degree cohomology groups, we will consider a bicomplex by mixing the Harrison cochain complex and the cochain complex of the underlying $3$-Lie algebra in a separate article. 

\begin{defi}\label{defi-np2co}
    Let $(A, ~ \! \cdot ~ \!, \{ ~, ~, ~ \})$ be a Nambu-Poisson algebra and $(V; \mu, \rho)$ be a representation of it. A {\bf Nambu-Poisson $2$-cocycle} of $A$ with coefficients in the representation $V$ is a pair $(\varphi, \psi)$, where
    \begin{itemize}
        \item $\varphi : A \times A \rightarrow V$ is a Harrison $2$-cocycle of the commutative associative algebra $(A, ~ \! \cdot ~ \!)$ with coefficients in the representation $(V; \mu)$,
        \item $\psi : A \times A \times A \rightarrow V$ is a $2$-cocycle of the $3$-Lie algebra $(A, \{ ~, ~, ~\})$ with coefficients in the representation $(V; \rho)$,
    \end{itemize}
    such that for all $a, b, c, d \in A$, the following compatibility holds:
    \begin{align}\label{2co-np}
        \psi (a, b, c \cdot d) + \rho (a, b) \varphi (c, d) = \varphi (c, \{ a, b, d \}) + \mu (c) \psi (a, b, d) + \varphi (\{ a, b, c \} , d) + \mu (d) \psi (a, b, c).
    \end{align}
    
\end{defi}

The set of all Nambu-Poisson $2$-cocycles of $A$ with coefficients in the representation $V$ forms an abelian group with respect to the componentwise addition. We denote this abelian group simply by $\mathcal{Z}^2_\mathrm{NP} (A, V)$.
By generalizing Propositions \ref{harr-2-co} and \ref{3lie-2-co} to the Nambu-Poisson context, we obtain the following result.

\begin{pro}
    Let $(A, ~ \! \cdot ~ \!, \{ ~, ~, ~ \})$ be a Nambu-Poisson algebra and $(V; \mu, \rho)$ be a representation of it. Then a pair $(\varphi, \psi)$ of a bilinear map $\varphi : A \times A \rightarrow V$ and a skew-symmetric trilinear map $\psi :  A \times A \times A \rightarrow V$ is a Nambu-Poisson $2$-cocycle of $A$ with coefficients in the representation $V$ if and only if the triple $(A \oplus V, ~ \! \cdot_{\ltimes_\varphi} ~ \! , \{ ~, ~ , ~ \}_{\ltimes_\psi})$ is a Nambu-Poisson algebra, where the operations $\cdot_{\ltimes_\varphi}$ and $\{ ~, ~, ~ \}_{\ltimes_\psi}$ are given in (\ref{eqn-semid-2co}) and (\ref{eqn-semid-2co-lie}), respectively. In this case, the Nambu-Poisson algebra $(A \oplus V, ~ \! \cdot_{\ltimes_\varphi} ~ \! , \{ ~, ~ , ~ \}_{\ltimes_\psi})$ is said to be the $(\varphi, \psi)$-twisted semidirect product and it is denoted by $A \ltimes_{(\varphi, \psi)} V.$
\end{pro}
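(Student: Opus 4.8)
The plan is to prove the "if and only if" by carefully unwinding what it means for $(A \oplus V, \cdot_{\ltimes_\varphi}, \{\,,\,,\,\}_{\ltimes_\psi})$ to be a Nambu-Poisson algebra, and matching the three structural requirements against the three hypotheses at issue: $\varphi$ being a Harrison $2$-cocycle, $\psi$ being a $3$-Lie $2$-cocycle, and the compatibility (\ref{2co-np}). Concretely, recall that a Nambu-Poisson algebra requires three things: (i) $(A \oplus V, \cdot_{\ltimes_\varphi})$ is a commutative associative algebra; (ii) $(A \oplus V, \{\,,\,,\,\}_{\ltimes_\psi})$ is a $3$-Lie algebra; and (iii) the Leibniz rule (\ref{leibniz-rule-added}) relating the two operations. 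I would handle these one at a time.

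First, by Proposition \ref{harr-2-co}, condition (i) holds if and only if $\varphi$ is a Harrison $2$-cocycle of $(A, \cdot)$ with coefficients in $(V;\mu)$; this is immediate and requires no new computation. Second, by Proposition \ref{3lie-2-co}, condition (ii) holds if and only if $\psi$ is a $2$-cocycle of the $3$-Lie algebra $(A, \{\,,\,,\,\})$ with coefficients in $(V;\rho)$; again this is a direct invocation of an already-stated result. So the only genuine work is to show that, granted (i) and (ii), the Leibniz rule for $(A \oplus V, \cdot_{\ltimes_\varphi}, \{\,,\,,\,\}_{\ltimes_\psi})$ is equivalent to the compatibility identity (\ref{2co-np}).

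For that remaining step I would take arbitrary elements $(a,u), (b,v), (c,w), (d,z) \in A \oplus V$ and expand both sides of
\[
\{(a,u),(b,v),(c,w)\cdot_{\ltimes_\varphi}(d,z)\}_{\ltimes_\psi} = \{(a,u),(b,v),(c,w)\}_{\ltimes_\psi}\cdot_{\ltimes_\varphi}(d,z) + (c,w)\cdot_{\ltimes_\varphi}\{(a,u),(b,v),(d,z)\}_{\ltimes_\psi}
\]
using the explicit formulas (\ref{eqn-semid-2co}) and (\ref{eqn-semid-2co-lie}). The first components on both sides reduce to the Leibniz rule in $A$ itself, which already holds. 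Comparing the $V$-components, the terms that only involve $\rho$, $\mu$ and the original Nambu-Poisson data cancel precisely because $(A, \cdot, \{\,,\,,\,\})$ is a Nambu-Poisson algebra and $(V;\mu,\rho)$ is a representation (in particular using the identity $\rho(a, b\cdot c) = \mu(b)\rho(a,c) + \mu(c)\rho(a,b)$ from (\ref{np3-rep})), while the surviving terms involving $\varphi$ and $\psi$ collect exactly into
\[
\psi(a,b,c\cdot d) + \rho(a,b)\varphi(c,d) - \varphi(c,\{a,b,d\}) - \mu(c)\psi(a,b,d) - \varphi(\{a,b,c\},d) - \mu(d)\psi(a,b,c) = 0,
\]
which is (\ref{2co-np}). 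Since this must hold for all $a,b,c,d$ (the $u,v,w,z$ components contribute nothing new beyond what (i) and (ii) already encode), the Leibniz rule for the extension is equivalent to (\ref{2co-np}). Combining the three equivalences gives the statement, and the final sentence naming $A \ltimes_{(\varphi,\psi)} V$ is then just a definition.

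The main obstacle, such as it is, is purely bookkeeping: one must be careful that when the Leibniz rule is tested on mixed elements of $A \oplus V$, no additional constraints sneak in beyond (\ref{2co-np}) — i.e. one should check that setting $u = v = w = z = 0$ already captures the full content, and that the terms linear in the $V$-components are automatically satisfied given that $(V;\mu,\rho)$ is a representation. I expect this to go through cleanly, exactly as in the binary Poisson case treated in \cite{BaishyaDas}, so the proof can reasonably be left as a routine verification once the structure above is laid out.
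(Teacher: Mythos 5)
Your proposal is correct and matches the paper's intent: the paper omits the proof, stating only that the result follows by generalizing Propositions \ref{harr-2-co} and \ref{3lie-2-co}, and your decomposition into (i) associativity $\Leftrightarrow$ Harrison $2$-cocycle, (ii) fundamental identity $\Leftrightarrow$ $3$-Lie $2$-cocycle, and (iii) Leibniz rule $\Leftrightarrow$ (\ref{2co-np}) is exactly the routine verification being alluded to. Your key observation — that the terms linear in the $V$-components of the Leibniz rule cancel precisely by the two identities in (\ref{np3-rep}), leaving (\ref{2co-np}) as the only new constraint — checks out.
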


\begin{lem}\label{2co-lemma}
Let $(A, ~ \! \cdot ~ \!, \{ ~, ~, ~ \})$ be a Nambu-Poisson algebra and $(V; \mu, \rho)$ be a representation of it.
    For any linear map $f : A \rightarrow V$, we define a bilinear map $\varphi_f : A \times A \rightarrow V$ and a trilinear map $\psi_f : A \times A \times A \rightarrow V$ by 
    \begin{align*}
        \varphi_f (a, b) :=~& \mu(a) f (b) - f(a \cdot b) + \mu(b) f (a),\\
        \psi_f (a, b, c) := ~& \rho (a, b) f (c) +  \rho ( b,c) f(a)+ \rho (c ,a) f(b)  - f (\{ a, b, c \}),
    \end{align*}
    for $a, b, c \in A$. Then $(\varphi_f, \psi_f)$ is a Nambu-Poisson $2$-cocycle of $A$ with coefficients in $V$.
\end{lem}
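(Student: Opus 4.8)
The plan is to verify the three conditions that together say $(\varphi_f,\psi_f)$ is a Nambu-Poisson $2$-cocycle: (i) $\varphi_f$ is a Harrison $2$-cocycle of $(A,\cdot)$ with coefficients in $(V;\mu)$; (ii) $\psi_f$ is a $2$-cocycle of the $3$-Lie algebra $(A,\{~,~,~\})$ with coefficients in $(V;\rho)$; and (iii) the compatibility identity (\ref{2co-np}) holds. The natural mechanism for all three is the same: $\varphi_f$ and $\psi_f$ are by definition the Harrison coboundary and the $3$-Lie coboundary of the linear map $f$, so one expects each condition to follow from the corresponding coboundary automatically being a cocycle, using only the axioms of a representation of a Nambu-Poisson algebra. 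Concretely, I would first recall (or quote, since it is a standard fact about the Harrison complex) that a Harrison coboundary is a Harrison cocycle, which gives (i) immediately; the symmetry $\varphi_f(a,b)=\varphi_f(b,a)$ is clear from commutativity of $\cdot$ and the formula, and the cocycle identity $\mu(a)\varphi_f(b,c)-\varphi_f(a\cdot b,c)+\varphi_f(a,b\cdot c)-\mu(c)\varphi_f(a,b)=0$ unwinds to a telescoping cancellation after using $\mu(a)\mu(b)=\mu(a\cdot b)$ and associativity. Similarly, (ii) is the statement that a $3$-Lie $2$-coboundary is a $3$-Lie $2$-cocycle; expanding both sides of (\ref{2co-3}) with $\psi=\psi_f$, the terms involving only $f$ composed with brackets reorganize via the fundamental identity (\ref{funda-iden}), and the terms of the form $\rho(-,-)f(-)$ cancel against each other using the representation identities (\ref{repn-1}) and (\ref{2co-3})'s companion display for $\rho$.

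The only genuinely new computation is step (iii), the compatibility (\ref{2co-np}). Here I would substitute the definitions of $\varphi_f$ and $\psi_f$ directly into both sides of
\[
\psi_f(a,b,c\cdot d)+\rho(a,b)\varphi_f(c,d)=\varphi_f(c,\{a,b,d\})+\mu(c)\psi_f(a,b,d)+\varphi_f(\{a,b,c\},d)+\mu(d)\psi_f(a,b,c)
\]
and expand everything. The left side produces terms $\rho(a,b)f(c\cdot d)$, $\rho(b,c\cdot d)f(a)$, $\rho(c\cdot d,a)f(b)$, $-f(\{a,b,c\cdot d\})$, plus $\rho(a,b)(\mu(c)f(d)-f(c\cdot d)+\mu(d)f(c))$; the right side produces the analogous batch of terms. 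I expect the $f(c\cdot d)$ terms to cancel between $\rho(a,b)f(c\cdot d)$ on the left and $-\rho(a,b)f(c\cdot d)$ coming from $\rho(a,b)\varphi_f(c,d)$, the term $-f(\{a,b,c\cdot d\})$ to combine with $-f(c\cdot\{a,b,d\})-f(\{a,b,c\}\cdot d)$ via the Leibniz rule (\ref{leibniz-rule-added}), and the remaining terms involving $\rho$ and $\mu$ applied to $f$ to match up after repeated use of the two mixed relations in (\ref{np3-rep}), namely $\mu(\{a,b,c\})=\rho(a,b)\mu(c)-\mu(c)\rho(a,b)$ and $\rho(a,b\cdot c)=\mu(b)\rho(a,c)+\mu(c)\rho(a,b)$, together with skew-symmetry of $\rho$. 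This bookkeeping is the main obstacle: there are on the order of a dozen terms on each side, and one must be careful with signs coming from the skew-symmetry $\rho(c\cdot d,a)=-\rho(a,c\cdot d)$ and with which representation axiom is invoked where.

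An alternative, cleaner route that I would mention (and possibly prefer) is to avoid term-by-term computation entirely by appealing to the semidirect-product characterization already established in the excerpt. One checks that the maps $\cdot_{\ltimes_{\varphi_f}}$ and $\{~,~,~\}_{\ltimes_{\psi_f}}$ on $A\oplus V$ are exactly obtained by conjugating the untwisted semidirect product operations by the linear automorphism $(a,u)\mapsto(a,u+f(a))$ of $A\oplus V$; since conjugating a Nambu-Poisson structure by a linear isomorphism yields a Nambu-Poisson structure, $(A\oplus V,\cdot_{\ltimes_{\varphi_f}},\{~,~,~\}_{\ltimes_{\psi_f}})$ is a Nambu-Poisson algebra, and then the preceding Proposition (the twisted semidirect product criterion) forces $(\varphi_f,\psi_f)$ to be a Nambu-Poisson $2$-cocycle. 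This reduces the whole lemma to the elementary verification that conjugation by $\mathrm{Id}\oplus f$ transforms $\cdot_\ltimes$ and $\{~,~,~\}_\ltimes$ into $\cdot_{\ltimes_{\varphi_f}}$ and $\{~,~,~\}_{\ltimes_{\psi_f}}$, which is a short direct check. I would present the direct computation as the primary argument for self-containedness but remark on this conceptual shortcut.
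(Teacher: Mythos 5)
Your primary argument is essentially the paper's own proof: parts (i) and (ii) are dispatched as the standard fact that coboundaries are cocycles in the Harrison and $3$-Lie complexes, and the compatibility (\ref{2co-np}) is verified by direct expansion, with exactly the cancellations you predict (the two $\rho(a,b)f(c\cdot d)$ terms cancel, the three $f$-of-product terms vanish by the Leibniz rule (\ref{leibniz-rule-added}), and the rest matches via the two identities in (\ref{np3-rep}) together with skew-symmetry of $\rho$). Your alternative route is also correct and is not the one the paper takes: the automorphism $(a,u)\mapsto(a,u+f(a))$ of $A\oplus V$ does transport the untwisted semidirect product operations precisely onto $\cdot_{\ltimes_{\varphi_f}}$ and $\{~,~,~\}_{\ltimes_{\psi_f}}$, so the twisted semidirect product criterion immediately yields the lemma; this conceptual shortcut replaces the dozen-term bookkeeping with a two-line check and, as a bonus, explains \emph{why} coboundaries must be cocycles here, at the mild cost of relying on the (unproved in the paper) semidirect-product proposition.
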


\begin{proof}
    A direct calculation shows that $\varphi_f$ is a Harrison $2$-cocycle of the commutative associative algebra $(A, ~ \! \cdot ~ \!)$ with coefficients in the representation $(V; \mu)$. Similarly, $\psi_f$ is a $2$-cocycle of the $3$-Lie algebra $(A, \{ ~,~, ~ \})$ with coefficients in the representation $(V; \rho)$. Both of these results also follow from the explicit descriptions of $2$-coboundaries in the respective cochain complexes. Thus, it remains to show the compatibility condition (\ref{2co-np}) for the pair $(\varphi_f, \psi_f)$. For any $a, b, c, d \in A$, we observe that
    \begin{align*}
        &\psi_f (a, b, c \cdot d) + \rho (a, b) \varphi_f (c, d ) - \varphi_f  (c, \{ a, b, d \}) - \mu(c) \psi_f (a, b, d) - \varphi_f  (\{ a, b, c \} , d) - \mu (d) \psi_f (a, b, c) \\
        &= \rho (a, b) f (c \cdot d) + \rho (b, c \cdot d) f(a) + \rho (c \cdot d, a) f(b) - f ( \{ a, b, c \cdot d \}) 
        + \rho (a, b) \mu (c) f(d) \\ 
        & \quad - \rho (a, b) f (c \cdot d)  + \rho (a, b) \mu (d) f (c) - \mu (c) f (\{ a, b, d \}) + f (c \cdot \{ a, b, d \}) - \mu (\{ a, b, d \}) f (c) \\
        & \quad - \mu (c) \rho (a, b) f(d) - \mu (c) \rho (b, d) f (a) - \mu (c) \rho (d, a) f (b) + \mu (c) f (\{ a, b, d \}) \\
        & \quad - \mu (\{ a, b, c \}) f (d) + f (\{ a, b, c \} \cdot d)- \mu(d) f(\{ a, b, c \})\\ 
        & \quad -\mu (d) \rho (a, b) f(c) -\mu (d) \rho (b,c) f(a) -\mu (d) \rho (c,a) f(b) + \mu (d)f(\{a,b,c\})\\
        &= 0 \quad (\text{by using the identities in }(\ref{np3-rep}))
    \end{align*}
    which verifies (\ref{2co-np}). Hence, the result is proved.
\end{proof}

A Nambu-Poisson $2$-cocycle is said to be a Nambu-Poisson $2$-coboundary if it is of the form $(\varphi_f, \psi_f)$, for some linear map $f : A \rightarrow V$. We denote the space of all Nambu-Poisson $2$-coboundaries of $A$ with coefficients in $V$ by the notation $\mathcal{B}^2_\mathrm{NP} (A, V)$. Then it follows from Lemma \ref{2co-lemma} that $\mathcal{B}^2_\mathrm{NP} (A, V) \subset \mathcal{Z}^2_\mathrm{NP} (A, V)$. The quotient group
\begin{align*}
    \mathcal{H}^2_\mathrm{NP} (A, V) := \frac{\mathcal{Z}^2_\mathrm{NP} (A, V) }{\mathcal{B}^2_\mathrm{NP} (A, V) }
\end{align*}
is said to be the {\bf second cohomology group} of the Nambu-Poisson algebra $(A, ~ \! \cdot ~ \!, \{ ~, ~ , ~\})$ with coefficients in the representation $(V; \mu, \rho)$.

In Proposition \ref{fixing-co}, we have seen that fixing a coordinate of the $3$-Lie bracket of a Nambu-Poisson algebra gives rise to a Poisson algebra structure. In the following, a similar result has been proved for the corresponding $2$-cocycles (see \cite{BaishyaDas} for the explicit description of a Poisson $2$-cocycle).

\begin{pro}\label{2co-prop}
    Let $(A, ~ \! \cdot ~ \!, \{ ~, ~, ~ \})$ be a Nambu-Poisson algebra and $(V; \mu, \rho)$ be a representation of it. Suppose $(\varphi, \psi)$ is a Nambu-Poisson $2$-cocycle of $A$ with coefficients in $V$. For any fixed $x_0 \in A$, we define a skew-symmetric bilinear map $\psi_{x_0} :  A \times A \rightarrow V$ by $\psi_{x_0} (a, b) = \psi (x_0, a, b)$, for $a, b \in A$. Then the pair $(\varphi, \psi_{x_0})$ is a Poisson $2$-cocycle of the Poisson algebra $(A, ~ \! \cdot ~ \! , \{ ~, ~ \}_{x_0})$ with coefficients in the representation $(V; \mu, \rho_{x_0}).$
\end{pro}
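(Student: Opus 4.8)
The plan is to verify the three defining conditions of a Poisson $2$-cocycle for the pair $(\varphi, \psi_{x_0})$ with respect to the representation $(V; \mu, \rho_{x_0})$ of the Poisson algebra $(A, ~\!\cdot~\!, \{~,~\}_{x_0})$, exploiting the specialization $x_0$ in exactly the same way as in the proof of Proposition \ref{fixing-co}. By the companion reference \cite{BaishyaDas}, a Poisson $2$-cocycle relative to $(V;\mu,\rho_{x_0})$ is a pair consisting of (i) a symmetric bilinear Harrison $2$-cocycle for the commutative associative algebra $(A,~\!\cdot~\!)$ with coefficients in $(V;\mu)$, (ii) a skew-symmetric bilinear Chevalley--Eilenberg $2$-cocycle for the Lie algebra $(A, \{~,~\}_{x_0})$ with coefficients in $(V;\rho_{x_0})$, and (iii) a compatibility identity mixing the two. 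Condition (i) is immediate: $\varphi$ is unchanged and is already a Harrison $2$-cocycle for $(A,~\!\cdot~\!)$ by hypothesis, and $\mu$ is untouched by the specialization.

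For condition (ii), I would substitute $c \mapsto x_0$ into the $3$-Lie $2$-cocycle identity \eqref{2co-3} satisfied by $\psi$. Writing out $\psi(a,b,\{x_0,d,e\})$, using skew-symmetry to identify $\psi(x_0,\cdot,\cdot) = \psi_{x_0}(\cdot,\cdot)$ and $\rho(x_0,\cdot) = \rho_{x_0}(\cdot)$, and noting that the term $\psi(x_0, \{a,b,x_0\}, e)$ produced on the right contributes the ``anomalous'' piece that, together with $\rho(\{a,b,x_0\},e)\psi(\cdots)$-type terms, collapses --- exactly as the analogous $\{x_0,a,x_0\}$ term was absorbed in the proof of Proposition \ref{fixing-co}. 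What remains should rearrange precisely into the Lie-algebra $2$-cocycle condition $\psi_{x_0}(\{a,b\}_{x_0}, c) + \cdots + \rho_{x_0}(a)\psi_{x_0}(b,c) + \cdots = \psi_{x_0}(a,\{b,c\}_{x_0}) + \rho_{x_0}(b\text{-slot})\cdots$ (with the correct signs from the cyclic structure). A subtle point I would check carefully here is the bookkeeping of which of the five arguments of \eqref{2co-3} one specializes: setting the first or second argument to $x_0$ does not directly give the Lie bracket $\{x_0,\cdot,\cdot\}$ inside $\psi$, so the right choice is to set the \emph{third} argument $c = x_0$, after which $\{a,b,x_0\} = -\{a, x_0, b\} = -\{x_0,?,\}$ must be handled via skew-symmetry; tracking these sign flips is where a careless slip is most likely.

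For condition (iii), the mixed compatibility, I would specialize the Nambu-Poisson compatibility \eqref{2co-np}. Here one must be more careful about which coordinate to fix, since \eqref{2co-np} is not symmetric in its first three slots: the slots $a,b$ play the role of the ``Hamiltonian pair'' while $c,d$ are the associative-product arguments. The natural move is to set $a = x_0$ (or $b = x_0$), turning $\rho(x_0, b)\varphi(c,d)$ into $\rho_{x_0}(b)\varphi(c,d)$, turning $\psi(x_0, b, c\cdot d)$ into $\psi_{x_0}(b, c\cdot d)$, and turning $\{x_0, b, \cdot\}$ into $\{b,\cdot\}_{x_0}$; the $\mu(c)\psi(x_0,b,d)$ and $\mu(d)\psi(x_0,b,c)$ terms become $\mu(c)\psi_{x_0}(b,d)$ and $\mu(d)\psi_{x_0}(b,c)$. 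This should produce verbatim the Poisson mixed-cocycle identity from \cite{BaishyaDas} once one also uses that $(V;\mu,\rho_{x_0})$ is a representation of the Poisson algebra, which is already granted by Proposition \ref{fixing-co}. I expect conditions (i) and (iii) to be essentially mechanical, and the main obstacle — though still routine — to be condition (ii): correctly absorbing the ``extra'' term $\psi(x_0,\{a,b,x_0\},e)$ (equivalently, $\rho(\{a,b,x_0\},\cdot)$-type contributions) and managing the sign changes forced by skew-symmetry so that the five-term $3$-Lie identity folds down to the three-term Lie identity.
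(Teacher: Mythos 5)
Your overall strategy --- verify the three conditions of a Poisson $2$-cocycle by specializing the Nambu--Poisson cocycle identities at $x_0$ --- is exactly the paper's, and your treatment of conditions (i) and (iii) is correct: the paper likewise leaves $\varphi$ untouched and obtains the mixed compatibility by putting $x_0$ in the first slot of (\ref{2co-np}). The gap is in condition (ii), precisely where you predicted a slip. Substituting only the third argument $c=x_0$ into (\ref{2co-3}) yields
\begin{align*}
\psi(a,b,\{x_0,d,e\})+\rho(a,b)\psi(x_0,d,e)
=~&\psi(\{a,b,x_0\},d,e)+\psi(x_0,\{a,b,d\},e)+\psi(x_0,d,\{a,b,e\})\\
&+\rho(d,e)\psi(a,b,x_0)+\rho(e,x_0)\psi(a,b,d)+\rho(x_0,d)\psi(a,b,e),
\end{align*}
which still contains $\rho(a,b)$, $\psi(a,b,\cdot)$ and $\psi(a,b,\{x_0,d,e\})$ with both of $a,b$ generic; none of these can be rewritten through $\rho_{x_0}$ or $\psi_{x_0}$, so this identity is simply not the Chevalley--Eilenberg $2$-cocycle condition for $\psi_{x_0}$. (Moreover, the ``anomalous'' terms you name, $\psi(x_0,\{a,b,x_0\},e)$ and $\rho(\{a,b,x_0\},e)$-type terms, do not actually occur in (\ref{2co-3}) under this substitution.)

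The correct specialization is to put $x_0$ into \emph{two} slots: take $a=x_0$ and $c=x_0$ in (\ref{2co-3}), renaming the remaining variables $b,d,e$ as $a,b,c$. The left side becomes $\psi(x_0,a,\{x_0,b,c\})+\rho(x_0,a)\psi(x_0,b,c)$; the terms $\psi(\{x_0,a,x_0\},b,c)$ and $\rho(b,c)\psi(x_0,a,x_0)$ vanish outright by total skew-symmetry (no cancellation against other terms is needed); and the four surviving right-hand terms pick up minus signs under the skew-symmetry of $\psi$ and $\rho$, giving exactly the six-term cyclic identity
\begin{align*}
\rho_{x_0}(a)\psi_{x_0}(b,c)+\rho_{x_0}(b)\psi_{x_0}(c,a)+\rho_{x_0}(c)\psi_{x_0}(a,b)
+\psi_{x_0}(a,\{b,c\}_{x_0})+\psi_{x_0}(b,\{c,a\}_{x_0})+\psi_{x_0}(c,\{a,b\}_{x_0})=0,
\end{align*}
which is how the paper verifies (ii). With this one correction your proof goes through and coincides with the paper's.
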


\begin{proof}
For any $a, b, c \in A$, we observe that
\begin{align*}
    &(\rho_{x_0} (a)) \psi_{x_0} (b, c) + (\rho_{x_0} (b)) \psi_{x_0} (c, a) + (\rho_{x_0} (c)) \psi_{x_0} (a, b) \\
    & \qquad + \psi_{x_0} (a, \{ b, c \}_{x_0} ) + \psi_{x_0} (b, \{ c, a \}_{x_0}) + \psi_{x_0} (c, \{ a, b \}_{x_0} ) \\
    &= \rho (x_0, a) \psi (x_0, b, c) + \rho (x_0, b) \psi (x_0, c, a) + \rho (x_0, c) \psi (x_0, a, b)  \\
    & \qquad + \psi (x_0, a, \{ x_0, b, c \}) + \psi (x_0, b, \{ x_0, c, a \}) + \psi (x_0, c, \{ x_0, a, b \} ) = 0 \quad (\text{using } (\ref{2co-3})).
\end{align*}
Hence $\psi_{x_0}$ is a $2$-cocycle of the Lie algebra $(A, \{ ~, ~ \}_{x_0})$ with coefficients in the representation $(V; \rho_{x_0})$. Moreover, we also have
\begin{align*}
    &\psi_{x_0} (a, b \cdot c) + (\rho_{x_0} (a)) \varphi (b, c) \\
    &= \psi (x_0, a, b \cdot c) + \rho (x_0, a) \varphi (b, c) \\
    &= \varphi (b, \{ x_0, a, c \}) + \mu(b) \psi (x_0, a, c ) + \varphi (\{ x_0, a, b \}, c) + \mu (c) \psi (x_0, a, b) \quad (\text{by } (\ref{2co-np}))\\
    &= \varphi (b, \{ a, c \}_{x_0}) + \mu (b) \psi_{x_0} (a, c) + \varphi (\{ a, b \}_{x_0}, c) + \mu (c) \psi_{x_0} (a, b).
\end{align*}
This proves that $(\varphi, \psi_{x_0})$ is a Poisson $2$-cocycle.
\end{proof}

The above result shows that Definition \ref{defi-np2co} of a Nambu-Poisson $2$-cocycle is compatible with Poisson $2$-cocycles.

\subsection{Linear deformations and Nijenhuis operators}

\begin{defi}
     Let $(A, ~ \! \cdot ~ \!, \{ ~, ~, ~ \})$ be a Nambu-Poisson algebra. An {\bf $(1,2)$-linear deformation} of $A$ consists of two parametrized operations $~\cdot_t : A \times A \rightarrow A$ and $\{ ~, ~, ~ \}_t : A \times A \times A \rightarrow A$ of the form
     \begin{align}\label{parametrized}
         a \cdot_t  b = a \cdot b + t ~ \! \varphi_1 (a, b) ~~~~ \text{ and } ~~~~ \{ a, b, c \}_t = \{ a, b,c \} + t ~ \!  \psi_1 (a, b, c) + t^2 ~\!  \psi_2 (a, b, c )
     \end{align}
     (where $\varphi_1 : A \times A \rightarrow A$ is a symmetric bilinear map and $\psi_1, \psi_2 : A \times A \times A \rightarrow A$ are skew-symmetric trilinear maps) that makes the triple $(A, ~\! \cdot_t ~ \! , \{ ~, ~, ~\}_t)$ into a Nambu-Poisson algebra.
\end{defi}

In this case, we often say that $(\varphi_1 ; \psi_1, \psi_2)$ generates the $(1,2)$-linear deformation $(A, ~\! \cdot_t ~ \! , \{ ~, ~, ~\}_t)$.

\begin{thm}
     Let $(A, ~ \! \cdot ~ \!, \{ ~, ~, ~ \})$ be a Nambu-Poisson algebra. Suppose there is a symmetric bilinear map $\varphi_1: A \times A \rightarrow A$ and skew-symmetric trilinear maps $\psi_1, \psi_2: A \times A \times A \rightarrow A$. Then $(\varphi_1; \psi_1, \psi_2)$ generates an $(1, 2)$-linear deformation of the given Nambu-Poisson algebra if and only if the following conditions hold:
     \begin{itemize}
     \item[(i)] $(\varphi_1, \psi_1)$ is a Nambu-Poisson $2$-cocycle of the given Nambu-Poisson algebra $(A, ~ \! \cdot ~ \!, \{ ~, ~, ~ \})$ with coefficients in the adjoint representation,
    \item[(ii)] $(A, \varphi_1, \psi_2)$ is a Nambu-Poisson algebra,
    \item[(iii)] for all $a, b, c, d, e \in A$,
     \end{itemize}
     \begin{align}\label{thm-first-eq}
         &\psi_2 (a, b, \{ c, d, e \}) + \{ a, b, \psi_2 (c, d, e ) \} + \psi_1 (a, b, \psi_1 (c, d, e )) 
         = \psi_2 (\{ a, b, c \}, d, e ) \nonumber \\
         & \qquad + \{ \psi_2 (a, b, c), d, e \} + \psi_1 (\psi_1 (a, b, c), d, e )  
         + \psi_2 (c, \{ a, b, d \}, e) + \{ c, \psi_2 (a, b, d ), e \} \nonumber \\ & \qquad + \psi_1 (c, \psi_1 (a, b, d), e ) 
         +\psi_2 (c, d, \{ a, b, e \}) + \{ c, d , \psi_2 (a, b, e ) \} + \psi_1 (c, d, \psi_1 (a, b, e )),
     \end{align}
     \begin{align}\label{thm-second-eq}
         &\psi_1 (a, b, \psi_2 (c, d, e )) + \psi_2 (a, b, \psi_1 (c, d, e )) = \psi_1 (\psi_2 (a, b, c) , d, e) + \psi_2 (\psi_1 (a, b, c), d, e)  \nonumber \\
        & \quad + \psi_1 (c, \psi_2 (a, b, d) , e ) + \psi_2 (c, \psi_1 (a, b, d) , e ) + \psi_1 (c, d , \psi_2 (a, b, e) ) + \psi_2 (c, d, \psi_1 (a, b, e )),
     \end{align}
     \begin{align}\label{thm-third-eq}
         \psi_2 (a, b, c \cdot d) + \psi_1 (a, b,  \varphi_1 (c, d)) =~& \varphi_1 (c, \psi_1 (a, b, d) ) + c \cdot \psi_2 (a, b, d )  \nonumber \\
         &+ \varphi_1 ( \psi_1 (a, b, c) , d) + \psi_2 (a, b, c) \cdot d.
     \end{align}
\end{thm}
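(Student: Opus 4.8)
The plan is to unravel what it means for the parametrized operations $\cdot_t$ and $\{~,~,~\}_t$ in \eqref{parametrized} to satisfy \emph{all} the axioms of a Nambu-Poisson algebra, and to collect the resulting identities according to the power of $t$ in which they appear. Since $\cdot_t = \cdot + t\varphi_1$ is degree one in $t$ while $\{~,~,~\}_t = \{~,~,~\} + t\psi_1 + t^2\psi_2$ is degree two, each Nambu-Poisson axiom becomes a polynomial identity in $t$, and $(A,\cdot_t,\{~,~,~\}_t)$ is a Nambu-Poisson algebra if and only if every coefficient of every such polynomial vanishes. The coefficient of $t^0$ is automatically satisfied because $(A,\cdot,\{~,~,~\})$ is already a Nambu-Poisson algebra, so the content is in the coefficients of $t^1$, $t^2$, $t^3$, and (from the fundamental identity, which is quadratic in the bracket composed with itself, hence degree four in $t$) $t^4$.

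First I would treat the three families of axioms separately. Associativity and commutativity of $\cdot_t$ depend only on $\varphi_1$: commutativity of $\cdot_t$ is automatic since $\varphi_1$ is symmetric, and associativity gives, at order $t^1$, the Harrison cocycle condition on $\varphi_1$ (with respect to $\mu_\mathrm{ad}$), and at order $t^2$, the associativity of $\varphi_1$ itself. Next, the fundamental identity for $\{~,~,~\}_t$: expanding $\{a,b,\{c,d,e\}_t\}_t$ and the three terms on the right, the coefficient of $t^1$ is precisely the $2$-cocycle condition \eqref{2co-3} for $\psi_1$ with respect to $\rho_\mathrm{ad}$; the coefficient of $t^2$ is exactly \eqref{thm-first-eq}; the coefficient of $t^3$ is exactly \eqref{thm-second-eq}; and the coefficient of $t^4$ is the fundamental identity for $\psi_2$ alone. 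Finally, the Leibniz rule $\{a,b,c\cdot_t d\}_t = \{a,b,c\cdot_t d\}_t\cdot_t d + \ldots$ mixes $\varphi_1$ with $\psi_1,\psi_2$: the coefficient of $t^1$ is the compatibility condition \eqref{2co-np} between $\varphi_1$ and $\psi_1$ (so that, together with the two cocycle conditions already obtained, $(\varphi_1,\psi_1)$ is a Nambu-Poisson $2$-cocycle, giving (i)); the coefficient of $t^2$ is exactly \eqref{thm-third-eq}; and the coefficient of $t^3$ is the Leibniz rule relating $\varphi_1$ and $\psi_2$.

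Assembling these, condition (i) packages the three order-$t^1$ identities (Harrison cocycle for $\varphi_1$, $3$-Lie $2$-cocycle for $\psi_1$, and compatibility \eqref{2co-np}), invoking Definition \ref{defi-np2co}. Condition (ii) packages the three "top-order" identities: associativity of $\varphi_1$ (order $t^2$ of associativity), the fundamental identity for $\psi_2$ (order $t^4$ of the fundamental identity), and the Leibniz rule for $(\varphi_1,\psi_2)$ (order $t^3$ of Leibniz) — these are precisely the axioms making $(A,\varphi_1,\psi_2)$ a Nambu-Poisson algebra. Condition (iii), i.e. \eqref{thm-first-eq}, \eqref{thm-second-eq}, \eqref{thm-third-eq}, collects the remaining "cross" terms of orders $t^2$ and $t^3$. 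For the converse direction one simply runs this bookkeeping backwards: assuming (i), (ii), (iii) one checks that every coefficient of $t$ in every axiom vanishes, hence $(A,\cdot_t,\{~,~,~\}_t)$ is genuinely a Nambu-Poisson algebra for all $t$.

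The routine part is the expansion itself; the only place demanding care is the fundamental identity, where $\{~,~,~\}_t$ appears composed with itself and one must correctly separate the degree-two, degree-three and degree-four contributions in $t$ and match them against \eqref{2co-3}, \eqref{thm-first-eq}, \eqref{thm-second-eq} and the fundamental identity for $\psi_2$ — keeping the skew-symmetrizations in the three "outer-derivation" slots straight. The main obstacle, then, is purely organizational: ensuring that the order-$t^1$ terms really do reassemble into the single statement "$(\varphi_1,\psi_1)$ is a Nambu-Poisson $2$-cocycle" (which requires recognizing \eqref{2co-np} with $\mu=\mu_\mathrm{ad}$, $\rho=\rho_\mathrm{ad}$ inside the Leibniz expansion) and that the top-order terms really do reassemble into "$(A,\varphi_1,\psi_2)$ is a Nambu-Poisson algebra"; once the correspondence of coefficients is laid out in a table, the proof is complete with no further computation. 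I would present it as: "Expanding the defining axioms of a Nambu-Poisson algebra for $(A,\cdot_t,\{~,~,~\}_t)$ in powers of $t$ and comparing coefficients, one obtains the following identities," followed by the grouping described above, and then the short remark that (i)--(iii) are exactly equivalent to the vanishing of all these coefficients.
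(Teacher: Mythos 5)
Your proposal is correct and is essentially the paper's own proof: expand associativity, the fundamental identity, and the Leibniz rule for $(A,\cdot_t,\{~,~,~\}_t)$ in powers of $t$, and group the order-$t^1$ coefficients into the Nambu-Poisson $2$-cocycle condition for $(\varphi_1,\psi_1)$, the top-order coefficients (orders $t^2$, $t^4$, $t^3$ respectively) into the axioms making $(A,\varphi_1,\psi_2)$ a Nambu-Poisson algebra, and the remaining cross terms into \eqref{thm-first-eq}--\eqref{thm-third-eq}. Your identification of which identity sits at which power of $t$ agrees exactly with the paper's equations, so nothing is missing.
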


\begin{proof}
    Consider the parametrized operations $\cdot_t : A \times A \rightarrow A$ and $\{ ~, ~, ~\}_t : A \times A \times A \rightarrow A$ given in (\ref{parametrized}). Then $(A, ~ \! \cdot_t ~ \!)$ is a commutative associative algebra if and only if $(a \cdot_t  b) \cdot_t  c = a \cdot_t (b \cdot_t c)$, for all $a, b, c \in A$. This is equivalent to the conditions (which are obtained by comparing the corresponding coefficients of $t^i$, for $i=1,2$)
    \begin{align}
        \varphi_1 (a \cdot b ,c) + \varphi_1 (a, b) \cdot c =~& \varphi_1 (a, b \cdot c) + a \cdot \varphi_1 (b, c), \label{ass-co1}\\
        \varphi_1 (\varphi_1 (a, b), c) =~& \varphi_1 (a, \varphi_1 (b, c)),\label{ass-co2}
    \end{align}
    for all $a, b, c \in A.$ Next, $(A, \{ ~, ~, ~ \}_t)$ is a $3$-Lie algebra if and only if the fundamental identity holds
    \begin{align*}
        \{ a, b, \{ c, d, e \}_t \}_t = \{ \{ a, b, c \}_t, d , e \}_t + \{ c, \{ a, b, d \}_t, e \}_t + \{ c, d, \{ a, b, e \}_t \}_t,
    \end{align*}
    for all $a, b, c, d, e \in A$. In the same way, this is equivalent to the conditions:
    \begin{align}\label{eqn-c}
        &\psi_1 (a, b, \{ c, d, e \}) + \{ a, b, \psi_1 (c, d, e ) \}= \psi_1 (\{ a, b, c \}, d, e ) + \{ \psi_1 (a, b, c) , d, e \} \nonumber \\
        & \qquad \quad + \psi_1 (c, \{ a, b, d \}, e) + \{ c, \psi_1 (a, b, d), e \} + \psi_1 (c, d, \{ a, b, e \}) + \{ c, d , \psi_1 (a, b, e ) \},
    \end{align}
    \begin{align}\label{eqn-f}
        \psi_2 (a, b, \psi_2 (c, d, e) ) = \psi_2 (\psi_2 (a, b, c), d, e ) + \psi_2 (c, \psi_2 (a, b, d), e) + \psi_2 (c, d, \psi_2 (a, b, e))
    \end{align}
    together with the identities (\ref{thm-first-eq}) and (\ref{thm-second-eq}). Additionally, the Leibniz rule 
    \begin{align*}
    \{ a, b, c \cdot_t d \}_t = \{ a, b, c \}_t \cdot_t d + c \cdot_t \{ a, b, d \}_t
    \end{align*}
    holds if and only if 
    \begin{align}\label{eqn-g}
        \psi_1 (a, b, c \cdot d) + \{ a, b, \varphi_1 (c, d ) \} = \varphi_1 (\{ a, b, c \}, d) + \psi_1 (a, b, c ) \cdot d + \varphi_1 (c , \{ a, b, d \}) + c \cdot \psi_1 (a, b, d ),
    \end{align}
    \begin{align}\label{eqn-i}
        \psi_2 (a, b, \varphi_1 (c, d) ) = \varphi_1 (\psi_2 (a, b, c), d) + \varphi_1 (c, \psi_2 (a, b, d )),
    \end{align}
    and the identity (\ref{thm-third-eq}) holds. We observe that the conditions (\ref{ass-co1}), (\ref{eqn-c}) and (\ref{eqn-g}) imply that the pair $(\varphi_1, \psi_1)$ is a Nambu-Poisson $2$-cocycle of the Nambu-Poisson algebra $(A, ~ \! \cdot ~ \!, \{ ~, ~, ~ \})$ with coefficients in the adjoint representation. On the other hand, the conditions (\ref{ass-co2}), (\ref{eqn-f}) and (\ref{eqn-i}) imply that the triple $(A, \varphi_1, \psi_2)$ is a Nambu-Poisson algebra. This completes the proof.
\end{proof}

\begin{defi}
     Let $(A, ~ \! \cdot ~ \!, \{ ~, ~, ~ \})$ be a Nambu-Poisson algebra. Two $(1,2)$-linear deformations $(A, ~\! \cdot_t ~\! , \{ ~, ~, ~ \}_t)$ and $(A, ~\! \cdot'_t ~\! , \{ ~, ~, ~ \}'_t)$ are said to be {\bf equivalent} if there exists a linear map $N : A \rightarrow A$ such that the parametrized map $(\mathrm{Id} + t N) : A \rightarrow A$ is a homomorphism of Nambu-Poisson algebras from $(A, ~\! \cdot_t ~\! , \{ ~, ~, ~ \}_t)$ to $(A, ~\! \cdot'_t ~\! , \{ ~, ~, ~ \}'_t)$.
\end{defi}

Note that the map $(\mathrm{Id} + tN) : A \rightarrow A$ is a homomorphism of commutative associative algebras from $(A, ~ \! \cdot_t ~ \!)$ to $(A, ~ \! \cdot'_t ~ \!)$, is equivalent to
\begin{align}
    \varphi_1 (a, b) - \varphi_1' (a, b) =~& N(a) \cdot b + a \cdot N (b) - N (a \cdot b), \label{equiv-1} \\
    N (\varphi_1 (a, b)) =~& \varphi_1' (N (a), b) + \varphi_1' (a, N (b) ) + N(a) \cdot N (b), \label{equiv-2}\\
    \varphi_1' (N(a), N(b) ) =~& 0.
\end{align}
Similarly, $(\mathrm{Id}+ t N)$ is also a homomorphism of $3$-Lie algebras from $(A, \{ ~, ~, ~ \}_t)$ to $(A, \{ ~, ~, ~ \}'_t)$, is equivalent to the conditions
\begin{align}
    &\psi_1 (a, b, c) - \psi_1' (a, b, c) = \{ N (a), b, c \} + \{ a, N (b), c \} + \{ a, b, N (c) \} - N \{ a, b, c\}, \label{equiv-4}\\
   & \psi_2 (a, b, c) - \psi_2' (a, b, c) = \{ N (a), N (b), c \} + \{ N (a), b, N (c) \} + \{ a, N (b), N (c) \} \nonumber \\
    & \qquad \qquad \qquad \qquad \qquad + \psi_1' (N (a), b, c) + \psi_1' (a, N (b), c ) + \psi_1' (a, b, N (c)) - N (\psi_1 (a, b, c)), \label{equiv-5}\\
   & \qquad  N (\psi_2 (a, b, c)) = \psi_1' (N (a) , N (b), c ) + \psi_1' (N (a), b, N (c)) + \psi_1' (a, N (b), N(c)) \nonumber \\
    &\qquad \qquad \qquad \qquad+ \psi_1' (N (a), b, c) + \psi_1' (a, N (b), c) + \psi_1' (a, b, N (c)) + \{ N(a), N(b), N(c) \}, \label{equiv-6}\\
    &\psi_1' (N(a), N(b) , N(c)) + \psi_2' (N(a), N(b), c) + \psi_2' (N(a), b, N(c)) + \psi_2' (a, N(b), N(c)) = 0, \label{equiv-7}\\
    & \qquad \qquad \qquad \psi_2' (N(a), N(b), N(c) ) = 0, \label{equiv-8}
\end{align}
for all $a, b, c \in A$.

\begin{defi}
    Let $(A, ~ \! \cdot ~ \!, \{ ~, ~, ~ \})$ be a Nambu-Poisson algebra. An $(1, 2)$-linear deformation $(A, ~\! \cdot_t ~\! , \{ ~, ~, ~ \}_t)$ is said to be {\bf trivial} if it is equivalent to the undeformed one.
\end{defi}

Hence for a trivial $(1,2)$-linear deformation $(A, ~\! \cdot_t ~\! , \{ ~, ~, ~ \}_t)$, all the identities (\ref{equiv-1})-(\ref{equiv-8}) hold when we take $\varphi_1' = 0$ and $\psi_1' = \psi_2' = 0$. In these identities, $N$ is a linear map such that $(\mathrm{Id} + t N) : A \rightarrow A$ provides an equivalence of the deformation $(A, ~\! \cdot_t ~\! , \{ ~, ~, ~ \}_t)$ with the undeformed one. Hence, it follows from (\ref{equiv-1}) and (\ref{equiv-2}) (by taking $\varphi_1' = 0$) that
\begin{align}
    \varphi_1 (a, b) =~& N(a) \cdot b + a \cdot N (b) - N (a \cdot b), \label{nij-defor1}\\
    N (\varphi_1 (a, b)) =~& N(a) \cdot N (b).
\end{align}
Similarly, it follows from (\ref{equiv-4}), (\ref{equiv-5}) and (\ref{equiv-6}) (by taking $\psi_1' = \psi_2' = 0$) that
\begin{align}
    \psi_1 (a, b, c) =~&  \{ N (a), b, c \} + \{ a, N (b), c \} + \{ a, b, N (c) \} - N \{ a, b, c\}, \\
    \psi_2 (a, b, c) =~& \{ N (a), N (b), c \} + \{ N (a), b, N (c) \} + \{ a, N (b), N (c) \} - N (\psi_1 (a, b, c)), \\
    N (\psi_2 (a, b, c)) =~& \{ N (a), N(b), N(c) \}, \text{ for } a, b, c \in A. \label{nij-defor5}
\end{align}


We will now introduce Nijenhuis operators on a Nambu-Poisson algebra and consider some basic properties.

\begin{defi}
    Let $(A, ~ \! \cdot ~ \!, \{ ~, ~, ~ \})$ be a Nambu-Poisson algebra. A linear map $N: A \rightarrow A$ is said to be a {\bf Nijenhuis operator} on $A$ if
    \begin{align}
        N(a) \cdot N (b) =~& N \big( N(a) \cdot b + a \cdot N (b) - N (a \cdot b) \big), \label{nij-com}\\
        \{ N (a), N (b), N (c) \} =~& N \big( \{ N (a), N (b), c \} + \{ N (a) , b, N (c) \} + \{ a, N (b), N (c) \} \nonumber \\
        & \qquad - N ( \{ N (a), b, c \} + \{ a, N (b), a \} + \{ a, b, N (c) \} - N \{ a, b, c \} ) \big), \label{nij-3-lie}
    \end{align}
    for all $a, b, c \in A$. The first condition is equivalent to saying that $N$ is a Nijenhuis operator on the commutative associative algebra $(A, ~ \! \cdot ~ \! )$ while the second condition is equivalent to saying that $N$ is a Nijenhuis operator on the $3$-Lie algebra $(A, \{ ~, ~, ~ \})$.
\end{defi}

\begin{rmk}
    Let $(A, ~ \! \cdot_t ~ \!, \{ ~, ~, ~ \}_t)$ be a trivial $(1,2)$-linear deformation of a Nambu-Poisson algebra $(A, ~ \! \cdot ~ \!, \{ ~, ~, ~ \})$, and let $(\mathrm{Id} + tN) : A \rightarrow A$ defines an equivalence of the deformation $(A, ~ \! \cdot_t ~ \!, \{ ~, ~, ~ \}_t)$ with the undeformed one. Then it follows from (\ref{nij-defor1})-(\ref{nij-defor5}) that the map $N : A \rightarrow A$ is a Nijenhuis operator on the Nambu-Poisson algebra $(A, ~ \! \cdot ~ \!, \{ ~, ~, ~ \}).$
\end{rmk}

It is well-known that the higher powers of a Nijenhuis operator (defined on a Lie algebra) are also Nijenhuis operators. This result is also known for Nijenhuis operators defined on commutative associative algebras \cite{Carinena} and on $3$-Lie algebras \cite{LiuShengZhouBai}. Combining their results, we get the following.

\begin{pro}\label{prop-higher-nij}
    Let $(A, ~ \! \cdot ~ \!, \{ ~, ~, ~ \})$ be a Nambu-Poisson algebra and $N : A \rightarrow A$ be a Nijenhuis operator on it. Then for any $k \geq 0$, the map $N^k: A \rightarrow A$ is also a Nijenhuis operator on the given Nambu-Poisson algebra.
\end{pro}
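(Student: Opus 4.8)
The plan is to deduce this statement directly from the two already-known facts cited just before it: that higher powers of a Nijenhuis operator on a commutative associative algebra are Nijenhuis operators (as in \cite{Carinena}), and that higher powers of a Nijenhuis operator on a $3$-Lie algebra are Nijenhuis operators (as in \cite{LiuShengZhouBai}). Since Definition of a Nijenhuis operator on the Nambu-Poisson algebra $(A, ~\! \cdot ~\!, \{ ~, ~, ~ \})$ is literally the conjunction of the two conditions (\ref{nij-com}) and (\ref{nij-3-lie}) — the first being exactly the Nijenhuis condition for the commutative associative algebra $(A, ~\! \cdot ~\!)$ and the second being exactly the Nijenhuis condition for the $3$-Lie algebra $(A, \{ ~, ~, ~ \})$ — there is no interaction between the product and the bracket in the definition, so the two facts can be applied independently.

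Concretely, I would argue as follows. Let $N : A \rightarrow A$ be a Nijenhuis operator on the Nambu-Poisson algebra. By the remark following the definition, $N$ is in particular a Nijenhuis operator on the commutative associative algebra $(A, ~\! \cdot ~\!)$ and on the $3$-Lie algebra $(A, \{ ~, ~, ~ \})$. Fix $k \geq 0$. The case $k = 0$ (giving $N^0 = \mathrm{Id}$) and $k = 1$ are immediate, so assume $k \geq 2$. Applying the result of \cite{Carinena} to $(A, ~\! \cdot ~\!)$ shows that $N^k$ satisfies (\ref{nij-com}); applying the result of \cite{LiuShengZhouBai} to $(A, \{ ~, ~, ~ \})$ shows that $N^k$ satisfies (\ref{nij-3-lie}). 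Since these are precisely the two defining conditions for a Nijenhuis operator on the Nambu-Poisson algebra, $N^k$ is a Nijenhuis operator on $(A, ~\! \cdot ~\!, \{ ~, ~, ~ \})$, completing the proof.

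There is essentially no obstacle here beyond correctly invoking the two external results; the content of the proposition is the observation that the Nambu-Poisson Nijenhuis condition decouples into its commutative-associative part and its $3$-Lie part, neither of which references the Leibniz rule (\ref{leibniz-rule-added}). If one wanted a self-contained treatment, the harder ingredient to reprove would be the $3$-Lie case (\ref{nij-3-lie}) for $N^k$, where one uses an induction on $k$ together with the standard telescoping identity $N^i \{ N^j a, b, c \} + N^j \{ a, N^i b, c \} = N^{i+j}\{ a, b, c \} + \{ N^i a, N^j b, c \}$ and its cyclic analogues to collapse the mixed terms; but since the excerpt explicitly licenses quoting \cite{Carinena} and \cite{LiuShengZhouBai}, the combined statement follows with no real computation. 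I therefore expect the author's proof to be a one- or two-line reduction of exactly this kind.
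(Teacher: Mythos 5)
Your argument is exactly the paper's: the authors state the proposition as an immediate consequence of combining the commutative-associative case from \cite{Carinena} with the $3$-Lie case from \cite{LiuShengZhouBai}, precisely because the Nambu-Poisson Nijenhuis condition is the conjunction of (\ref{nij-com}) and (\ref{nij-3-lie}) with no interaction between the two. Your proposal is correct and matches the intended (one-line) reduction.
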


\begin{thm}\label{thm-deformed-np}
    Let $(A, ~ \! \cdot ~ \!, \{ ~, ~, ~ \})$ be a Nambu-Poisson algebra and $N : A \rightarrow A$ be a Nijenhuis operator on it. Then the triple $A_N = (A, ~ \! \cdot_N ~ \! , \{ ~, ~, ~ \}_N)$ is a new Nambu-Poisson algebra, where
    \begin{align*}
        a \cdot_N b :=~& N(a) \cdot b + a \cdot N (b) - N (a \cdot b), \\
        \{ a, b, c \}_N :=~&  \{ N (a), N (b), c \} + \{ N (a) , b, N (c) \} + \{ a, N (b), N (c) \} \\
        & \qquad - N ( \{ N (a), b, c \} + \{ a, N (b), a \} + \{ a, b, N (c) \} - N \{ a, b, c \} ),
    \end{align*}
    for $a, b, c \in A$. Moreover, $N: A_N \rightarrow A$ is a homomorphism of Nambu-Poisson algebras.
\end{thm}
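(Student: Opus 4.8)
The statement asserts two things: that $A_N=(A,\cdot_N,\{~,~,~\}_N)$ is a Nambu-Poisson algebra, and that $N\colon A_N\to A$ is a homomorphism of Nambu-Poisson algebras. The natural strategy is to treat the three constituent axioms separately, invoking known results for the commutative associative and $3$-Lie parts, and then verifying the only genuinely new ingredient, namely the Leibniz rule relating $\cdot_N$ and $\{~,~,~\}_N$. The homomorphism claim will then follow almost immediately from the defining identities of a Nijenhuis operator.

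First I would recall that, by the last sentence of the definition of a Nijenhuis operator, condition (\ref{nij-com}) says precisely that $N$ is a Nijenhuis operator on the commutative associative algebra $(A,\cdot)$; hence by the cited result of Cariñena et al.\ \cite{Carinena} the deformed product $\cdot_N$ is again commutative and associative. Likewise, condition (\ref{nij-3-lie}) says that $N$ is a Nijenhuis operator on the $3$-Lie algebra $(A,\{~,~,~\})$, so by Liu–Sheng–Zhou–Bai \cite{LiuShengZhouBai} the deformed bracket $\{~,~,~\}_N$ is a skew-symmetric trilinear operation satisfying the fundamental identity (\ref{funda-iden}). Thus both $(A,\cdot_N)$ and $(A,\{~,~,~\}_N)$ are of the required type, and it remains to establish the Leibniz rule
\[
\{a,b,c\cdot_N d\}_N=\{a,b,c\}_N\cdot_N d+c\cdot_N\{a,b,d\}_N,\qquad a,b,c,d\in A.
\]
For this I would expand both sides using the definitions of $\cdot_N$ and $\{~,~,~\}_N$, obtaining a large expression in which every term is an iterated application of $N$ to nested brackets and products of the original $\cdot$ and $\{~,~,~\}$. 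The key tools are: the original Leibniz rule (\ref{leibniz-rule-added}) to move a $\cdot$-product past a $3$-bracket, and the two Nijenhuis identities (\ref{nij-com}), (\ref{nij-3-lie}) (together with their consequences obtained by applying $N$, cf.\ (\ref{nij-defor1})--(\ref{nij-defor5})) to collapse the products $N(\cdot)\cdot N(\cdot)$ and the triple brackets $\{N(\cdot),N(\cdot),N(\cdot)\}$. A cleaner bookkeeping device is to note that $N$ being Nijenhuis on $(A,\cdot)$ is equivalent to $N\colon (A,\cdot_N)\to(A,\cdot)$ being an algebra homomorphism with $N(a\cdot_N b)=N(a)\cdot N(b)$, and similarly $N(\{a,b,c\}_N)=\{N(a),N(b),N(c)\}$; pulling everything through $N$ reduces the desired identity, after using the original Leibniz rule on the $N$-images, to an identity that holds because $\cdot_N$ and $\{~,~,~\}_N$ are already known to satisfy their own axioms.

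The homomorphism statement is then immediate: $N(a\cdot_N b)=N(a)\cdot N(b)$ is exactly (\ref{nij-com}) rearranged, and $N(\{a,b,c\}_N)=\{N(a),N(b),N(c)\}$ is exactly (\ref{nij-3-lie}) rearranged, so $N\colon A_N\to A$ preserves both operations.

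The main obstacle I anticipate is purely computational: the Leibniz rule for $A_N$, when written out in full, involves on the order of a dozen terms on each side, each a composite of $N$'s, $\cdot$'s and $\{~,~,~\}$'s, and matching them requires careful and repeated use of both the original Leibniz rule and the two Nijenhuis identities. The conceptual content is light — there is no obstruction theory or cohomology needed here — but organizing the cancellation transparently (for instance by first verifying the identity on the "top-degree in $N$" terms, where it reduces to $N(\{a,b,c\cdot_N d\}_N)=N(\{a,b,c\}_N\cdot_N d+c\cdot_N\{a,b,d\}_N)$ via the homomorphism property, and then peeling off lower-order-in-$N$ contributions) is where the real work lies. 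I would present the verification in the homomorphism-pulled-back form to keep the algebra manageable.
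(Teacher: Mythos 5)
Your decomposition is exactly the paper's: cite \cite{Carinena} for $(A,\cdot_N)$ being commutative associative, cite \cite{LiuShengZhouBai} for $(A,\{~,~,~\}_N)$ being a $3$-Lie algebra, verify the Leibniz rule for $A_N$ by a direct (long) expansion using the original Leibniz rule and the two Nijenhuis identities, and read off the homomorphism property $N(a\cdot_N b)=N(a)\cdot N(b)$, $N(\{a,b,c\}_N)=\{N(a),N(b),N(c)\}$ directly from (\ref{nij-com}) and (\ref{nij-3-lie}). One caution, though, about your proposed ``cleaner bookkeeping device'': applying $N$ to both sides of the desired identity $\{a,b,c\cdot_N d\}_N=\{a,b,c\}_N\cdot_N d+c\cdot_N\{a,b,d\}_N$ and checking that the $N$-images agree (which they do, by the homomorphism property and the original Leibniz rule) proves only that the difference of the two sides lies in $\mathrm{Ker}\,N$; since $N$ is not assumed injective, this cannot replace the term-by-term cancellation. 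Your own fallback --- the full expansion, organized by degree in $N$ --- is what actually closes the argument, and it is also what the paper does (it simply asserts the ``direct but long calculation''); so present that computation rather than the pulled-back form.
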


\begin{proof}
    Since $N$ is a Nijenhuis operator on the commutative associative algebra $(A, ~ \! \cdot ~ \! )$, it follows that $(A, ~ \! \cdot_N ~ \!)$ is also a commutative associative algebra \cite{Carinena}. Also, $N$ is a Nijenhuis operator on the $3$-Lie algebra $(A, \{ ~, ~, ~ \})$ implies that $(A, \{ ~,~, ~ \}_N)$ is also a $3$-Lie algebra \cite{LiuShengZhouBai}. Next, a direct (but long) calculation shows that the Leibniz rule
    \begin{align*}
        \{ a, b, c \cdot_N d \}_N = \{ a, b, c \}_N \cdot_N d + c \cdot_N \{ a, b, d \}_N, \text{ for } a, b, c , d \in A
    \end{align*}
    holds, which shows that $(A, ~ \! \cdot_N ~ \! , \{ ~, ~, ~\}_N)$ is a Nambu-Poisson algebra. Finally, the last part follows from the definition of the Nijenhuis operator.
\end{proof}

The Nambu-Poisson algebra $(A, ~ \! \cdot_N ~ \! , \{ ~, ~, ~\}_N)$ constructed in the previous theorem is said to be the {\em deformed Nambu-Poisson algebra} (induced by the Nijenhuis operator $N$). To understand some tedious calculations present in the above result, we introduce the notion of an NS-Nambu-Poisson algebra in the next section. 

\section{NS-Nambu-Poisson algebras}\label{section-4}

In this section, we introduce the notion of an NS-Nambu-Poisson algebra and study some basic properties. We observe that an NS-Nambu-Poisson algebra naturally induces a subadjacent Nambu-Poisson algebra. On the other hand, we show that any Nijenhuis operator on a Nambu-Poisson algebra gives rise to an NS-Nambu-Poisson algebra structure.  

\begin{defi}\label{BaishyaDas,leroux}
    An {\bf NS-commutative algebra} is a triple $(A, \diamond , *)$ consisting of a vector space $A$ endowed with two bilinear operations $\diamond, * : A \times A \rightarrow A$ in which $*$ is commutative and the following conditions hold:
    \begin{align*}
        a \diamond (b \diamond c) =~& (a \odot b) \diamond c,\\
        a \diamond (b * c) + a * (b \odot c) =~& b \diamond (a * c) + b * (a \odot c),
    \end{align*}
    for all $a, b, c \in A$. Here, we have used the notation 
    \begin{align}\label{operation-odot}
        a \odot b = a \diamond b + b \diamond a + a * b.
    \end{align}
\end{defi}

In the above definition, if the operation $\diamond$ is trivial, then $(A, *)$ becomes a commutative associative algebra. On the other hand, if $*$ is trivial, then $(A, \diamond)$ turns out to be a zinbiel algebra.

The following result is well-known \cite{uchino,BaishyaDas}.

\begin{pro}\label{prop-total-nsc}
    Let $(A, \diamond, *)$ be an NS-commutative algebra. 
    \begin{itemize}
        \item[(i)] Then $(A, \odot)$ is a commutative associative algebra, called the subadjacent algebra.
        \item[(ii)] We define a linear map $\mu_\diamond : A \rightarrow \mathrm{End}(A)$ by $\mu_\diamond (a) b := a \diamond b$, for $a, b \in A$. Then $(A; \mu_\diamond)$ 
     is a representation of the subadjacent algebra $(A, \odot)$. Moreover, the bilinear map 
     \begin{align*}
        \varphi_* : A \times A \rightarrow A ~~~ \text{ given by } ~~~  \varphi_* (a , b) = a * b, \text{ for } a, b \in A
     \end{align*}
     is a Harrison $2$-cocycle of $(A, \odot)$ with coefficients in the representation $(A; \mu_\diamond)$.
    \end{itemize}
\end{pro}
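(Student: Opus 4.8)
The plan is to funnel all three assertions through a single symmetry observation. For $a,b,c \in A$ set
\[
E(a,b,c) := a \diamond (b * c) + a * (b \odot c).
\]
Since $*$ is commutative, $\odot$ is commutative by (\ref{operation-odot}), so $b * c = c * b$ and $b \odot c = c \odot b$; hence $E(a,b,c) = E(a,c,b)$, i.e.\ $E$ is symmetric in its last two arguments. The second defining identity of an NS-commutative algebra is precisely $E(a,b,c) = E(b,a,c)$. Combining the two, $E$ is totally symmetric in $a,b,c$. This is the only nonformal ingredient; everything else is bookkeeping.

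For part (i), commutativity of $\odot$ is immediate from (\ref{operation-odot}) together with commutativity of $*$. For associativity I would expand $(a \odot b) \odot c$ and $a \odot (b \odot c)$ using (\ref{operation-odot}), and then rewrite every term of the shape $x \diamond (y \diamond z)$ as $(x \odot y) \diamond z$ by the first NS-commutative identity. Using commutativity of $\odot$ and of $*$ to match the resulting ``$\diamond\diamond$'' terms, one finds that
\[
(a \odot b) \odot c - a \odot (b \odot c) = \bigl( c \diamond (a * b) + c * (a \odot b) \bigr) - \bigl( a \diamond (b * c) + a * (b \odot c) \bigr) = E(c,a,b) - E(a,b,c),
\]
which vanishes by the total symmetry of $E$. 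Thus $(A, \odot)$ is a commutative associative algebra.

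For part (ii), the equality $\mu_\diamond(a \odot b) = \mu_\diamond(a) \mu_\diamond(b)$ applied to an element $c$ reads $(a \odot b) \diamond c = a \diamond (b \diamond c)$, which is exactly the first NS-commutative identity; hence $(A; \mu_\diamond)$ is a representation of the subadjacent algebra. Symmetry of $\varphi_*$ is just commutativity of $*$, and the Harrison $2$-cocycle identity
\[
\mu_\diamond(a) \varphi_*(b,c) - \varphi_*(a \odot b, c) + \varphi_*(a, b \odot c) - \mu_\diamond(c) \varphi_*(a,b) = 0
\]
becomes, after expanding and using $(a \odot b) * c = c * (a \odot b)$, the identity $E(a,b,c) = E(c,a,b)$, already established. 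Since the proposition is classical (\cite{uchino,BaishyaDas}), I would only record these reductions and leave the explicit term-chasing to the reader.

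The only real obstacle is organizational: when $(a \odot b) \odot c$ and $a \odot (b \odot c)$ are fully expanded via (\ref{operation-odot}) there are about a dozen summands, and one must rewrite in the right order --- first eliminating the $\diamond\diamond$ terms with the first axiom, then matching the remainder using commutativity of $\odot$ and $*$ --- so that precisely the difference $E(c,a,b) - E(a,b,c)$ survives. There is no conceptual difficulty once $E$ and its total symmetry are isolated.
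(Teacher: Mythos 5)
Your proposal is correct. Note that the paper itself gives no proof of this proposition --- it is stated as well-known with references to \cite{uchino,BaishyaDas} --- so there is no in-paper argument to compare against; your write-up supplies a complete verification. The organizing device of the trilinear form $E(a,b,c)=a\diamond(b*c)+a*(b\odot c)$ is sound: symmetry in the last two slots follows from commutativity of $*$ and $\odot$, symmetry in the first two slots is exactly the second NS-commutative axiom, and the two transpositions generate full $S_3$-symmetry. I checked the expansion of $(a\odot b)\odot c - a\odot(b\odot c)$: after rewriting the four terms of the form $x\diamond(y\diamond z)$ via the first axiom and cancelling using commutativity of $\odot$, the residue is indeed $E(c,a,b)-E(a,b,c)$, and the Harrison $2$-cocycle identity for $\varphi_*$ likewise reduces to $E(a,b,c)=E(c,a,b)$. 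The representation claim is, as you say, a literal restatement of the first axiom. No gaps.
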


\begin{defi}
    An {\bf NS-$3$-Lie algebra} is a triple $(A, [~, ~, ~ ], \llbracket ~, ~, ~ \rrbracket)$ of a vector space $A$ with trilinear operations $[~,~,~],\llbracket ~, ~, ~ \rrbracket : A \times A \times A \rightarrow A$ in which $[~,~, ~ ]$ is skew-symmetric on the first two arguments and $\llbracket ~, ~, ~ \rrbracket$ is completely skew-symmetric such that for $a, b, c, d, e \in A$, the following set of identities hold:
    \begin{align*}
        [a, b, [c, d, e ]] =~& [ \{ \! \! \{ a, b, c \} \! \! \}, d, e] + [c, \{ \! \! \{ a, b, d \} \! \! \}, e] + [c, d, [a, b, e]], \\
        [ \{ \! \! \{ a, b, c \} \! \! \}, d, e] =~& [a, b, [c, d, e]] + [b, c, [a, d, e]] + [c, a, [b,d,e]], \\
        \llbracket a, b, \{ \! \! \{ c, d, e \} \! \! \} \rrbracket + [a, b, \llbracket c, d, e \rrbracket ] =~& \llbracket  \{ \! \! \{ a, b, c    \} \! \! \}, d, e \rrbracket + \llbracket c,  \{ \! \! \{ a, b, d \} \! \! \}, e \rrbracket 
        + \llbracket c, d,  \{ \! \! \{ a, b, e \} \! \! \} \rrbracket\\ &+ [ d, e, \llbracket a, b, c \rrbracket ] + [ e, c, \llbracket a, b, d \rrbracket ] + [ c, d, \llbracket a, b, e \rrbracket ].
    \end{align*}
    Here, we have used the notation 
    \begin{align}\label{total-3lie}
         \{ \! \! \{ a, b, c \} \! \! \} = [a, b, c ] + [b, c, a]+ [c, a, b] + \llbracket a, b, c \rrbracket.
    \end{align}
\end{defi}

In the above definition, if $[~,~,~]$ is trivial then $(A, \llbracket ~, ~, ~ \rrbracket)$ becomes a $3$-Lie algebra. On the other hand, if the operation $\llbracket ~, ~, ~ \rrbracket$ is trivial, then $(A, [~,~,~])$ turns out to be a pre-$3$-Lie algebra in the sense of \cite{jha}. Thus, NS-$3$-Lie algebras unify both $3$-Lie algebras and pre-$3$-Lie algebras.

The following result has been proved in \cite{ChtiouiHajjajiMabroukMakhlouf,Hou-Sheng-NSLie}.

\begin{pro}\label{prop-total-ns3}
    Let $(A, [~, ~, ~ ], \llbracket ~, ~, ~ \rrbracket)$ be an NS-$3$-Lie algebra. 
    \begin{itemize}
        \item[(i)] Then $(A, \{ \! \! \{ ~, ~, ~ \} \! \! \})$ is a $3$-Lie algebra, where $\{ \! \! \{ ~, ~, ~\} \! \! \}$ is given in (\ref{total-3lie}). (This is called the subadjacent $3$-Lie algebra)
        \item[(ii)] We define a skew-symmetric bilinear map $\rho_{[~, ~, ~]} : A \times A \rightarrow \mathrm{End}(A) $ and a skew-symmetric trilinear map $\psi_{ \llbracket ~, ~, ~ \rrbracket } : A \times A \times A \rightarrow A$ by
        \begin{align*}
            (\rho_{[~, ~, ~]} (a, b)) c = [a, b, c ] ~~~  \text{ and } ~~~ \psi_{\llbracket ~, ~, ~ \rrbracket } (a, b, c) = \llbracket a, b, c \rrbracket, \text{ for } a, b, c \in A.
        \end{align*}
        Then $(A; \rho_{[~, ~, ~]})$ is a representation of the subadjacent $3$-Lie algebra $(A, \{ \! \! \{ ~, ~, ~ \} \! \! \})$. Additionally, the map $\psi_{\llbracket ~, ~, ~ \rrbracket}$ is a $2$-cocycle of $(A, \{ \! \! \{ ~, ~, ~ \} \! \! \})$ with coefficients in the representation $(A; \rho_{[~, ~, ~]})$.
    \end{itemize}
\end{pro}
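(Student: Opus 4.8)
Since part (ii) presupposes part (i) — it asserts that certain data form a \emph{representation} of, and a \emph{$2$-cocycle} on, the $3$-Lie algebra $(A,\{\!\!\{~,~,~\}\!\!\})$ — the plan is to establish part (i) first and then read off part (ii) by unwinding definitions. For part (i) one must check that $\{\!\!\{~,~,~\}\!\!\}$ is completely skew-symmetric and satisfies the fundamental identity \eqref{funda-iden}. The skew-symmetry is elementary: for any trilinear $[~,~,~]$ skew-symmetric in its first two slots, the cyclic sum $(a,b,c)\mapsto [a,b,c]+[b,c,a]+[c,a,b]$ is \emph{totally} skew-symmetric — it is visibly cyclic, and termwise use of $[x,y,z]=-[y,x,z]$ shows it changes sign under the transposition of the first two entries, hence under every transposition — so adding the completely skew-symmetric $\llbracket~,~,~\rrbracket$ shows that $\{\!\!\{~,~,~\}\!\!\}$ is completely skew-symmetric.

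For the fundamental identity I would organize the computation around an operator reformulation. Put $\mathfrak{L}(a,b)c:=[a,b,c]$, so that $\{\!\!\{a,b,c\}\!\!\}=\mathfrak{L}(a,b)c+\mathfrak{L}(b,c)a+\mathfrak{L}(c,a)b+\llbracket a,b,c\rrbracket$. Applying both sides to an element, the first two defining identities of an NS-$3$-Lie algebra become the operator relations
\begin{align*}
  \mathfrak{L}(a,b)\mathfrak{L}(c,d)-\mathfrak{L}(c,d)\mathfrak{L}(a,b) &= \mathfrak{L}(\{\!\!\{a,b,c\}\!\!\},d)+\mathfrak{L}(c,\{\!\!\{a,b,d\}\!\!\}),\\
  \mathfrak{L}(\{\!\!\{a,b,c\}\!\!\},d) &= \mathfrak{L}(a,b)\mathfrak{L}(c,d)+\mathfrak{L}(b,c)\mathfrak{L}(a,d)+\mathfrak{L}(c,a)\mathfrak{L}(b,d),
\end{align*}
and the third becomes the ``cocycle'' relation for $\llbracket~,~,~\rrbracket$ (the third displayed identity in the definition of an NS-$3$-Lie algebra, read with $\mathfrak{L}$ in place of $\rho$). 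Now expand both sides of $\{\!\!\{a,b,\{\!\!\{c,d,e\}\!\!\}\}\!\!\}=\{\!\!\{\{\!\!\{a,b,c\}\!\!\},d,e\}\!\!\}+\{\!\!\{c,\{\!\!\{a,b,d\}\!\!\},e\}\!\!\}+\{\!\!\{c,d,\{\!\!\{a,b,e\}\!\!\}\}\!\!\}$ using this formula, keeping the nested occurrences of $\{\!\!\{~,~,~\}\!\!\}$ opaque, and sort the resulting terms by the structural position of the inner bracket (first, second or third slot of an outer $[~,~,~]$, or inside the outer $\llbracket~,~,~\rrbracket$). When $\llbracket~,~,~\rrbracket\equiv 0$ the two operator relations above reduce to the axioms of a pre-$3$-Lie algebra and the identity to be proved is the known fact that its subadjacent bracket is $3$-Lie (cf.\ \cite{jha}), which can be quoted rather than redone; in the general case one must additionally dispose of all terms carrying one or two factors of $\llbracket~,~,~\rrbracket$ — including doubly nested ones such as $\llbracket\cdot,\cdot,\llbracket\cdot,\cdot,\cdot\rrbracket\rrbracket$ and $\llbracket\llbracket\cdot,\cdot,\cdot\rrbracket,\cdot,\cdot\rrbracket$ — and these cancel by the third defining identity together with the instances of it obtained by permuting $a,b,c,d,e$ and using the skew-symmetries already established.

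Part (ii) is then immediate. With $\rho:=\rho_{[~,~,~]}$, applying \eqref{repn-1} and the second identity in the definition of a representation of a $3$-Lie algebra to an element and using $\rho(a,b)c=[a,b,c]$ turns them, verbatim, into the first two defining identities of an NS-$3$-Lie algebra; likewise the $2$-cocycle condition \eqref{2co-3} for $\psi:=\psi_{\llbracket~,~,~\rrbracket}$ with respect to $\rho$ becomes, after the same substitution, the third defining identity. Skew-symmetry of $\rho$ in its two arguments and complete skew-symmetry of $\psi$ follow from the corresponding properties of $[~,~,~]$ and $\llbracket~,~,~\rrbracket$. Hence $(A;\rho_{[~,~,~]})$ is a representation of $(A,\{\!\!\{~,~,~\}\!\!\})$ and $\psi_{\llbracket~,~,~\rrbracket}$ is a $2$-cocycle of it.

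The main obstacle is the fundamental-identity computation in part (i): after expansion it involves a large number of terms of several nesting types, and the delicate point is to match each group of $\llbracket~,~,~\rrbracket$-terms to the correct permuted instance of the third defining identity while tracking the signs introduced by the various skew-symmetries; the doubly nested terms are the most error-prone. Routing everything through the operator reformulation of the first two identities — mirroring the proof of Proposition \ref{3lie-2-co} — is what keeps the bookkeeping under control.
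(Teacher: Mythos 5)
Your argument is sound, but note that the paper does not prove this proposition at all: it is quoted as a known result from \cite{ChtiouiHajjajiMabroukMakhlouf,Hou-Sheng-NSLie}, so you are supplying a verification where the authors supply a citation. Your part (ii) is genuinely immediate and you handle it correctly: reading $\rho_{[~,~,~]}(a,b)c=[a,b,c]$ and $\psi_{\llbracket~,~,~\rrbracket}=\llbracket~,~,~\rrbracket$, the two representation identities and the $2$-cocycle condition \eqref{2co-3} for the subadjacent bracket are \emph{verbatim} the three defining identities of an NS-$3$-Lie algebra, and the skew-symmetry statements are inherited. For part (i), your skew-symmetry argument for the cyclic sum is correct, and the operator reformulation is the right organizing device; the only place where you are asserting rather than proving is the term-by-term cancellation in the fundamental identity, which you outline but do not execute. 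That outline is essentially right, and one small simplification is worth recording: if you keep every nested occurrence of $\{\!\!\{~,~,~\}\!\!\}$ opaque except where an axiom forces you to expand it (namely in the third slot of an outer $[~,~,~]$ or $\llbracket~,~,~\rrbracket$, where only the plain inner brackets are controlled), then the term $[a,b,\llbracket c,d,e\rrbracket]$ produced by that expansion is exactly the extra summand on the left of the third axiom, the terms $\llbracket \{\!\!\{a,b,c\}\!\!\},d,e\rrbracket$ etc.\ cancel as opaque units against the right-hand side, and no doubly nested $\llbracket\cdot,\cdot,\llbracket\cdot,\cdot,\cdot\rrbracket\rrbracket$ terms ever arise; the residual $\llbracket~,~,~\rrbracket$-free bookkeeping is then literally the pre-$3$-Lie computation you propose to quote from \cite{jha}.
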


We are now in a position to define the notion of an NS-Nambu-Poisson algebra.

\begin{defi}
    An {\bf NS-Nambu-Poisson algebra} is a quintuple $(A, \diamond, *, [~,~,~], \llbracket ~, ~, ~ \rrbracket)$, where
    \begin{itemize}
        \item $(A, \diamond, *)$ is an NS-commutative associative algebra,
        \item $(A, [~,~,~], \llbracket ~, ~, ~ \rrbracket)$ is an NS-$3$-Lie algebra,
    \end{itemize}
    such that for all $a, b, c, d \in A$, the following compatibilities hold:
    \begin{align}
        \{ \! \! \{ a, b, c \} \! \! \} \diamond d =~& [a, b, c \diamond d ] - c \diamond [a, b, d ], \label{ns-np1}\\
        [a \odot b,  c, d] =~& a \diamond [b, c, d ] + b \diamond [a, c, d], \label{ns-np2} \\
        \llbracket a, b, c \odot d \rrbracket + [a, b, c * d] =~& c * \{ \! \! \{ a, b, d \} \! \! \} + c \diamond \llbracket a, b, d \rrbracket + \{ \! \! \{ a, b, c \} \! \! \} * d + d \diamond \llbracket a, b, c \rrbracket, \label{ns-np3}
    \end{align}
    where the operations $\odot$ and $\{ \! \! \{ ~, ~, ~ \} \! \! \}$ are given in (\ref{operation-odot}) and (\ref{total-3lie}), respectively.
\end{defi}

\begin{rmk}
    In an NS-Nambu-Poisson algebra  $(A, \diamond, *, [~,~,~], \llbracket ~, ~, ~ \rrbracket)$, if the operations $\diamond$ and $[~,~, ~]$ are trivial, then $(A, *, \llbracket ~,~,~ \rrbracket)$ simply becomes a Nambu-Poisson algebra. On the other hand, if the operations $*$ and $\llbracket ~,~,~ \rrbracket$ are trivial, then $(A, \diamond, [~,~,~])$ becomes a pre-Nambu-Poisson algebra introduced in \cite{HarrathiSendi} in the study of the Nambu-Poisson Yang-Baxter equation. Thus, NS-Nambu-Poisson algebras unify both Nambu-Poisson algebras and pre-Nambu-Poisson algebras.
\end{rmk}

In the following result, we show that an NS-Nambu-Poisson algebra induces a Nambu-Poisson algebra structure, a suitable representation of it and a Nambu-Poisson $2$-cocycle. 

\begin{thm}\label{ns-np-thm-2}
    Let $(A, \diamond, *, [~,~,~], \llbracket ~, ~, ~ \rrbracket)$ be an NS-Nambu-Poisson algebra.
\begin{itemize}
    \item[(i)] Then $(A, \odot, \{ \! \! \{ ~, ~, ~ \} \! \! \})$ is a Nambu-Poisson algebra, where the operations $\odot$ and $\{ \! \! \{ ~, ~, ~ \} \! \! \}$ are given in (\ref{operation-odot}) and (\ref{total-3lie}), respectively. (This is called the subadjacent Nambu-Poisson algebra).
    \item[(ii)] The triple $(A; \mu_\diamond, \rho_{[~,~,~]})$ is a representation of the subadjacent Nambu-Poisson algebra $(A, \odot, \{ \! \! \{ ~, ~, ~ \} \! \! \})$. Moreover, $(\varphi_*, \psi_{\llbracket ~, ~, ~ \rrbracket})$ is a Nambu-Poisson $2$-cocycle of the subadjacent Nambu-Poisson algebra with coefficients in the above representation $(A; \mu_\diamond, \rho_{[~,~,~]})$.
\end{itemize}
\end{thm}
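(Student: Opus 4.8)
The plan is to assemble the statement from the three structural ingredients that are already packaged into clean propositions. For part (i), I would first invoke Proposition \ref{prop-total-nsc}(i) to get that $(A,\odot)$ is a commutative associative algebra, and Proposition \ref{prop-total-ns3}(i) to get that $(A,\{\!\!\{~,~,~\}\!\!\})$ is a $3$-Lie algebra. What then remains for (i) is exactly the Leibniz rule
\begin{align*}
\{\!\!\{a,b,c\odot d\}\!\!\} = \{\!\!\{a,b,c\}\!\!\}\odot d + c \odot \{\!\!\{a,b,d\}\!\!\}, \quad a,b,c,d\in A.
\end{align*}
I would prove this by expanding both sides using the definitions (\ref{operation-odot}) of $\odot$ and (\ref{total-3lie}) of $\{\!\!\{~,~,~\}\!\!\}$, and then organizing the resulting terms so that each matches a term produced by one of the three NS-Nambu-Poisson compatibilities (\ref{ns-np1})--(\ref{ns-np3}). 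Concretely: the terms on the left of the form $[a,b,c\diamond d]$, $[b,a,c\diamond d]$ (after using skew-symmetry of $[~,~,~]$ in the first two slots) and $[a,b,c*d]$ are handled by (\ref{ns-np1}) and (\ref{ns-np3}); the terms arising from $\{\!\!\{c\odot d,a,b\}\!\!\}$-type rearrangements via (\ref{ns-np2}); and the purely $\diamond$-$\llbracket\,\rrbracket$ cross terms via (\ref{ns-np3}) again. This is the ``direct but long calculation'' alluded to in the proof of Theorem \ref{thm-deformed-np}, and it is where the bulk of the work sits.

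For part (ii), the representation claim again splits cleanly: Proposition \ref{prop-total-nsc}(ii) gives that $(A;\mu_\diamond)$ represents $(A,\odot)$ and that $\varphi_*$ is a Harrison $2$-cocycle; Proposition \ref{prop-total-ns3}(ii) gives that $(A;\rho_{[~,~,~]})$ represents the subadjacent $3$-Lie algebra and that $\psi_{\llbracket~,~,~\rrbracket}$ is a $3$-Lie $2$-cocycle. To upgrade $(A;\mu_\diamond,\rho_{[~,~,~]})$ to a representation of the Nambu-Poisson algebra I only need to check the two mixed identities in (\ref{np3-rep}), namely $\mu_\diamond(\{\!\!\{a,b,c\}\!\!\}) = \rho_{[~,~,~]}(a,b)\mu_\diamond(c) - \mu_\diamond(c)\rho_{[~,~,~]}(a,b)$ and $\rho_{[~,~,~]}(a,b\odot c) = \mu_\diamond(b)\rho_{[~,~,~]}(a,c) + \mu_\diamond(c)\rho_{[~,~,~]}(a,b)$. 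Evaluating the first on an arbitrary $d\in A$ turns it into $\{\!\!\{a,b,c\}\!\!\}\diamond d = [a,b,c\diamond d] - c\diamond[a,b,d]$, which is precisely (\ref{ns-np1}); evaluating the second on $d$ turns it into a consequence of expanding $b\odot c$ and applying (\ref{ns-np2}) together with the definition of $\odot$. Finally, for the $2$-cocycle claim I would verify the single compatibility (\ref{2co-np}) relating $\varphi_*$ and $\psi_{\llbracket~,~,~\rrbracket}$; unwinding the notation, this identity becomes exactly (\ref{ns-np3}). So part (ii) reduces entirely to matching (\ref{np3-rep}) and (\ref{2co-np}) against (\ref{ns-np1})--(\ref{ns-np3}) after evaluating operator identities on a test element.

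The main obstacle is the Leibniz-rule verification in part (i): unlike the operator-identity checks in (ii), which collapse to the NS-axioms almost verbatim, the Leibniz rule genuinely requires expanding $\{\!\!\{~,~,~\}\!\!\}$ into its four cyclic-plus-$\llbracket\,\rrbracket$ summands on both sides and carefully bookkeeping the roughly dozen resulting terms against the three compatibility axioms (using skew-symmetry of $[~,~,~]$ in its first two arguments to reconcile term orderings). I would present this as a displayed multi-line computation, grouping terms by which axiom consumes them, rather than term-by-term. An alternative, slicker route is to deduce (i) directly from the fact (developed elsewhere in the paper) that an NS-Nambu-Poisson structure is exactly what a Nijenhuis operator produces, so that $(A,\odot,\{\!\!\{~,~,~\}\!\!\})$ is the subadjacent structure of a deformed Nambu-Poisson algebra and hence automatically satisfies the Leibniz rule by Theorem \ref{thm-deformed-np}; but since the theorem is stated intrinsically, I would give the direct computational proof and merely remark on this conceptual shortcut.
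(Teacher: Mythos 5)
Your proposal is correct and follows essentially the same route as the paper: part (i) cites the NS-commutative and NS-$3$-Lie subadjacent results and then verifies the Leibniz rule by expanding $\odot$ and $\{\!\!\{~,~,~\}\!\!\}$ against the compatibilities (\ref{ns-np1})--(\ref{ns-np3}), and part (ii) reduces the two mixed identities of (\ref{np3-rep}) to (\ref{ns-np1}) and (\ref{ns-np2}) and the cocycle condition (\ref{2co-np}) to (\ref{ns-np3}), exactly as in the paper. Your decision to give the direct computation rather than the Nijenhuis-operator shortcut is also the right call, since not every NS-Nambu-Poisson algebra is known a priori to arise from a Nijenhuis operator.
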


\begin{proof}
    (i)  Since $(A, \diamond, *)$ is an NS-commutative algebra, it follows from Proposition \ref{prop-total-nsc} (i) that $(A, \odot)$ is a commutative associative algebra. Similarly, $(A, [~,~,~], \llbracket ~, ~, ~ \rrbracket)$ is an NS-$3$-Lie algebra implies that $(A, \{ \! \! \{ ~, ~, ~ \} \! \! \})$ is a $3$-Lie algebra (cf. Proposition \ref{prop-total-ns3} (i)). Thus, it remains to verify the Leibniz rule. For any $a, b, c, d \in A$, we observe that
    \begin{align}\label{leibniz-left-side}
        \{ \! \! \{ a, b, c \odot d \} \! \! \} = [a, b, c \diamond d] + [a, b, d \diamond c] + [a, b, c * d] + [b, c \odot d, a] + [c \odot d, a, b] + \llbracket a, b, c \odot d \rrbracket.
    \end{align}
    On the other hand, we see that
    \begin{align*}
        \{ \! \! \{ a, b, c \} \! \! \} \odot d = \{ \! \! \{ a, b, c \} \! \! \} \diamond d + d \diamond [a, b, c] + d \diamond [b, c, a] + d \diamond [c, a, b] + d \diamond \llbracket a, b, c \rrbracket + \{ \! \! \{ a, b, c \} \! \! \} * d
    \end{align*}
    and
    \begin{align*}
        c \odot \{ \! \! \{ a, b, d \} \! \! \} = c \diamond [a, b, d] + c \diamond [b, d, a] + c \diamond [d, a, b] + c \diamond \llbracket a, b, d \rrbracket + \{ \! \! \{ a, b, d \} \! \! \} \diamond c + c * \{ \! \! \{ a, b, d \} \! \! \}.
    \end{align*}
Hence, by expanding (\ref{leibniz-left-side}) using the identities (\ref{ns-np1}), (\ref{ns-np2}) and (\ref{ns-np3}), we get the Leibniz rule
\begin{align*}
     \{ \! \! \{ a, b, c \odot d \} \! \! \} = \{ \! \! \{ a, b, c \} \! \! \} \odot d  +  c \odot \{ \! \! \{ a, b, d \} \! \! \}.
\end{align*}
This concludes the proof.

    (ii) We have already seen that $(A; \mu_\diamond)$ is a representation of the commutative associative algebra $(A, \odot)$, and $(A; \rho_{[~,~,~]})$ is a representation of the $3$-Lie algebra $(A, \{ \! \! \{ ~, ~, ~ \} \! \! \})$. Thus, to show that $(A; \mu_\diamond, \rho_{[~,~,~]})$ is a representation of the subadjacent Nambu-Poisson algebra, we need to verify the identities in (\ref{np3-rep}). For any $a, b, c, d \in A$, we see that
    \begin{align*}
        \mu_\diamond ( \{ \! \! \{ a, b, c \} \! \! \}) d = \{ \! \! \{ a, b, c \} \! \! \} \diamond d 
        =~& [a, b, c \diamond d ] - c \diamond [a, b, d] \qquad (\text{by } (\ref{ns-np1})) \\
        =~& \big(  (\rho_{[~,~,~]} (a, b)) \mu_\diamond (c) - \mu_\diamond (c)   (\rho_{[~,~,~]} (a, b))  \big) d
    \end{align*}
    and
    \begin{align*}
        ( \rho_{[~,~,~]} (a, b \odot c)) d = [a, b \odot c, d] =~& b \diamond [a, c, d] + c \diamond [a, b, d] \qquad (\text{by }~ (\ref{ns-np2})) \\
        =~& \big(  \mu_\diamond (b) \rho_{[~,~,~]} (a, c) + \mu_\diamond (c) \rho_{[~,~,~]} (a, b)   \big)d.
    \end{align*}
    Hence, this part follows. Similarly, to show that $(\varphi_*, \psi_{\llbracket ~,~, ~ \rrbracket})$ is a Nambu-Poisson $2$-cocycle, we only need to verify the identity (\ref{2co-np}). For this, we observe that
    \begin{align*}
        &\psi_{\llbracket ~, ~, ~ \rrbracket} (a, b, c \odot d) + \rho_{[~,~,~]} (a, b) \varphi_* (c, d) \\
        &=\llbracket a, b, c \odot d \rrbracket + [a, b, c * d ] \\
        &= c* \{ \! \! \{ a, b, d \} \! \! \} + c \diamond \llbracket a, b, d \rrbracket + \{ \! \! \{ a, b, c \} \! \! \} * d + d \diamond \llbracket a, b, c \rrbracket \qquad (\text{by }~ (\ref{ns-np3})) \\
        &= \varphi_* (a, \{ \! \! \{ a, b, d \} \! \! \}) + \mu_\diamond (c) \psi_{\llbracket ~, ~, ~ \rrbracket} (a, b, d) + \varphi_* ( \{ \! \! \{ a, b, c \} \! \! \}, d) + \mu_\diamond (d) \psi_{\llbracket ~, ~, ~ \rrbracket} (a, b, c).
    \end{align*}
    Hence, this part also follows. This completes the proof.
\end{proof}

We will now show that NS-Nambu-Poisson algebras arise naturally from Nijenhuis operators on Nambu-Poisson algebras.

\begin{thm}\label{thm-nij-to-ns}
    Let $(A, ~ \! \cdot ~ \! , \{ ~,~,~ \})$ be a Nambu-Poisson algebra and $N: A \rightarrow A$ be a Nijenhuis operator on it. We define operations
    \begin{align*}
       & a \diamond_N b := N(a) \cdot b, \quad a *_N b := - N (a \cdot b), \quad [a, b, c]_N := \{ N(a), N(b), c \}\\
        &~~ \text{ and } ~~ \llbracket a, b, c \rrbracket_N := - N ( \{ N(a), b, c \} + \{ a, N(b), c \} + \{ a, b, N (c) \} - N \{ a, b, c \} ),
    \end{align*}
    for $a, b, c \in A.$ Then $(A, \diamond_N, *_N, [~,~,~]_N, \llbracket ~,~,~\rrbracket_N)$ is an NS-Nambu-Poisson algebra. Moreover, the corresponding subadjacent Nambu-Poisson algebra structure coincides with the deformed Nambu-Poisson algebra structure $A_N = (A, \cdot_N, \{ ~, ~, ~ \}_N)$ given in Theorem \ref{thm-deformed-np}.
\end{thm}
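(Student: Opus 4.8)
The plan is to verify the defining axioms of an NS-Nambu-Poisson algebra for the quintuple $(A, \diamond_N, *_N, [~,~,~]_N, \llbracket ~,~,~\rrbracket_N)$, and then identify the subadjacent structure. The key observation that organizes the whole argument is that the auxiliary operations $\odot$ and $\{\!\!\{~,~,~\}\!\!\}$ built from these new operations collapse to the deformed operations of Theorem \ref{thm-deformed-np}: indeed, $a \odot b = a \diamond_N b + b \diamond_N a + a *_N b = N(a)\cdot b + b\cdot N(a) - N(a\cdot b) = a \cdot_N b$ by commutativity of $\cdot$, and similarly $\{\!\!\{a,b,c\}\!\!\} = [a,b,c]_N + [b,c,a]_N + [c,a,b]_N + \llbracket a,b,c\rrbracket_N = \{a,b,c\}_N$ after unwinding the skew-symmetry of the $3$-Lie bracket. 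Once this is in place, the ``moreover'' clause about the subadjacent Nambu-Poisson algebra is immediate, and — more importantly — the two auxiliary operations appearing throughout the NS-axioms can be replaced everywhere by $\cdot_N$ and $\{~,~,~\}_N$, which are already known (from Theorem \ref{thm-deformed-np}) to form a Nambu-Poisson algebra.

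First I would record that $(A, \diamond_N, *_N)$ is an NS-commutative algebra and $(A, [~,~,~]_N, \llbracket~,~,~\rrbracket_N)$ is an NS-$3$-Lie algebra. These are precisely the statements that the given Nijenhuis operator $N$ induces an NS-commutative structure on the commutative associative algebra $(A, \cdot)$ and an NS-$3$-Lie structure on the $3$-Lie algebra $(A, \{~,~,~\})$; both are known in the literature cited in the paper (\cite{Carinena,leroux} for the commutative case, \cite{ChtiouiHajjajiMabroukMakhlouf,Hou-Sheng-NSLie,LiuShengZhouBai} for the $3$-Lie case), so I would invoke them rather than reprove them. This reduces the theorem to verifying the three compatibility identities (\ref{ns-np1}), (\ref{ns-np2}), (\ref{ns-np3}) linking the commutative and $3$-Lie halves.

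For the compatibilities I would substitute the definitions and use the two identities in the Leibniz rule (\ref{leibniz-rule-added}) for the original Nambu-Poisson algebra, together with the Nijenhuis relations (\ref{nij-com}) and (\ref{nij-3-lie}) when powers of $N$ need to be moved across the bracket. Concretely: for (\ref{ns-np1}), I would expand $[a,b,c\diamond_N d]_N - c\diamond_N[a,b,d]_N = \{N(a),N(b),N(c)\cdot d\} - N(c)\cdot\{N(a),N(b),d\}$, apply the Leibniz rule to the first term to get $\{N(a),N(b),N(c)\}\cdot d + N(c)\cdot\{N(a),N(b),d\}$, so the middle terms cancel and we are left with $\{N(a),N(b),N(c)\}\cdot d$; then use the Nijenhuis identity (\ref{nij-3-lie}) in the form $\{N(a),N(b),N(c)\} = N(\{\!\!\{a,b,c\}\!\!\}) $... more precisely $\{N(a),N(b),N(c)\} = N(\{a,b,c\}_N)$, and since $N(\{a,b,c\}_N) = \{\!\!\{a,b,c\}\!\!\}\diamond_N$-type expression — I would match this against $\{\!\!\{a,b,c\}\!\!\}\diamond_N d = N(\{a,b,c\}_N)\cdot d$, which requires exactly the relation $\{N(a),N(b),N(c)\} = N(\{a,b,c\}_N)$ that is part of the hypothesis that $N$ is a Nijenhuis operator on the $3$-Lie algebra. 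Identity (\ref{ns-np2}) is the ``pure $3$-Lie'' half of the Leibniz-type compatibility and follows similarly from fundamental identity manipulations plus the Nijenhuis relation, while (\ref{ns-np3}) is the genuinely mixed one and will require combining the Leibniz rule of $(A,\cdot,\{~,~,~\})$ with both Nijenhuis conditions.

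The main obstacle I anticipate is identity (\ref{ns-np3}): it mixes $\diamond_N$, $*_N$, $[~,~,~]_N$, and $\llbracket~,~,~\rrbracket_N$ — the last of which already carries a nested $N$ and an alternating sum — so after substitution one gets a large expression in which many terms involve $N$ applied to brackets of mixed $N$-weights. The clean way to handle it, which I would adopt, is to avoid brute expansion: instead, observe that $(\varphi_{*_N}, \psi_{\llbracket~,~,~\rrbracket_N})$ ought to be the Nambu-Poisson $2$-cocycle $(\varphi_f, \psi_f)$ of Lemma \ref{2co-lemma} associated to $f = -N$ but with respect to the \emph{deformed} representation structures $\mu_{\diamond_N}, \rho_{[~,~,~]_N}$ — equivalently, that the compatibility (\ref{ns-np3}) is exactly the compatibility (\ref{2co-np}) for this cocycle, which holds automatically once one checks that $(A;\mu_{\diamond_N},\rho_{[~,~,~]_N})$ is a representation of $(A,\cdot_N,\{~,~,~\}_N)$. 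That last check reduces to the identities in (\ref{np3-rep}), which are short consequences of (\ref{ns-np1}) and (\ref{ns-np2}) (already established) exactly as in the proof of Theorem \ref{ns-np-thm-2}(ii). Thus the whole verification can be bootstrapped off the pieces already proved, and the only unavoidable hands-on computation is the cancellation in (\ref{ns-np1}) and (\ref{ns-np2}).
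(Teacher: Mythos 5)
Your treatment of the first two compatibilities and of the ``moreover'' clause matches the paper: you invoke the known NS-commutative and NS-$3$-Lie results from the literature, and your computations for (\ref{ns-np1}) and (\ref{ns-np2}) (Leibniz rule plus $\{N(a),N(b),N(c)\}=N(\{a,b,c\}_N)$, resp.\ $N(a\odot b)=N(a)\cdot N(b)$) are exactly the paper's. The problem is your proposed shortcut for (\ref{ns-np3}), which is where the paper does its one long hands-on cancellation, and which you explicitly decline to carry out.

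The shortcut rests on the claim that $(\varphi_{*_N},\psi_{\llbracket~,~,~\rrbracket_N})$ is the coboundary $(\varphi_f,\psi_f)$ of Lemma \ref{2co-lemma} for $f=-N$ taken with respect to the deformed structures, so that the compatibility (\ref{2co-np}) --- which is precisely (\ref{ns-np3}) --- would come for free. This identification is false. Computing the coboundary of $f=-N$ relative to $(A,\cdot_N,\{~,~,~\}_N)$ and $(A;\mu_N,\rho_N)$ and using the Nijenhuis identities $N(a\cdot_N b)=N(a)\cdot N(b)$ and $N(\{a,b,c\}_N)=\{N(a),N(b),N(c)\}$ gives
\begin{align*}
\varphi_{-N}(a,b) &= -N(a)\cdot N(b),\\
\psi_{-N}(a,b,c) &= -2\,\{N(a),N(b),N(c)\},
\end{align*}
whereas $a*_Nb=-N(a\cdot b)$ and $\llbracket a,b,c\rrbracket_N=-N\bigl(\{N(a),b,c\}+\{a,N(b),c\}+\{a,b,N(c)\}-N\{a,b,c\}\bigr)$; these pairs do not agree for a general Nijenhuis operator (they would force $N(a\cdot b)=N(a)\cdot N(b)$, i.e.\ $N$ multiplicative). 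Trying other mixtures (original product with deformed representation, etc.) does not rescue the identification either. Your fallback remark that (\ref{2co-np}) ``holds automatically once one checks that $(A;\mu_{\diamond_N},\rho_{[~,~,~]_N})$ is a representation'' is also not correct: being a representation does not make an arbitrary pair $(\varphi,\psi)$ a $2$-cocycle, and asserting the cocycle property for $(\varphi_{*_N},\psi_{\llbracket~,~,~\rrbracket_N})$ directly is just a restatement of (\ref{ns-np3}) (indeed the paper obtains that cocycle statement, Proposition \ref{nij-2co}, as a \emph{consequence} of this theorem via Theorem \ref{ns-np-thm-2}, not the other way around). So as written the argument for (\ref{ns-np3}) is circular or rests on a false lemma; you would need to carry out the direct expansion --- substituting the definitions, using the Leibniz rule (\ref{leibniz-rule-added}) and both Nijenhuis conditions (\ref{nij-com}) and (\ref{nij-3-lie}) to cancel all terms --- as the paper does.
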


\begin{proof}
Since $N : A \rightarrow A$ is a Nijenhuis operator on the commutative associative algebra $(A, ~ \! \cdot ~ \!)$, it follows that $(A, \diamond_N, *_N)$ is an NS-commutative algebra \cite{BaishyaDas,leroux}. On the other hand, $N: A \rightarrow A$ is a Nijenhuis operator on the $3$-Lie algebra $(A, \{ ~,~,~\})$ implies that the triple $(A, [~,~,~]_N, \llbracket ~,~ ,~ \rrbracket_N)$ is an NS-$3$-Lie algebra \cite{ChtiouiHajjajiMabroukMakhlouf,Hou-Sheng-NSLie}. Thus, it remains to verify the identities (\ref{ns-np1}), (\ref{ns-np2}) and (\ref{ns-np3}). For any $a, b, c, d \in A$, we have
\begin{align*}
    \{ \! \! \{ a, b, c \} \! \! \} \diamond_N d = N ( \{ \! \! \{ a, b, c \} \! \! \}) \cdot d =~& \{ N (a), N (b), N (c) \} \cdot d \\
    =~& \{ N (a), N(b), N(c) \cdot d \} - N (c) \cdot \{ N (a) , N(b), d \} \\
    =~& [a, b, c \diamond_N d ]_N - c \diamond_N [a, b, d]_N,
\end{align*}
\begin{align*}
    [a \odot b, c, d ]_N = \{ N (a \odot b), N (c), d \} =~& \{ N (a) \cdot N (b), N (c), d  \} \\
   =~& N (a) \cdot \{ N(b), N(c), d \} + N (b) \cdot \{ N (a), N(c), d \} \\
   =~& a \diamond_N [b, c, d]_N + b \diamond_N [a, c, d]_N.
\end{align*}
Hence the identities (\ref{ns-np1}) and (\ref{ns-np2}) follow. Finally,
\begin{align*}
    &\llbracket a, b, c \odot d \rrbracket_N + [a, b, c *_N d ]_N - c *_N \{ \! \! \{ a, b, d \} \! \! \} - c \diamond_N \llbracket a, b, d \rrbracket_N - \{ \! \! \{ a, b, c \} \! \! \} *_N d - d \diamond_N \llbracket a, b, c \rrbracket_N \\
   & = -N \big(   \{ N (a), b , c \odot d \} + \{ a, N (b), c \odot d \} + \{ a, b, N (c \odot d) \} - N \{ a, b, c \odot d \} \big) \\
    & \quad - \{ N (a) ,N(b), N (c \cdot d) \} + N (c \cdot \{ \! \! \{ a, b, d \} \! \! \}) + N ( \{ \! \! \{ a, b, c \} \! \! \} \cdot d) \\
    & \quad + N (c) \cdot N ( \{ N (a), b, d \} + \{ a, N (b), d \} + \{ a, b, N (d) \} - N \{ a, b, d \} ) \\
    & \quad + N (d) \cdot N ( \{ N (a), b, c \} + \{ a, N (b), c \} + \{ a, b, N (c) \} - N \{ a,b, c \} ) \\
    &= N \Big( - \{ N (a), b, c \odot d \} - \{ a, N (b), c \odot d \} - \{ a, b, N (c) \cdot N(d) \} + N \{ a, b, c \odot d \} \\
    & \qquad \quad  - \{ N (a), N(b), c \cdot d \} - \{ N (a), b, N (c \cdot d) \} - \{ a, N (b), N (c \cdot d) \} \\
    & \qquad \quad  + N ( \{ N (a), b, c \cdot d \}+ \{ a, N (b), c \cdot d \} + \{ a, b, N (c \cdot d ) \} - N \{ a, b, c \cdot d \} ) + c \cdot \{ \! \! \{ a, b, d \} \! \! \} \\
    & \qquad \quad  + \{ \! \! \{ a, b, c \} \! \! \} \cdot d + N (c) \cdot ( \{ N (a), b, d \} + \{ a, N (b), d \} + \{ a, b, N (d) \}) - N(c) \cdot N (\{ a, b, d \}) \\
    &  \qquad \quad + c \cdot N ( \{ N (a), b, d \} + \{ a, N (b), d \} + \{ a, b, N (d) \} - N \{ a, b, d \}  ) \\
    & \qquad \quad - N ( c \cdot \{ N (a), b, d \} + c \cdot  \{ a, N (b), d \} + c \cdot  \{ a, b, N (d) \} - c \cdot  N \{ a, b, d \})\\
     &  \qquad \quad + N (d) \cdot (\{ N (a), b, c \} + \{ a, N (b), c \} + \{ a, b, N (c) \} ) - N(d) \cdot N (\{ a, b, c \}) \\
      &  \qquad \quad + d \cdot N ( \{ N (a), b, c \} + \{ a, N (b), c \} + \{ a, b, N (c) \} - N \{ a,b, c \} )\\
       &  \qquad \quad - N (d \cdot \{ N (a), b, c \} + d \cdot  \{ a, N (b), c \} + d \cdot  \{ a, b, N (c) \} - d \cdot  N \{ a,b, c \}) \Big).
\end{align*}
In the above expression, one may also expand the terms $N (c) \cdot N (\{ a, b, d \})$ and  $N (d) \cdot N (\{ a, b, c \})$ using the condition (\ref{nij-com}) of the Nijenhuis operator. Subsequently, by using the definition of the operations $\odot$ and $\{ \! \! \{ ~,~,~ \} \! \! \}$, and by using the Leibniz rule (\ref{leibniz-rule-added}) of the given Nambu-Poisson algebra, one observes that all the terms of the above expression are mutually cancelled with each other. Hence the identity (\ref{ns-np3}) also holds and thus $(A, \diamond_N, *_N, [~,~,~]_N, \llbracket ~, ~, ~ \rrbracket_N)$ is an NS-Nambu-Poisson algebra. Finally, for this structure, it is easy to see that $a \odot b = a \cdot_N b$ and $\{ \! \! \{ a, b, c \} \! \! \} = \{ a, b, c \}_N$, for $a, b, c \in A$. Thus, the subadjacent Nambu-Poisson algebra $(A, \odot, \{ \! \! \{ ~, ~, ~ \} \! \! \})$ coincides with the deformed Nambu-Poisson algebra $(A, \cdot_N, \{ ~,~, ~ \}_N)$.
\end{proof}

\begin{rmk}
    Let $(A, ~ \! \cdot ~ \!, \{ ~,~,~ \})$ be a Nambu-Poisson algebra and let $N : A \rightarrow A$ be any Nijenhuis operator on it. Then it follows from Proposition \ref{prop-higher-nij} and Theorem \ref{thm-nij-to-ns} that the tuple $(A, \diamond_{N^k}, *_{N^k}, [~,~,~]_{N^k} , \llbracket ~, ~, ~ \rrbracket_{N^k})$ is an NS-Nambu-Poisson algebra, for any $k \geq 0$.
\end{rmk}

Since a Nijenhuis operator on a Nambu-Poisson algebra induces an NS-Nambu-Poisson algebra structure, we obtain the following result by using Theorem \ref{ns-np-thm-2}.

\begin{pro}\label{nij-2co}
   Let $(A, ~ \! \cdot ~ \! , \{ ~, ~, ~ \})$ be a Nambu-Poisson algebra and $N : A \rightarrow A$ be a Nijenhuis operator on it. We define a linear map $\mu_N : A \rightarrow \mathrm{End}(A)$ and a skew-symmetric bilinear map $\rho_N : A \times A \rightarrow \mathrm{End}(A)$ by
   \begin{align*}
           \mu_N (a) b:= N (a) \cdot b ~~~~ \text{ and } ~~~~ \rho_N (a, b) c:= \{ N (a), N (b), c \}, \text{ for } a, b, c \in A.
       \end{align*}
       Then, the triple $(A; \mu_N, \rho_N)$ is a representation of the deformed Nambu-Poisson algebra $A_N = (A, ~ \! \cdot_N ~ \! , \{ ~, ~, ~ \}_N).$

       We also consider a bilinear map  $\varphi_N : A \times A \rightarrow A$ and a trilinear map $\psi_N : A \times A \times A \rightarrow A$ by
       \begin{align*}
           \varphi_N (a, b) = -N (a \cdot b), \quad \psi_N (a, b, c) = - N \big(  \{ N (a) , b, c \} + \{ a, N (b) , c \} + \{ a, b, N (c) \} - N \{ a, b, c \}  \big),
       \end{align*}
       for $a, b, c \in A$. Then the pair $(\varphi_N, \psi_N)$ is a Nambu-Poisson $2$-cocycle of the deformed Nambu-Poisson algebra $A_N$ with coefficients in the representation $(A; \mu_N, \rho_N)$. 
\end{pro}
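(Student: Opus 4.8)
The proposition asks us to show that the Nijenhuis operator $N$ induces a representation $(A;\mu_N,\rho_N)$ of the deformed Nambu-Poisson algebra $A_N$, together with a Nambu-Poisson $2$-cocycle $(\varphi_N,\psi_N)$ on $A_N$ with coefficients in that representation. The cleanest strategy is to avoid all direct computation by invoking Theorem \ref{thm-nij-to-ns} and Theorem \ref{ns-np-thm-2}. Indeed, by Theorem \ref{thm-nij-to-ns}, the quintuple $(A,\diamond_N,*_N,[~,~,~]_N,\llbracket ~,~,~\rrbracket_N)$ is an NS-Nambu-Poisson algebra whose subadjacent Nambu-Poisson algebra is exactly $A_N$. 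Now apply Theorem \ref{ns-np-thm-2}(ii): for any NS-Nambu-Poisson algebra, the triple $(A;\mu_\diamond,\rho_{[~,~,~]})$ is a representation of the subadjacent Nambu-Poisson algebra, and $(\varphi_*,\psi_{\llbracket ~,~,~\rrbracket})$ is a Nambu-Poisson $2$-cocycle of it with coefficients in that representation. So the whole proof reduces to unwinding the definitions and matching notation.

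First I would record that $\mu_{\diamond_N}(a)b = a\diamond_N b = N(a)\cdot b = \mu_N(a)b$, so $\mu_{\diamond_N}=\mu_N$; and $\rho_{[~,~,~]_N}(a,b)c = [a,b,c]_N = \{N(a),N(b),c\} = \rho_N(a,b)c$, so $\rho_{[~,~,~]_N}=\rho_N$. Hence Theorem \ref{ns-np-thm-2}(ii) applied to the NS-Nambu-Poisson algebra of Theorem \ref{thm-nij-to-ns} immediately gives that $(A;\mu_N,\rho_N)$ is a representation of the subadjacent Nambu-Poisson algebra, which by the last sentence of Theorem \ref{thm-nij-to-ns} is precisely $A_N=(A,~\!\cdot_N~\!,\{~,~,~\}_N)$. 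This settles the first assertion.

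For the second assertion, I would similarly observe that $\varphi_{*_N}(a,b) = a*_N b = -N(a\cdot b) = \varphi_N(a,b)$ and $\psi_{\llbracket ~,~,~\rrbracket_N}(a,b,c) = \llbracket a,b,c\rrbracket_N = -N\big(\{N(a),b,c\}+\{a,N(b),c\}+\{a,b,N(c)\}-N\{a,b,c\}\big) = \psi_N(a,b,c)$. Thus $(\varphi_N,\psi_N) = (\varphi_{*_N},\psi_{\llbracket ~,~,~\rrbracket_N})$, and Theorem \ref{ns-np-thm-2}(ii) tells us this pair is a Nambu-Poisson $2$-cocycle of the subadjacent Nambu-Poisson algebra $A_N$ with coefficients in $(A;\mu_N,\rho_N)$. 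This completes the proof.

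There is essentially no obstacle here: the entire content was already extracted in the two cited theorems, and the only real work is the bookkeeping needed to see that the operations $\mu_N,\rho_N,\varphi_N,\psi_N$ defined in the statement are literally the operations $\mu_\diamond,\rho_{[~,~,~]},\varphi_*,\psi_{\llbracket ~,~,~\rrbracket}$ associated to the NS-Nambu-Poisson structure $(A,\diamond_N,*_N,[~,~,~]_N,\llbracket ~,~,~\rrbracket_N)$. If one preferred a self-contained argument, one could instead verify the representation axioms (\ref{repn-1}), the associative-representation condition, and the two identities in (\ref{np3-rep}) directly for $(\mu_N,\rho_N)$ on $A_N$, and then check (\ref{2co-np}) for $(\varphi_N,\psi_N)$ using the Nijenhuis conditions (\ref{nij-com})–(\ref{nij-3-lie}) and the Leibniz rule (\ref{leibniz-rule-added}); but this would reproduce the long cancellations already hidden inside Theorems \ref{thm-nij-to-ns} and \ref{ns-np-thm-2}, so the structural proof via those two results is clearly preferable.
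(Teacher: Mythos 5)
Your proposal is correct and is exactly the paper's argument: the paper derives Proposition \ref{nij-2co} in one line by combining Theorem \ref{thm-nij-to-ns} (the NS-Nambu-Poisson algebra $(A,\diamond_N,*_N,[~,~,~]_N,\llbracket~,~,~\rrbracket_N)$ with subadjacent structure $A_N$) with Theorem \ref{ns-np-thm-2}(ii), which is precisely your identification $\mu_N=\mu_{\diamond_N}$, $\rho_N=\rho_{[~,~,~]_N}$, $\varphi_N=\varphi_{*_N}$, $\psi_N=\psi_{\llbracket~,~,~\rrbracket_N}$. Nothing is missing.
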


We will return to this result when we define twisted $\mathcal{O}$-operators on a Nambu-Poisson algebra in the next section.

\section{Twisted $\mathcal{O}$-operators on Nambu-Poisson algebras}\label{section-5}

Twisted Rota-Baxter operators and more generally twisted $\mathcal{O}$-operators on associative algebras were first considered in \cite{uchino}. Later, these operators were generalized to Lie algebras \cite{das}, $3$-Lie algebras \cite{ChtiouiHajjajiMabroukMakhlouf,Hou-Sheng-NSLie} and Poisson algebras \cite{BaishyaDas}. In this section, we shall define these operators in the context of Nambu-Poisson algebras. In particular, we show that twisted $\mathcal{O}$-operators on a Nambu-Poisson algebra are closely related to NS-Nambu-Poisson algebras considered in the previous section.

\begin{defi}
Let $(A, ~ \! \cdot ~ \!, \{ ~, ~, ~ \})$ be a Nambu-Poisson algebra and $(V; \mu, \rho)$ be a representation of it. Suppose $(\varphi, \psi)$ is a Nambu-Poisson $2$-cocycle of $A$ with coefficients in the representation $V$. Then a linear map $r : V \rightarrow A$ is said to be a {\bf $(\varphi, \psi)$-twisted $\mathcal{O}$-operator} on $A$ with respect to the representation $V$ if it satisfies
\begin{align}
    r(u) \cdot r(v) =~& r \big( \mu (r(u)) v + \mu (r(v)) u + \varphi (r(u), r (v) ) \big), \label{first-twisted}\\
    \{ r(u), r(v) , r(w) \} =~& r \big(  \rho (r(u), r(v)) w + \rho (r(v), r(w)) u + \rho ( r(w), r (u)) v + \psi (r(u), r(v), r(w)) \big), \label{second-twisted}
\end{align}
for all $u, v, w \in V$.
\end{defi}

It follows from (\ref{first-twisted}) that $r$ is a $\varphi$-twisted $\mathcal{O}$-operator on the commutative associative algebra $(A, ~ \! \cdot ~ \!)$ with respect to the representation $(V; \mu)$. While, the condition (\ref{second-twisted}) means that $r$ is also a $\psi$-twisted $\mathcal{O}$-operator on the $3$-Lie algebra $(A, \{ ~, ~, ~ \})$ with respect to the representation $(V; \rho)$.

\begin{rmk}\label{remark-tw-rota}
\begin{itemize}
    \item[(i)]  When $(V; \mu, \rho) = (A; \mu_\mathrm{ad}, \rho_{\mathrm{ad}})$ is the adjoint representation of the Nambu-Poisson algebra $(A, ~\! \cdot ~ \!, \{ ~, ~, ~\})$, and $(\varphi, \psi)$ is a Nambu-Poisson $2$-cocycle of $A$ with coefficients in the adjoint representation, a $(\varphi, \psi)$-twisted $\mathcal{O}$-operator is simply called a {\bf $(\varphi, \psi)$-twisted Rota-Baxter operator} on the Nambu-Poisson algebra $(A, ~ \! \cdot ~ \!, \{ ~, ~, ~ \})$.
    
    \item[(ii)] The notion of twisted $\mathcal{O}$-operators on a Poisson algebra with respect to a representation was considered in \cite{BaishyaDas} while studying NS-Poisson algebras. We have already seen that fixing a coordinate of the underlying $3$-Lie bracket of a Nambu-Poisson algebra, one obtains a Poisson algebra. A similar result is also obtained for the corresponding representations and $2$-cocycles (cf. Proposition \ref{fixing-co} and Proposition \ref{2co-prop}). Here we shall show that a twisted $\mathcal{O}$-operator on a Nambu-Poisson algebra is also a twisted $\mathcal{O}$-operator on the Poisson algebra (obtained by fixing a coordinate) under some mild condition. More precisely, let $r: V \rightarrow A$ be a $(\varphi, \psi)$-twisted $\mathcal{O}$-operator on the Nambu-Poisson algebra $(A, ~ \! \cdot ~ \!, \{ ~, ~, ~ \})$ with respect to the representation $(V; \mu, \rho)$. Suppose \begin{align*}
        u_0 \in V ~\text{ be such that }~ \rho (r(u), r(v)) u_0 = 0, \text{ for all } u, v \in V
    \end{align*}
    and take $x_0 = r(u_0) \in A$. Then $r: V \rightarrow A$ is also a $(\varphi, \psi_{x_0})$-twisted $\mathcal{O}$-operator on the Poisson algebra $(A, ~ \! \cdot ~ \! , \{ ~,~ \}_{x_0})$ with respect to the representation $(V; \mu, \rho_{x_0})$. To see this, we observe that
     \begin{align*}
        &\{ r(u), r(v) \}_{x_0} \\
        &= \{ x_0, r(u), r(v) \} \\
        &= r \big(   \rho ( r(u_0), r(u)) v+ \rho (r(u), r(v)) u_0 + \rho (r(v), r (u_0)) u + \psi (r(u_0), r(u), r(v))   \big) \\
        &= r \big(  \rho_{x_0} (r(u)) v - \rho_{x_0} (r(v)) u + \psi_{x_0} (r(u), r(v)) \big),
    \end{align*}
    for any $u, v \in V$. Hence, the claim follows.
\end{itemize}
\end{rmk}

In the following result, we characterize twisted $\mathcal{O}$-operators in terms of their graphs.

\begin{pro}
    Let $(A, ~\! \cdot ~ \!, \{ ~, ~, ~\})$ be a Nambu-Poisson algebra, $(V; \mu, \rho)$ be a representation and $(\varphi, \psi)$ be a Nambu-Poisson $2$-cocycle of $A$ with coefficients in $V$. Then a linear map $r : V \rightarrow A$ is a $(\varphi, \psi)$-twisted $\mathcal{O}$-operator on $A$ with respect to the representation $V$ if and only if the graph
    \begin{align*}
        Graph (r) = \{   (r(u), u) ~ \! | ~ \! u \in V \}
    \end{align*}
    is a subalgebra of the $(\varphi, \psi)$-twisted semidirect product Nambu-Poisson algebra $A \ltimes_{ (\varphi, \psi)} V.$
\end{pro}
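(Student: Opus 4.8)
The plan is to unwind the definition of a subalgebra of the twisted semidirect product $A \ltimes_{(\varphi,\psi)} V$ and match the resulting closure conditions term by term with the defining equations (\ref{first-twisted}) and (\ref{second-twisted}) of a $(\varphi,\psi)$-twisted $\mathcal{O}$-operator. First I would observe that since $r$ is linear, $\mathrm{Graph}(r) = \{(r(u),u) \mid u \in V\}$ is automatically a linear subspace of $A \oplus V$; hence it is a subalgebra of $A \ltimes_{(\varphi,\psi)} V$ if and only if it is closed under the operations $\cdot_{\ltimes_\varphi}$ and $\{ ~,~,~ \}_{\ltimes_\psi}$ of (\ref{eqn-semid-2co}) and (\ref{eqn-semid-2co-lie}). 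Here one uses the Proposition preceding Lemma \ref{2co-lemma}, which guarantees that $A \ltimes_{(\varphi,\psi)} V$ is indeed a Nambu-Poisson algebra, precisely because $(\varphi,\psi)$ is a Nambu-Poisson $2$-cocycle.

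Next I would carry out the (routine) computation. For arbitrary $u, v, w \in V$ one has $(r(u),u) \cdot_{\ltimes_\varphi} (r(v),v) = \big( r(u)\cdot r(v) ~\!,~\! \mu(r(u))v + \mu(r(v))u + \varphi(r(u),r(v)) \big)$, and this pair lies in $\mathrm{Graph}(r)$ exactly when its first component is the image under $r$ of its second component, i.e. exactly when (\ref{first-twisted}) holds. Likewise, $\{ (r(u),u),(r(v),v),(r(w),w) \}_{\ltimes_\psi} = \big( \{ r(u),r(v),r(w) \} ~\!,~\! \rho(r(u),r(v))w + \rho(r(v),r(w))u + \rho(r(w),r(u))v + \psi(r(u),r(v),r(w)) \big)$, which lies in $\mathrm{Graph}(r)$ exactly when (\ref{second-twisted}) holds.

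Finally I would assemble the two directions: if $r$ is a $(\varphi,\psi)$-twisted $\mathcal{O}$-operator then both displayed membership conditions hold for all $u,v,w$, so $\mathrm{Graph}(r)$ is closed under both operations and hence a subalgebra; conversely, if $\mathrm{Graph}(r)$ is a subalgebra, then specializing the closure conditions to the generic elements $(r(u),u)$ recovers (\ref{first-twisted}) and (\ref{second-twisted}) verbatim. There is no real obstacle here — the statement is essentially a reformulation of the defining identities in terms of graphs — and the only point deserving a word of care is that the ambient object $A \ltimes_{(\varphi,\psi)} V$ must already be known to be a Nambu-Poisson algebra, which is exactly what the $2$-cocycle hypothesis on $(\varphi,\psi)$ provides.
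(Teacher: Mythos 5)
Your proposal is correct and follows essentially the same route as the paper: compute $(r(u),u)\cdot_{\ltimes_\varphi}(r(v),v)$ and $\{(r(u),u),(r(v),v),(r(w),w)\}_{\ltimes_\psi}$ explicitly and observe that membership in the graph is exactly the pair of defining identities of a $(\varphi,\psi)$-twisted $\mathcal{O}$-operator. The extra remarks on linearity of the graph and on the $2$-cocycle hypothesis guaranteeing that the ambient semidirect product is a Nambu-Poisson algebra are sensible but add nothing beyond the paper's argument.
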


\begin{proof}
Let $(r(u), u), (r(v), v) , (r(w), w) \in Graph (r)$ be arbitrary. Then from (\ref{eqn-semid-2co}) and (\ref{eqn-semid-2co-lie}), we have
\begin{align*}
    (r(u), u) \cdot_{\ltimes_\varphi} (r(v), v) = \big(  r(u) \cdot r(v) ~\! , ~ \! \mu({r(u)}) v + \mu (r(v)) u + \varphi (r(u), r (v)) \big)
\end{align*}
and 
\begin{align*}
    &\{  (r(u), u), (r(v), v) , (r(w), w) \}_{\ltimes_\psi} \\
    & \quad = \big( \{ r(u), r(v), r(w) \} ~\!, ~ \! \rho ( r(u), r(v)) w + \rho (r(v), r (w)) u + \rho (r (w), r(u)) v + \psi (r(u), r(v), r(w))  \big).
\end{align*}
Both of these elements are again in $Graph (r)$ if and only if the identities (\ref{first-twisted}) and (\ref{second-twisted}) hold. Hence, the result follows.
\end{proof}

As a consequence of the above proposition, we have the following result.

\begin{pro}\label{prop-twist-ns}
    Let $r: V \rightarrow A$ be a $(\varphi, \psi)$-twisted $\mathcal{O}$-operator on the Nambu-Poisson algebra $(A, ~ \! \cdot ~ \! , \{ ~, ~, ~ \})$ with respect to the representation $(V; \mu, \rho)$. Then $V$ inherits a Nambu-Poisson algebra structure whose multiplication and the $3$-Lie bracket are respectively given by
    \begin{align*}
        u \cdot_r v :=~& \mu({r(u)}) v + \mu (r(v)) u + \varphi (r(u), r (v)),\\
        \{ u, v, w \}_r :=~& \rho ( r(u), r(v)) w + \rho (r(v), r (w)) u + \rho (r (w), r(u)) v + \psi (r(u), r(v), r(w)),
    \end{align*}
    for $u, v, w \in V$. This structure is said to be induced by $r$. With this structure, $r: V \rightarrow A$ is a homomorphism of Nambu-Poisson algebras.
\end{pro}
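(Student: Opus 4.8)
The plan is to read off the entire statement from the preceding proposition, which identifies $Graph(r)$ as a Nambu-Poisson subalgebra of the twisted semidirect product $A \ltimes_{(\varphi, \psi)} V$, and then to transport that subalgebra structure to $V$ along the obvious linear isomorphism. First I would note that the second-coordinate projection $\pr_V : A \oplus V \to V$ restricts to a linear bijection $\pr_V : Graph(r) \to V$, whose inverse is the map $\iota : V \to Graph(r)$, $\iota(u) = (r(u), u)$. Since $Graph(r)$ is a Nambu-Poisson algebra (being a subalgebra of $A \ltimes_{(\varphi, \psi)} V$ by the above proposition), transporting its operations along $\iota$ endows $V$ with a Nambu-Poisson algebra structure for which $\iota$ is, by construction, an isomorphism; this transport is legitimate precisely because $\pr_V|_{Graph(r)}$ is bijective, and transporting a Nambu-Poisson structure along a linear isomorphism always yields a Nambu-Poisson structure.

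Next I would identify the transported operations explicitly. For $u, v, w \in V$, the transported product is $\pr_V\!\big( (r(u), u) \cdot_{\ltimes_\varphi} (r(v), v) \big)$, which by (\ref{eqn-semid-2co}) equals $\mu(r(u)) v + \mu(r(v)) u + \varphi(r(u), r(v)) = u \cdot_r v$; here the twisted $\mathcal{O}$-operator identity (\ref{first-twisted}) is exactly the assertion that this element of $A \oplus V$ lies in $Graph(r)$, so no inconsistency arises. Likewise, using (\ref{eqn-semid-2co-lie}) together with (\ref{second-twisted}), the transported ternary bracket is $\rho(r(u), r(v)) w + \rho(r(v), r(w)) u + \rho(r(w), r(u)) v + \psi(r(u), r(v), r(w)) = \{ u, v, w \}_r$. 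Hence the inherited structure on $V$ is precisely $(V, \cdot_r, \{ ~, ~, ~ \}_r)$, and it is a Nambu-Poisson algebra.

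Finally, for the homomorphism claim I would factor $r$ as the composite $V \xrightarrow{\ \iota\ } Graph(r) \hookrightarrow A \ltimes_{(\varphi, \psi)} V \xrightarrow{\ \pr_A\ } A$, where $\pr_A : A \oplus V \to A$ is the first-coordinate projection. The map $\iota$ is an isomorphism of Nambu-Poisson algebras by construction; the inclusion of a subalgebra is a homomorphism; and $\pr_A$ is a homomorphism of Nambu-Poisson algebras because the first coordinate of both $\cdot_{\ltimes_\varphi}$ in (\ref{eqn-semid-2co}) and $\{ ~, ~, ~ \}_{\ltimes_\psi}$ in (\ref{eqn-semid-2co-lie}) is the undeformed operation of $A$. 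Therefore $r$ is a composite of Nambu-Poisson algebra homomorphisms and is itself one. I expect no serious obstacle along this route: the only point needing a moment's care is the bijectivity of $\pr_V|_{Graph(r)}$, which is immediate. (One could instead verify the Nambu-Poisson axioms for $(V, \cdot_r, \{~,~,~\}_r)$ by hand — commutativity and associativity of $\cdot_r$ from the Harrison $2$-cocycle condition on $\varphi$ and (\ref{first-twisted}); skew-symmetry and the fundamental identity for $\{~,~,~\}_r$ from (\ref{2co-3}) and (\ref{second-twisted}); and the Leibniz rule from the compatibility (\ref{2co-np}) together with both operator identities — but this is a long computation that the graph argument packages cleanly, and it would be the real labor if one avoided the semidirect product.)
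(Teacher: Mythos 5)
Your proposal is correct and follows essentially the same route as the paper, which states this proposition precisely as a consequence of the graph characterization of twisted $\mathcal{O}$-operators, i.e.\ by transporting the subalgebra structure on $Graph(r) \subset A \ltimes_{(\varphi,\psi)} V$ to $V$ along the bijection $u \mapsto (r(u),u)$. Your explicit factorization of $r$ through the projection $\pr_A$ to verify the homomorphism claim is exactly the detail the paper leaves implicit.
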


In the next result, we show that the Nambu-Poisson algebra obtained in the above proposition can be seen as the subadjacent Nambu-Poisson algebra of an NS-Nambu-Poisson algebra. More precisely, we have the following.

\begin{thm}\label{twisted-NS}
    Let $r: V \rightarrow A$ be a $(\varphi, \psi)$-twisted $\mathcal{O}$-operator on the Nambu-Poisson algebra $(A, ~ \! \cdot ~ \! , \{ ~, ~, ~ \})$ with respect to the representation $(V; \mu, \rho)$. Then $V$ carries an NS-Nambu-Poisson algebra structure $(V, \diamond_r, *_r, [~,~,~]_r, \llbracket ~, ~, ~ \rrbracket_r)$, where
    \begin{align*}
        &u \diamond_r v :=  \mu (r(u)) v, \quad u *_r v := \varphi (r(u), r(v)), \quad [u, v, w ]_r : = \rho (r(u), r(v)) w  \\
       & \qquad \qquad \text{ and } ~ \llbracket u, v, w \rrbracket_r := \psi (r(u), r(v), r(w)), \text{ for } u, v, w \in V.
    \end{align*}
    Moreover, the corresponding subadjacent Nambu-Poisson algebra coincides with the Nambu-Poisson algebra $(V, \cdot_r, \{ ~, ~, ~ \}_r)$ obtained in the previous proposition.
\end{thm}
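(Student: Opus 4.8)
The plan is to verify separately the three structural requirements in the definition of an NS-Nambu-Poisson algebra, and then dispose of the final clause by a short direct comparison of operations.

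First I would observe that condition (\ref{first-twisted}) says precisely that $r$ is a $\varphi$-twisted $\mathcal{O}$-operator on the commutative associative algebra $(A, \cdot)$ with respect to the representation $(V;\mu)$; hence, by the known result for commutative associative algebras \cite{uchino,BaishyaDas,leroux}, the operations $u\diamond_r v=\mu(r(u))v$ and $u*_r v=\varphi(r(u),r(v))$ make $(V,\diamond_r,*_r)$ into an NS-commutative algebra. Likewise, (\ref{second-twisted}) says that $r$ is a $\psi$-twisted $\mathcal{O}$-operator on the $3$-Lie algebra $(A,\{~,~,~\})$ with respect to $(V;\rho)$, so by \cite{ChtiouiHajjajiMabroukMakhlouf,Hou-Sheng-NSLie} the operations $[u,v,w]_r=\rho(r(u),r(v))w$ and $\llbracket u,v,w\rrbracket_r=\psi(r(u),r(v),r(w))$ equip $V$ with an NS-$3$-Lie algebra structure. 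This settles the first two bullet conditions.

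The heart of the argument is the verification of the three compatibility identities (\ref{ns-np1})--(\ref{ns-np3}). The key bookkeeping fact, used throughout, is that the subadjacent operations on $V$ are $u\odot v=u\cdot_r v$ and $\{\!\!\{u,v,w\}\!\!\}=\{u,v,w\}_r$, so that by (\ref{first-twisted}) and (\ref{second-twisted}) one has $r(u\odot v)=r(u)\cdot r(v)$ and $r(\{\!\!\{u,v,w\}\!\!\})=\{r(u),r(v),r(w)\}$. Applying $r$ to the left-hand side of (\ref{ns-np1}) via the latter identity, (\ref{ns-np1}) reduces to $\mu(\{r(u),r(v),r(w)\})=\rho(r(u),r(v))\mu(r(w))-\mu(r(w))\rho(r(u),r(v))$ evaluated on an argument, i.e.\ the first relation in (\ref{np3-rep}). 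Similarly, using (\ref{first-twisted}), identity (\ref{ns-np2}) reduces to $\rho(r(u)\cdot r(v),r(w))=\mu(r(u))\rho(r(v),r(w))+\mu(r(v))\rho(r(u),r(w))$, which follows from the second relation in (\ref{np3-rep}) together with the skew-symmetry of $\rho$.

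For (\ref{ns-np3}) I would expand all five terms using the definitions of $\diamond_r,*_r,[~,~,~]_r,\llbracket ~,~,~\rrbracket_r$ together with $r(w\odot x)=r(w)\cdot r(x)$ and $r(\{\!\!\{u,v,w\}\!\!\})=\{r(u),r(v),r(w)\}$; the resulting identity turns out to be exactly
\begin{align*}
\psi(a,b,c\cdot d)+\rho(a,b)\varphi(c,d)=\varphi(c,\{a,b,d\})+\mu(c)\psi(a,b,d)+\varphi(\{a,b,c\},d)+\mu(d)\psi(a,b,c)
\end{align*}
with $a=r(u)$, $b=r(v)$, $c=r(w)$, $d=r(x)$, which is the compatibility condition (\ref{2co-np}) for the Nambu-Poisson $2$-cocycle $(\varphi,\psi)$ and hence holds by hypothesis. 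This is the only place where the full $2$-cocycle condition is needed, and the only mildly delicate point in the whole proof is keeping careful track of which of (\ref{first-twisted}) and (\ref{second-twisted}) governs each composite argument; no genuine obstacle arises, since the identity collapses straight onto (\ref{2co-np}). Finally, the last assertion is immediate: from (\ref{operation-odot}) and (\ref{total-3lie}) one computes $u\odot v=\mu(r(u))v+\mu(r(v))u+\varphi(r(u),r(v))=u\cdot_r v$ and $\{\!\!\{u,v,w\}\!\!\}=\rho(r(u),r(v))w+\rho(r(v),r(w))u+\rho(r(w),r(u))v+\psi(r(u),r(v),r(w))=\{u,v,w\}_r$, so the subadjacent Nambu-Poisson algebra coincides with $(V,\cdot_r,\{~,~,~\}_r)$ of Proposition \ref{prop-twist-ns}.
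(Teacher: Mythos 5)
Your proposal is correct and follows essentially the same route as the paper's proof: both delegate the NS-commutative and NS-$3$-Lie parts to the known results for twisted $\mathcal{O}$-operators on the two underlying structures, reduce (\ref{ns-np1}) and (\ref{ns-np2}) to the two identities in (\ref{np3-rep}) via $r(u\odot v)=r(u)\cdot r(v)$ and $r(\{\!\!\{u,v,w\}\!\!\})=\{r(u),r(v),r(w)\}$, reduce (\ref{ns-np3}) to the $2$-cocycle compatibility (\ref{2co-np}), and conclude with the same direct comparison of the subadjacent operations with $\cdot_r$ and $\{~,~,~\}_r$.
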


\begin{proof}
    Since $r: V \rightarrow A$ is a $\varphi$-twisted $\mathcal{O}$-operator on the commutative associative algebra $(A, ~ \! \cdot ~ \!)$ with respect to the representation $(V; \mu)$, it follows that $(V, \diamond_r, *_r)$ is an NS-commutative algebra \cite{BaishyaDas}. On the other hand, $r: V \rightarrow A$ is also a $\psi$-twisted $\mathcal{O}$-operator on the $3$-Lie algebra $(A, \{ ~,~, ~ \})$ with respect to the representation $(V; \rho)$ implies that $(V, [~,~,~]_r, \llbracket ~, ~, ~ \rrbracket_r)$ is an NS-$3$-Lie algebra \cite{ChtiouiHajjajiMabroukMakhlouf,Hou-Sheng-NSLie}. Next, for any $u, v, w, w' \in V$, we observe that
    \begin{align*}
        \{ \! \! \{ u, v, w \} \! \! \} \diamond_r w' = \mu \big( r (  \{ \! \! \{ u, v, w \} \! \! \}) \big) w' =~& \mu ( \{ r(u), r(v), r(w) \}) w' \\
        =~& \rho (r(u), r(v)) \mu (r(w)) w' - \mu (r(w)) \rho (r(u), r(v)) w' \\
        =~& [u, v, w \diamond_r w']_r - w \diamond_r [u, v, w']_r,
    \end{align*}
    \begin{align*}
        [u \odot v , w, w']_r = \{ r (u \odot v), r(w), w' \} =~& \{ r(u) \cdot r(v), r(w) , w' \} \\
        =~& r(u) \cdot \{ r(v), r(w), w' \} + r(v) \cdot \{ r(u), r(w), w' \} \\
        =~& u \diamond_r [v, w, w']_r + v \diamond_r [u, w, w']_r.
    \end{align*}
    and finally,
    \begin{align*}
        &\llbracket u, v, w \odot w' \rrbracket_r + [u, v, w *_r w' ]_r \\
        &= \psi (r(u), r(v), r (w \odot w')) + \rho ( r(u), r(v)) (w *_r w') \\
        &= \psi (r(u), r(v), r(w) \cdot r(w')) + \rho ( r(u), r(v)) \varphi (r(w), r(w')) \\
        &= \varphi ( r(w), \{ r(u), r(v), r(w') \}) + \mu(r(w)) \psi ( r(u), r(v), r(w') ) \\
        & \qquad + \varphi ( \{ r(u), r(v) , r(w) \}, r(w') ) + \mu (r(w')) \psi ( r(u), r(v), r(w)) \\
        &= w *_r \{ \! \! \{ u, v, w' \} \! \! \} + w \diamond_r \llbracket u, v, w' \rrbracket_r + \{ \! \! \{ u, v, w \} \! \! \} *_r w' + w' \diamond_r \llbracket u, v, w \rrbracket.
    \end{align*}
    This verifies the identities (\ref{ns-np1}), (\ref{ns-np2}) and (\ref{ns-np3}). Hence $(V, \diamond_r, *_r , [~,~,~]_r, \llbracket ~, ~, ~ \rrbracket_r)$ is an NS-Nambu-Poisson algebra. Finally, for this structure, it is easy to see that
    \begin{align*}
        u \odot v =~& u \diamond_r v + v \diamond_r u + u *_r v = u \cdot_r v,\\
        \{ \! \! \{ u, v, w \} \! \! \{ =~& [u, v, w]_r + [v, w, u]_r + [w, u, v]_r + \llbracket u, v, w \rrbracket_r = \{ u, v, w \}_r.
    \end{align*}
    Hence the corresponding subadjacent structure coincides with $(V, \cdot_r, \{ ~,~,~ \}_r)$.
\end{proof}

In the previous theorem, we have seen that a twisted $\mathcal{O}$-operator induces an NS-Nambu-Poisson algebra structure. We will now show that any NS-Nambu-Poisson algebra always arises in this way. Explicitly, we have the following result.

\begin{pro}
    Let $(A, \diamond, *, [~,~, ~], \llbracket ~, ~, ~ \rrbracket)$ be any NS-Nambu-Poisson algebra. Then the identity map $\mathrm{Id} : A \rightarrow A$ is a $(\varphi_*, \psi_{\llbracket ~,~, ~ \rrbracket})$-twisted $\mathcal{O}$-operator on the subadjacent Nambu-Poisson algebra $(A, \odot, \{ \! \! \{ ~, ~, ~ \} \! \! \})$ with respect to the representation $(A; \mu_\diamond, \rho_{[~,~,~]})$. Moreover, the induced NS-Nambu-Poisson algebra structure on $A$ coincides with the given one.
\end{pro}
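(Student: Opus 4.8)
The plan is to reduce everything to Theorem~\ref{ns-np-thm-2} together with the bookkeeping in Theorem~\ref{twisted-NS}. By Theorem~\ref{ns-np-thm-2}(i), the triple $(A, \odot, \{ \! \! \{ ~, ~, ~ \} \! \! \})$ is a Nambu-Poisson algebra, and by part~(ii) of the same theorem, $(A; \mu_\diamond, \rho_{[~,~,~]})$ is a representation of it while $(\varphi_*, \psi_{\llbracket ~, ~, ~ \rrbracket})$ is a Nambu-Poisson $2$-cocycle of this Nambu-Poisson algebra with coefficients in that representation. Hence the phrase ``$(\varphi_*, \psi_{\llbracket ~, ~, ~ \rrbracket})$-twisted $\mathcal{O}$-operator on $(A, \odot, \{ \! \! \{ ~, ~, ~ \} \! \! \})$ with respect to $(A; \mu_\diamond, \rho_{[~,~,~]})$'' is meaningful, and it only remains to verify the two defining identities (\ref{first-twisted}) and (\ref{second-twisted}) for $r = \mathrm{Id}$.

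Substituting $r = \mathrm{Id}$, $\mu = \mu_\diamond$, $\rho = \rho_{[~,~,~]}$, $\varphi = \varphi_*$ and $\psi = \psi_{\llbracket ~, ~, ~ \rrbracket}$, the right-hand side of (\ref{first-twisted}) becomes $\mu_\diamond(a) b + \mu_\diamond(b) a + \varphi_*(a, b) = a \diamond b + b \diamond a + a * b$, which equals $a \odot b$ by the very definition (\ref{operation-odot}); thus (\ref{first-twisted}) holds. Likewise, the right-hand side of (\ref{second-twisted}) becomes $\rho_{[~,~,~]}(a, b) c + \rho_{[~,~,~]}(b, c) a + \rho_{[~,~,~]}(c, a) b + \psi_{\llbracket ~, ~, ~ \rrbracket}(a, b, c) = [a, b, c] + [b, c, a] + [c, a, b] + \llbracket a, b, c \rrbracket$, which equals $\{ \! \! \{ a, b, c \} \! \! \}$ by the definition (\ref{total-3lie}); thus (\ref{second-twisted}) holds as well. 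This proves that $\mathrm{Id} : A \to A$ is a $(\varphi_*, \psi_{\llbracket ~, ~, ~ \rrbracket})$-twisted $\mathcal{O}$-operator.

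For the last assertion, I would feed this twisted $\mathcal{O}$-operator into Theorem~\ref{twisted-NS}. The induced NS-Nambu-Poisson operations on $A$ are $u \diamond_{\mathrm{Id}} v = \mu_\diamond(\mathrm{Id}(u)) v = u \diamond v$, $u *_{\mathrm{Id}} v = \varphi_*(\mathrm{Id}(u), \mathrm{Id}(v)) = u * v$, $[u, v, w]_{\mathrm{Id}} = \rho_{[~,~,~]}(\mathrm{Id}(u), \mathrm{Id}(v)) w = [u, v, w]$ and $\llbracket u, v, w \rrbracket_{\mathrm{Id}} = \psi_{\llbracket ~, ~, ~ \rrbracket}(\mathrm{Id}(u), \mathrm{Id}(v), \mathrm{Id}(w)) = \llbracket u, v, w \rrbracket$, so the induced NS-Nambu-Poisson structure is exactly the given one $(A, \diamond, *, [~,~,~], \llbracket ~, ~, ~ \rrbracket)$.

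There is no genuine obstacle here: once Theorem~\ref{ns-np-thm-2} guarantees that $(\varphi_*, \psi_{\llbracket ~, ~, ~ \rrbracket})$ is a legitimate Nambu-Poisson $2$-cocycle for the subadjacent data, both twisted $\mathcal{O}$-operator equations for $r = \mathrm{Id}$ are literally the definitions of $\odot$ and $\{ \! \! \{ ~, ~, ~ \} \! \! \}$, and the coincidence of the induced structure follows from $\mathrm{Id}(u) = u$. Accordingly, I expect the final write-up to occupy only a few lines.
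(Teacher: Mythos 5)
Your proposal is correct and follows essentially the same route as the paper: both verify the two defining identities for $r=\mathrm{Id}$ by observing that their right-hand sides are literally the definitions of $\odot$ and $\{ \! \! \{ ~, ~, ~ \} \! \! \}$, and both then read off the induced operations from Theorem \ref{twisted-NS} to see that they coincide with the given ones. Your explicit appeal to Theorem \ref{ns-np-thm-2} to justify that $(\varphi_*, \psi_{\llbracket ~, ~, ~ \rrbracket})$ is a legitimate $2$-cocycle for the subadjacent data is a point the paper leaves implicit, but it is the same argument.
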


\begin{proof}
    For any $a, b, c \in A$,
    \begin{align*}
        \mathrm{Id} (a) \odot \mathrm{Id} (b)  =~& a \odot b = a \diamond b + b  \diamond a + a * b\\
        =~& \mathrm{Id} \big( \mu_\diamond (\mathrm{Id}(a)) b + \mu_\diamond (\mathrm{Id}(b)) a + \varphi_* (\mathrm{Id}(a), \mathrm{Id}(b))  \big),
    \end{align*}
    and similarly, 
    \begin{align*}
        &\{ \! \! \{ \mathrm{Id} (a), \mathrm{Id}(b), \mathrm{Id}(c) \} \! \! \} = \{ \! \! \{ a, b, c \} \! \! \} \\
        &= \mathrm{Id} \big( \rho_{[~,~,~]} ( \mathrm{Id} (a), \mathrm{Id} (b)   )c +  \rho_{[~,~,~]} ( \mathrm{Id} (b), \mathrm{Id} (c)   )a + \rho_{[~,~,~]} ( \mathrm{Id} (c), \mathrm{Id} (a)   )b + \psi_{\llbracket ~, ~, ~ \rrbracket} (    \mathrm{Id} (a), \mathrm{Id}(b), \mathrm{Id} (c) )   \big).
    \end{align*}
    This shows that the identity map $\mathrm{Id}: A \rightarrow A$ is a $(\varphi_*, \psi_{\llbracket ~, ~, ~ \rrbracket})$-twisted $\mathcal{O}$-operator on the Nambu-Poisson algebra $(A, \odot, \{ \! \! \{ ~, ~, ~ \} \! \! \})$ with respect to the representation $(A; \mu_\diamond, \rho_{[~,~,~]})$. If $(A, \diamond_\mathrm{Id}, *_\mathrm{Id}, [~,~,~]_\mathrm{Id}, \llbracket ~, ~, ~ \rrbracket_\mathrm{Id})$ is the induced NS-Nambu-Poisson structure on $A$ (by Theorem \ref{twisted-NS}) then
    \begin{align*}
        &a \diamond_\mathrm{Id} b = \mu_\diamond (\mathrm{Id}(a)) b = a \diamond b, \quad a *_\mathrm{Id} b = \varphi_* (\mathrm{Id}(a), \mathrm{Id} (b) ) = a * b \\
        \text{ and} & \text{ similarly,~}~ [a, b, c]_\mathrm{Id} = [a, b, c] \text{ and } \llbracket a, b, c \rrbracket_\mathrm{Id} = \llbracket a, b, c \rrbracket, \text{ for } a, b, c \in A.
    \end{align*}
    This proves the last part.
\end{proof}

By applying the above result to the NS-Nambu-Poisson algebra obtained from a Nijenhuis operator (cf. Theorem \ref{thm-nij-to-ns} and Proposition \ref{nij-2co}), we get the following.

\begin{pro}
    Let $(A, ~ \! \cdot ~ \! , \{ ~, ~, ~\})$ be a Nambu-Poisson algebra and $N : A \rightarrow A$ be a Nijenhuis operator on it. Consider the deformed Nambu-Poisson algebra $A_N = (A, \cdot_N , \{ ~, ~, ~ \}_N)$, its representation $(A; \mu_N, \rho_N)$ and the Nambu-Poisson $2$-cocycle $(\varphi_N, \psi_N)$ described in Proposition \ref{nij-2co}. With this, the identity map $\mathrm{Id}: A \rightarrow A$ is a $(\varphi_N, \psi_N)$-twisted $\mathcal{O}$-operator. Moreover, the induced NS-Nambu-Poisson algebra structure on $A$ coincides with the one given in Theorem \ref{thm-nij-to-ns}.
\end{pro}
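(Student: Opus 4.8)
The plan is to obtain this proposition as an immediate corollary of Theorem~\ref{thm-nij-to-ns}, Proposition~\ref{nij-2co} and the preceding proposition, the only work being to match up the various operations, representations and $2$-cocycles that appear.

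First I would recall that, by Theorem~\ref{thm-nij-to-ns}, the Nijenhuis operator $N$ equips $A$ with the NS-Nambu-Poisson algebra structure $(A, \diamond_N, *_N, [~,~,~]_N, \llbracket ~,~,~\rrbracket_N)$, whose subadjacent Nambu-Poisson algebra is precisely the deformed structure $A_N = (A, \cdot_N, \{ ~,~,~ \}_N)$. Applying the preceding proposition to this particular NS-Nambu-Poisson algebra, the identity map $\mathrm{Id} : A \to A$ is a $(\varphi_{*_N}, \psi_{\llbracket ~,~,~\rrbracket_N})$-twisted $\mathcal{O}$-operator on the subadjacent Nambu-Poisson algebra $(A, \cdot_N, \{ ~,~,~ \}_N)$ with respect to the representation $(A; \mu_{\diamond_N}, \rho_{[~,~,~]_N})$ furnished by Theorem~\ref{ns-np-thm-2}(ii), and moreover the NS-Nambu-Poisson structure induced by this twisted $\mathcal{O}$-operator (via Theorem~\ref{twisted-NS}) is the original one.

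The second step is purely bookkeeping: unwind the definitions and check that the data produced above coincide with the data named in the statement. For all $a, b, c \in A$ one has $\mu_{\diamond_N}(a) b = a \diamond_N b = N(a) \cdot b = \mu_N(a) b$, so $\mu_{\diamond_N} = \mu_N$; similarly $\rho_{[~,~,~]_N}(a, b) c = [a, b, c]_N = \{ N(a), N(b), c \} = \rho_N(a, b) c$, so $\rho_{[~,~,~]_N} = \rho_N$. For the $2$-cocycle, $\varphi_{*_N}(a, b) = a *_N b = -N(a \cdot b) = \varphi_N(a, b)$ and $\psi_{\llbracket ~,~,~\rrbracket_N}(a, b, c) = \llbracket a, b, c \rrbracket_N = \psi_N(a, b, c)$, directly from the formulas in Theorem~\ref{thm-nij-to-ns} and Proposition~\ref{nij-2co}. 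Substituting these identifications, the conclusion of the preceding proposition becomes exactly the assertion that $\mathrm{Id}$ is a $(\varphi_N, \psi_N)$-twisted $\mathcal{O}$-operator on $A_N$ with respect to $(A; \mu_N, \rho_N)$, and the induced NS-Nambu-Poisson structure is $(A, \diamond_N, *_N, [~,~,~]_N, \llbracket ~,~,~\rrbracket_N)$, i.e. the one of Theorem~\ref{thm-nij-to-ns}.

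There is no genuine obstacle in this argument: the whole content has already been established in the earlier results, and the only point requiring a moment of care is confirming that the representation $(A; \mu_N, \rho_N)$ singled out in Proposition~\ref{nij-2co} is literally the representation of $A_N$ attached to the NS-Nambu-Poisson algebra by Theorem~\ref{ns-np-thm-2}(ii) — which is transparent from the formulas displayed above. Hence the proposition follows.
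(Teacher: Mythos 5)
Your proposal is correct and follows exactly the route the paper intends: the paper states this proposition as an immediate consequence of the preceding proposition applied to the NS-Nambu-Poisson algebra of Theorem \ref{thm-nij-to-ns}, and your identifications $\mu_{\diamond_N}=\mu_N$, $\rho_{[~,~,~]_N}=\rho_N$, $\varphi_{*_N}=\varphi_N$, $\psi_{\llbracket ~,~,~\rrbracket_N}=\psi_N$ are precisely the bookkeeping that makes this work. Nothing is missing.
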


In Theorem \ref{ns-np-thm-2}, we have seen that an NS-Nambu-Poisson algebra gives rise to its subadjacent Nambu-Poisson algebra. The next result finds a necessary and sufficient condition under which an arbitrary Nambu-Poisson algebra arises as the subadjacent structure of an NS-Nambu-Poisson algebra.

\begin{pro}
    Let $(A, ~ \! \cdot ~ \!, \{ ~, ~, ~ \})$ be a Nambu-Poisson algebra. Then it is the subadjacent Nambu-Poisson algebra of an NS-Nambu-Poisson structure on $A$ if and only if there exists an invertible $(\varphi,\psi)$-twisted $\mathcal{O}$-operator $r: V \rightarrow A$ on the Nambu-Poisson algebra $(A, ~ \! \cdot ~ \!, \{ ~, ~, ~ \})$ with respect to a representation $(V; \mu, \rho)$ and a Nambu-Poisson $2$-cocycle $(\varphi, \psi)$. In this case, the NS-Nambu-Poisson algebra structure on $A$ is given by 
    \begin{align*}
        &a \diamond b = r ( \mu (a) r^{-1} (b)), \quad a * b = r (\varphi (a, b)), \quad [a, b, c ] = r (\rho (a, b) r^{-1} (c))\\
        &  \qquad \qquad \text{and } ~ \llbracket a, b, c \rrbracket = r (\psi (a, b, c)), \text{ for } a, b, c \in A.
    \end{align*}
\end{pro}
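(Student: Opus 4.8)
The plan is to obtain both implications as formal consequences of Theorem~\ref{twisted-NS}, Proposition~\ref{prop-twist-ns}, Theorem~\ref{ns-np-thm-2} and the earlier proposition asserting that the identity map on an NS-Nambu-Poisson algebra is a twisted $\mathcal{O}$-operator on its subadjacent Nambu-Poisson algebra; thus the proof is essentially a bookkeeping argument requiring no fresh computation.

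For the ``only if'' direction, suppose $(A, ~\!\cdot~\!, \{~,~,~\})$ is the subadjacent Nambu-Poisson algebra of an NS-Nambu-Poisson structure $(A, \diamond, *, [~,~,~], \llbracket ~,~,~\rrbracket)$ on $A$, so that $a \cdot b = a \odot b$ and $\{a,b,c\} = \{ \! \! \{ a,b,c \} \! \! \}$ for all $a,b,c \in A$. By Theorem~\ref{ns-np-thm-2} the triple $(A; \mu_\diamond, \rho_{[~,~,~]})$ is a representation of this Nambu-Poisson algebra and $(\varphi_*, \psi_{\llbracket ~,~,~\rrbracket})$ is a Nambu-Poisson $2$-cocycle of it; the aforementioned earlier proposition then states exactly that $\mathrm{Id} : A \rightarrow A$ is a $(\varphi_*, \psi_{\llbracket ~,~,~\rrbracket})$-twisted $\mathcal{O}$-operator with respect to $(A; \mu_\diamond, \rho_{[~,~,~]})$, and it is visibly invertible. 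Taking $V = A$, $\mu = \mu_\diamond$, $\rho = \rho_{[~,~,~]}$, $(\varphi,\psi) = (\varphi_*, \psi_{\llbracket ~,~,~\rrbracket})$ and $r = \mathrm{Id}$, the displayed formulas collapse to $a \diamond b = \mu_\diamond(a) b = a \diamond b$, $a * b = \varphi_*(a,b) = a * b$, and likewise for $[~,~,~]$ and $\llbracket ~,~,~\rrbracket$, so they reproduce the given NS-structure.

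For the ``if'' direction, let $r : V \rightarrow A$ be an invertible $(\varphi,\psi)$-twisted $\mathcal{O}$-operator with respect to a representation $(V;\mu,\rho)$. By Theorem~\ref{twisted-NS}, $V$ carries the NS-Nambu-Poisson structure $(V, \diamond_r, *_r, [~,~,~]_r, \llbracket ~,~,~\rrbracket_r)$ with subadjacent Nambu-Poisson algebra $(V, ~\!\cdot_r~\!, \{~,~,~\}_r)$, and by Proposition~\ref{prop-twist-ns} the (now bijective) map $r$ is an isomorphism of Nambu-Poisson algebras from $(V, ~\!\cdot_r~\!, \{~,~,~\}_r)$ onto $(A, ~\!\cdot~\!, \{~,~,~\})$. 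I would then transport the NS-Nambu-Poisson structure on $V$ to $A$ along $r$, setting $a \diamond b := r\big(r^{-1}(a) \diamond_r r^{-1}(b)\big)$, $a * b := r\big(r^{-1}(a) *_r r^{-1}(b)\big)$, $[a,b,c] := r\big([r^{-1}(a),r^{-1}(b),r^{-1}(c)]_r\big)$ and $\llbracket a,b,c \rrbracket := r\big(\llbracket r^{-1}(a),r^{-1}(b),r^{-1}(c) \rrbracket_r\big)$ for $a,b,c \in A$. Since the defining axioms of an NS-Nambu-Poisson algebra are multilinear identities stable under linear isomorphisms, $(A, \diamond, *, [~,~,~], \llbracket ~,~,~\rrbracket)$ is an NS-Nambu-Poisson algebra, and substituting the definitions of $\diamond_r, *_r, [~,~,~]_r, \llbracket ~,~,~\rrbracket_r$ from Theorem~\ref{twisted-NS} turns these formulas into exactly the ones in the statement, e.g. $a \diamond b = r(\mu(a) r^{-1}(b))$ and $a * b = r(\varphi(a,b))$.

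Finally I would check that the subadjacent Nambu-Poisson algebra of the transported structure is $(A, ~\!\cdot~\!, \{~,~,~\})$ itself: the operations $\odot$ and $\{ \! \! \{ ~,~,~ \} \! \! \}$ built on $A$ are the $r$-transports of the corresponding subadjacent operations on $V$, which by Theorem~\ref{twisted-NS} are $\cdot_r$ and $\{~,~,~\}_r$, and $r$ already carries $\cdot_r, \{~,~,~\}_r$ to $\cdot, \{~,~,~\}$. I do not expect a genuine obstacle, since all the substantive verifications --- the NS-Nambu-Poisson axioms and their relation to the twisted $\mathcal{O}$-operator conditions (\ref{first-twisted}) and (\ref{second-twisted}), together with the subadjacency statements --- are already contained in Theorems~\ref{twisted-NS} and \ref{ns-np-thm-2}; the one point needing care is keeping straight that the subadjacent operations on $V$ coincide with $\cdot_r$ and $\{~,~,~\}_r$, which is what makes the transported subadjacent structure collapse back to the original Nambu-Poisson algebra on $A$.
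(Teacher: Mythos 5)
Your proposal is correct and follows essentially the same route as the paper: the forward direction takes $r=\mathrm{Id}$ with the representation $(A;\mu_\diamond,\rho_{[~,~,~]})$ and the $2$-cocycle $(\varphi_*,\psi_{\llbracket ~,~,~\rrbracket})$ from Theorem \ref{ns-np-thm-2}, and the converse transports the NS-structure of Theorem \ref{twisted-NS} from $V$ to $A$ along the invertible $r$ and checks that the subadjacent operations collapse to $\cdot$ and $\{~,~,~\}$. The only cosmetic difference is that you cite the earlier proposition on $\mathrm{Id}$ being a twisted $\mathcal{O}$-operator and the homomorphism property of Proposition \ref{prop-twist-ns} where the paper rewrites the short verifications explicitly.
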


\begin{proof}
    Let $(A, \diamond, *, [~,~,~], \llbracket ~, ~, ~ \rrbracket)$ be an NS-Nambu-Poisson structure on $A$ whose subadjacent Nambu-Poisson algebra structure is given by $(A, ~ \! \cdot ~ \! , \{ ~, ~, ~ \})$. Then by Theorem \ref{ns-np-thm-2}, the triple $(A; \mu_\diamond, \rho_{[~,~,~]})$ is a representation of the Nambu-Poisson algebra $(A, ~ \! \cdot ~ \!, \{ ~, ~, ~ \})$, and $(\varphi_*, \psi_{\llbracket ~, ~, ~ \rrbracket})$ is a Nambu-Poisson $2$-cocycle. Additionally, we see that
    \begin{align*}
        \mathrm{Id} (a) \cdot \mathrm{Id} (b) = a \cdot b = a \diamond b + b \diamond a + a * b =  \mathrm{Id} \big( \mu_\diamond (\mathrm{Id} (a)) b  + \mu_\diamond (\mathrm{Id} (b)) a + \varphi_* (\mathrm{Id}(a), \mathrm{Id}(b) ) \big) 
    \end{align*}
    and 
    \begin{align*}
       & \{  \mathrm{Id} (a),  \mathrm{Id} (b),  \mathrm{Id} (c) \} = \{ a, b, c \} = [a, b, c] + [b, c, a] + [c, a, b] + \llbracket a, b, c \rrbracket \\
       &=  \mathrm{Id}  \Big(   \rho_{[~,~,~]} (  \mathrm{Id} (a),  \mathrm{Id} (b) ) c +  \rho_{[~,~,~]} ( \mathrm{Id} (b),  \mathrm{Id} (c) ) a +  \rho_{[~,~,~]} (\mathrm{Id} (c),  \mathrm{Id} (a) ) b + \psi_{\llbracket ~,~,~ \rrbracket} ( \mathrm{Id}(a), \mathrm{Id}(b), \mathrm{Id}(c)) \Big),
    \end{align*}
    for all $a, b, c \in A$. Hence $\mathrm{Id}: A \rightarrow A$ is an invertible $(\varphi_*, \psi_{\llbracket ~, ~, ~ \rrbracket})$-twisted $\mathcal{O}$-operator on $(A, ~ \! \cdot ~ \!, \{ ~, ~, ~ \})$ with respect to the representation $(A; \mu_\diamond, \rho_{[~,~,~]}).$

    Conversely, let $r: V \rightarrow A$ be an invertible $(\varphi, \psi)$-twisted $\mathcal{O}$-operator on $(A, ~ \! \cdot ~ \!, \{ ~, ~, ~ \})$ with respect to a representation $(V; \mu, \rho)$ and a Nambu-Poisson $2$-cocycle $(\varphi, \psi)$. Then by Theorem \ref{twisted-NS}, $V$ inherits an NS-Nambu-Poisson algebra structure $(V, \diamond_r, *_r, [~,~,~]_r, \llbracket ~, ~, ~ \rrbracket_r)$, where the operations are explicitly given there. Since $r$ is invertible, we can transfer this NS-Nambu-Poisson algebra structure to the vector space $A$. Explicitly, for any $a, b, c \in A$, we have
    \begin{align*}
        &a \diamond b = r ( r^{-1} (a) \diamond_r r^{-1}(b)) = r (\mu(a) r^{-1}(b)), \quad a * b = r ( r^{-1}(a) *_r r^{-1}(b) ) = r (\varphi (a, b)),\\
        & \quad [a, b, c ] = r ( [r^{-1}(a), r^{-1}(b), r^{-1}(c)]_r) = r (\rho (a, b) r^{-1}(c)),\\
        & \quad \llbracket a, b, c \rrbracket = r ( \llbracket r^{-1}(a), r^{-1}(b), r^{-1}(c) \rrbracket_r ) = r (\psi (a, b, c)).
    \end{align*}
    If $(A, \odot, \{ \! \! \{ ~, ~, ~\} \! \! \})$ is the corresponding subadjacent Nambu-Poisson algebra structure then 
    \begin{align*}
        a \odot b = a \diamond b + b \diamond a + a * b =~& r \big(  \mu(a) r^{-1} (b) + \mu (b) r^{-1} (a) + \varphi (a, b)    \big) \\
        =~&  r (r^{-1}(a)) \cdot r (r^{-1}(b)) = a \cdot b
    \end{align*}
    and similarly, $\{ \! \! \{ a, b, c \} \! \! \} = \{ a, b, c \}$, for all $a, b, c \in A$. Hence $(A, \diamond, *, [~,~,~], \llbracket ~, ~, ~\rrbracket)$ is a desired NS-Nambu-Poisson algebra structure on $A$.
\end{proof}

The notion of a Reynolds operator is a particular case of a twisted Rota-Baxter operator. Hence, Reynolds operators are closely related to NS-algebras \cite{uchino,das,Hou-Sheng-NSLie}. We will now consider the Reynolds operator on a Nambu-Poisson algebra and find its relation to NS-Nambu-Poisson algebras.

\begin{defi}
    Let $(A, ~ \! \cdot ~ \!, \{ ~, ~, ~\})$ be a Nambu-Poisson algebra. A linear map $\mathcal{R} : A \rightarrow A$ is said to be a {\bf Reynolds operator} on $A$ if for all $a, b, c \in A$,
    \begin{align*}
        \mathcal{R}(a) \cdot \mathcal{R}(b) =~& \mathcal{R} \big( \mathcal{R}(a) \cdot b + a \cdot \mathcal{R}(b) - \mathcal{R}(a) \cdot \mathcal{R}(b) \big),\\
        \{ \mathcal{R}(a), \mathcal{R}(b), \mathcal{R}(c) \} =~& \mathcal{R} \big(  \{ \mathcal{R}(a), \mathcal{R}(b), c \} + \{ \mathcal{R}(a), b, \mathcal{R}(c) \} + \{ a, \mathcal{R}(b), \mathcal{R}(c) \} - \{ \mathcal{R}(a), \mathcal{R}(b), \mathcal{R}(c) \}   \big).
    \end{align*}
\end{defi}

It follows that a linear map $\mathcal{R}: A \rightarrow A$ is a Reynolds operator on the Nambu-Poisson algebra $(A, ~ \! \cdot ~ \!, \{ ~, ~, ~\})$ if and only if $\mathcal{R}$ is a Reynolds operator on both the commutative associative algebra $(A, ~\! \cdot ~ \!)$ and on the $3$-Lie algebra $(A, \{ ~, ~, ~\})$.

\begin{rmk}
    Let $(A, ~ \! \cdot ~ \!, \{ ~, ~, ~\})$ be a Nambu-Poisson algebra and consider its adjoint representation $(A; \mu_\mathrm{ad}, \rho_\mathrm{ad})$. Then it is easy to see that the pair $(\varphi_{-}, \psi_{-})$ is a Nambu-Poisson $2$-cocycle of $A$ with coefficients in the adjoint representation, where
    \begin{align*}
        \varphi_{- }(a, b) = - a \cdot b ~~~~ \text{ and } ~~~~ \psi_{-} (a, b, c) = - \{ a, b, c \}, \text{ for } a, b, c \in A. 
    \end{align*}
    It follows that a Reynolds operator on a Nambu-Poisson algebra $(A, ~\! \cdot ~ \! , \{ ~, ~, ~\})$ is nothing but a $(\varphi_{-}, \psi_{-})$-twisted Rota-Baxter operator on A (cf. Remark \ref{remark-tw-rota}).
\end{rmk}

Since a Reynolds operator can be realized as a twisted Rota-Baxter operator, we have the following result (as a consequence of Proposition \ref{prop-twist-ns} and Theorem \ref{twisted-NS}).

\begin{pro}
    Let $(A, ~ \! \cdot ~ \!, \{ ~, ~, ~\})$ be a Nambu-Poisson algebra and $\mathcal{R}: A \rightarrow A$ be a Reynolds operator on it.
    \begin{itemize}
        \item[(i)] Then $(A, \cdot_\mathcal{R}, \{ ~, ~, ~ \}_\mathcal{R})$ is a Nambu-Poisson algebra, where for any $a, b, c \in A$,
        \begin{align*}
            a \cdot_\mathcal{R} b :=~& \mathcal{R}(a) \cdot b + a \cdot \mathcal{R}(b) - \mathcal{R}(a) \cdot \mathcal{R}(b),\\
            \{ a, b, c \}_\mathcal{R} :=~& \{ \mathcal{R}(a), \mathcal{R}(b), c \} + \{ \mathcal{R}(a), b, \mathcal{R}(c) \} + \{ a, \mathcal{R}(b), \mathcal{R}(c) \} - \{ \mathcal{R}(a), \mathcal{R}(b), \mathcal{R}(c) \}.
        \end{align*}
        \item[(ii)] The quintuple $(A, \diamond_\mathcal{R}, *_\mathcal{R}, [~,~,~]_\mathcal{R}, \llbracket ~, ~, ~\rrbracket_\mathcal{R})$ is an NS-Nambu-Poisson algebra, where
        \begin{align*}
            &a \diamond_\mathcal{R} b := \mathcal{R}(a) \cdot b, \quad a *_\mathcal{R}b:= - \mathcal{R}(a) \cdot \mathcal{R} (b), \quad [a, b, c ]_\mathcal{R} := \{ \mathcal{R}(a) , \mathcal{R}(b), c \} \\
            & \qquad \text{ and } \llbracket a, b, c \rrbracket_\mathcal{R} := - \{ \mathcal{R}(a) , \mathcal{R}(b), \mathcal{R}(c) \},
        \end{align*}
        for $a, b, c \in A$. The corresponding subadjacent Nambu-Poisson algebra is precisely the one $(A, \cdot_\mathcal{R}, \{ ~, ~, ~ \}_\mathcal{R})$ given above.
    \end{itemize}
\end{pro}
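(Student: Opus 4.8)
The plan is to derive both parts from the observation---recorded in the remark immediately preceding this proposition---that a Reynolds operator $\mathcal{R}$ on $(A, ~\!\cdot~\!, \{~,~,~\})$ is exactly a $(\varphi_-, \psi_-)$-twisted Rota--Baxter operator, i.e. a $(\varphi_-, \psi_-)$-twisted $\mathcal{O}$-operator on $A$ with respect to the adjoint representation $(A; \mu_\mathrm{ad}, \rho_\mathrm{ad})$, where $\varphi_-(a,b) = - a\cdot b$ and $\psi_-(a,b,c) = -\{a,b,c\}$ (cf. Remark \ref{remark-tw-rota}). Once this identification is in hand, part (i) is a direct instance of Proposition \ref{prop-twist-ns} and part (ii) a direct instance of Theorem \ref{twisted-NS}, both applied with $V = A$, $r = \mathcal{R}$, $\mu = \mu_\mathrm{ad}$, $\rho = \rho_\mathrm{ad}$, $\varphi = \varphi_-$ and $\psi = \psi_-$. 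The only remaining work is to check that the abstract operations produced by those results reduce to the explicit formulas in the statement.

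For (i), I would substitute the adjoint representation into the operations $\cdot_r$ and $\{~,~,~\}_r$ of Proposition \ref{prop-twist-ns}. This gives $a \cdot_r b = \mathcal{R}(a)\cdot b + \mathcal{R}(b)\cdot a - \mathcal{R}(a)\cdot\mathcal{R}(b)$, which equals $a \cdot_\mathcal{R} b$ by commutativity of $\cdot$. Likewise $\{a,b,c\}_r = \{\mathcal{R}(a),\mathcal{R}(b),c\} + \{\mathcal{R}(b),\mathcal{R}(c),a\} + \{\mathcal{R}(c),\mathcal{R}(a),b\} - \{\mathcal{R}(a),\mathcal{R}(b),\mathcal{R}(c)\}$; since a cyclic permutation of the three arguments of the $3$-Lie bracket preserves the sign, the middle two summands can be rewritten as $\{a,\mathcal{R}(b),\mathcal{R}(c)\}$ and $\{\mathcal{R}(a),b,\mathcal{R}(c)\}$, so the whole expression is $\{a,b,c\}_\mathcal{R}$. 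Hence $(A, \cdot_\mathcal{R}, \{~,~,~\}_\mathcal{R})$ is precisely the Nambu-Poisson algebra induced by $r = \mathcal{R}$, and Proposition \ref{prop-twist-ns} shows it is a Nambu-Poisson algebra.

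For (ii), I would similarly substitute the adjoint representation into the operations $\diamond_r, *_r, [~,~,~]_r, \llbracket~,~,~\rrbracket_r$ of Theorem \ref{twisted-NS}; reading off the definitions gives $a\diamond_r b = \mathcal{R}(a)\cdot b$, $a *_r b = -\mathcal{R}(a)\cdot\mathcal{R}(b)$, $[a,b,c]_r = \{\mathcal{R}(a),\mathcal{R}(b),c\}$ and $\llbracket a,b,c\rrbracket_r = -\{\mathcal{R}(a),\mathcal{R}(b),\mathcal{R}(c)\}$, which are exactly $\diamond_\mathcal{R}, *_\mathcal{R}, [~,~,~]_\mathcal{R}, \llbracket~,~,~\rrbracket_\mathcal{R}$. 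Theorem \ref{twisted-NS} then yields that this quintuple is an NS-Nambu-Poisson algebra whose subadjacent Nambu-Poisson algebra is $(A, \cdot_r, \{~,~,~\}_r) = (A, \cdot_\mathcal{R}, \{~,~,~\}_\mathcal{R})$ by part (i).

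The only step requiring genuine care---and the most likely source of a slip---is confirming that the two Reynolds identities for $\mathcal{R}$ coincide with the twisted $\mathcal{O}$-operator equations (\ref{first-twisted})--(\ref{second-twisted}) for $(\varphi_-, \psi_-)$ and the adjoint representation; this reduces to the same commutativity and cyclic-skew-symmetry manipulations used above. Granting the preceding remark, which asserts exactly this equivalence, the proposition follows with no further computation.
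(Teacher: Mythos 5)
Your proposal is correct and follows exactly the route the paper intends: the paper states this proposition as a consequence of Proposition \ref{prop-twist-ns} and Theorem \ref{twisted-NS} via the identification of a Reynolds operator with a $(\varphi_-,\psi_-)$-twisted Rota--Baxter operator, and you have supplied precisely the omitted verifications (commutativity of $\cdot$ and the even cyclic permutation of the skew-symmetric $3$-bracket) needed to match the abstract formulas with the explicit ones.
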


\noindent {\bf Further discussions.}
In \cite{Flato}, Flato, Gerstenhaber and Voronov developed the cohomology theory of a Poisson algebra using an innovative bicomplex that combines the Harrison cochain complex of the underlying commutative associative algebra and the Chevalley-Eilenberg complex of the Lie algebra. It has been shown that the corresponding second cohomology groups classify infinitesimal deformations of Poisson algebras. Later, Bao and Ye  \cite{Bao1, Bao2} observed that the Flato-Gerstenhaber-Voronov cohomology of a Poisson algebra can be described by using classical Yoneda extensions or by derived functors. It is worth mentioning that Lichnerowicz \cite{Lichnerowicz} considered a different cohomology theory for Poisson algebras before the paper by Flato, Gerstenhaber and Voronov \cite{Flato} appeared. Unlike the Flato-Gerstenhaber-Voronov cohomology (which governs the simultaneous deformations of both the commutative associative product and the Lie bracket), the Lichnerowicz cohomology only captures the deformations of the Lie bracket. In \cite{Hanene}, the author proposed a cohomology theory for Nambu-Poisson algebras generalizing the Lichnerowicz cohomology of Poisson algebras. Thus, their cohomology only captures the deformations of the underlying $3$-Lie bracket. However, the more general cohomology theory of Nambu-Poisson algebras (analogous to the Flato-Gerstenhaber-Voronov cohomology of Poisson algebras) is not yet considered. In a subsequent work, we aim to construct a suitable bicomplex combining the Harrison cochain complex of the underlying commutative associative algebra and the cochain complex of the $3$-Lie algebra (as described in \cite{Takhtajan}). The resulting cohomology should also generalizes the second cohomology group of a Nambu-Poisson algebra considered in the present paper.

Like twisted Rota-Baxter operators are related to NS-algebras, Rota-Baxter operators of weight $1$ are related to post-algebras \cite{li-guo}. They are also intrinsically associated with corresponding bialgebra structures. In \cite{poisson-bi}, the authors considered Rota-Baxter operators of weight $1$ on Poisson algebras, which led them to introduce post-Poisson algebras and Poisson bialgebras. In a forthcoming paper, we aim to extend their study to the context of Nambu-Poisson algebras. Explicitly, we develop the notions of post-Nambu-Poisson algebras, matched pairs and Manin triples of Nambu-Poisson algebras, and their relations with Nambu-Poisson bialgebras. We also aim to investigate the connections between the solutions of the Nambu-Poisson Yang-Baxter equation considered in \cite{HarrathiSendi} and Nambu-Poisson bialgebras.

\end{document}